\documentclass[preprint,12pt,numbers]{elsarticle}
\usepackage{amsmath,amssymb,amsfonts,amsthm,mathrsfs}
\usepackage{graphicx,booktabs,array,bm,enumitem}
\usepackage[english]{babel}
\usepackage[colorlinks,linkcolor=blue,citecolor=blue,urlcolor=blue]{hyperref}
\biboptions{sort&compress}
\newtheorem{theorem}{Theorem}[section]
\newtheorem{lemma}[theorem]{Lemma}
\newtheorem{proposition}[theorem]{Proposition}
\newtheorem{corollary}[theorem]{Corollary}
\theoremstyle{definition}
\newtheorem{definition}[theorem]{Definition}
\newtheorem{assumption}[theorem]{Assumption}
\theoremstyle{remark}
\newtheorem{remark}[theorem]{Remark}
\numberwithin{equation}{section}
\allowdisplaybreaks[2]
\AtBeginDocument{\lefthyphenmin=3\righthyphenmin=3}
\hypersetup{
 pdftitle={Global well-posedness of the energy-critical nonlinear Schrodinger equation on the three-dimensional torus with additive Levy noise},
 pdfauthor={Bin Liu, Fan Xu},
 pdfsubject={Research manuscript, 14 October 2026}
}
\journal{Research manuscript}

\begin{document}
\begin{frontmatter}
\title{Global well-posedness of the energy-critical nonlinear
Schr\"odinger equation on the three-dimensional torus
with additive L\'evy noise}
\author[a,b]{Bin Liu}
\ead{binliu@mail.hust.edu.cn}
\author[a,b]{Fan Xu}
\ead{2026510746@hust.edu.cn}
\affiliation[a]{organization={School of Mathematics and Statistics,
Huazhong University of Science and Technology},city={Wuhan},
postcode={430074},state={Hubei},country={P. R. China}}
\affiliation[b]{organization={Hubei Key Laboratory of Engineering
Modeling and Scientific Computing, Huazhong University of Science
and Technology},city={Wuhan},postcode={430074},state={Hubei},
country={P. R. China}}
\begin{abstract}
This paper establishes global well-posedness for the defocusing
energy-critical nonlinear Schr\"odinger equation on the
three-dimensional torus with a class of additive L\'evy noises.
For arbitrary deterministic initial data in the energy space,
we construct probabilistically strong solutions that are
pathwise unique in the critical solution class. Continuous
dependence on initial data, convergence of finite-activity
approximations, and a vanishing-noise limit hold on finite
time intervals. When the effective noise is nonzero,
quantitative bounds for mass exit times imply almost-sure
unboundedness of the mass and exclude free scattering.
The expected energy then admits a cubic lower bound in time.
The analysis couples pathwise critical perturbation theory with
energy estimates adapted to Poisson compensation and stopping-time
arguments based on the exact mass balance.
\end{abstract}
\begin{keyword}
Energy-critical nonlinear Schr\"odinger equation \sep
Three-dimensional torus \sep Additive L\'evy noise \sep
Infinite activity \sep Global well-posedness \sep Atomic spaces
\MSC[2020] 35Q55 \sep 35B45 \sep 35R60 \sep 60H15 \sep 60G51
\end{keyword}
\end{frontmatter}
\clearpage

\section{Introduction and main results}\label{sec:intro}

The deterministic defocusing quintic nonlinear Schr\"odinger
equation on the three-dimensional torus admits a global Cauchy
theory at the energy regularity $H^1$. An additive random forcing
changes both mass and energy. Additive L\'evy forcing leads
naturally to c\`adl\`ag solutions, while the nonlinear Duhamel
term remains continuous. These two
features require separate treatment: the energy estimate must
retain the cancellation provided by compensation, while the
critical perturbation theory must accept a discontinuous external
path.

In this paper we consider
\begin{equation}\label{eq:snls}
 \begin{split}
 du(t)&=i\bigl(\Delta u(t)-|u(t)|^4u(t)\bigr)\,dt
                 +\int_{Z_0}Gz\,\widetilde N(dt,dz),\\
 u(0)&=u_0\in H^1(\mathbb T^3),
 \qquad \mathbb T^3=(\mathbb R/2\pi\mathbb Z)^3.
 \end{split}
\end{equation}
Here $\widetilde N$ is a compensated Poisson random measure and
$G$ maps a finite-dimensional mark space into $H^3(\mathbb T^3)$.
The noise is additive and is interpreted as a square-integrable
martingale. The precise assumptions are given below. The
Hamiltonian drift is defocusing, and its exponent is critical for
the energy scaling of the corresponding equation on $\mathbb R^3$.

We construct a global solution on the prescribed stochastic
basis and prove pathwise uniqueness and convergence of the
small-jump approximations. The spatial regularity of the
noise allows the stochastic
convolution to be placed in the deterministic forcing norm by
a Sobolev estimate, leaving the nonlinear critical analysis in
the continuous remainder. The construction applies to arbitrary
deterministic $H^1$ initial data and noise intensities.


\subsection{Related work}
\label{subsec:related-work}

The deterministic theory provides both the reference flow and the
quantitative estimates used in this paper. The Fourier restriction
method of Bourgain~\cite{Bourgain1993} initiated the modern periodic
Cauchy theory. On compact manifolds, the Strichartz estimates of
Burq, G\'erard and Tzvetkov~\cite{BGT2004} reflect the role of the
underlying geometry in the dispersive theory. For the quintic
equation on $\mathbb T^3$, Herr, Tataru
and Tzvetkov~\cite{HTT2011} established the small-data energy-critical
theory using refined multilinear estimates and atomic function
spaces. Ionescu and Pausader~\cite{IP2012} then proved global
well-posedness for arbitrary defocusing $H^1$ data. Their argument
uses the Euclidean critical dynamics, for which the large-data
three-dimensional theorem is due to Colliander, Keel, Staffilani,
Takaoka and Tao~\cite{CKSTT2008}. The endpoint linear estimates of
Keel and Tao~\cite{KeelTao1998} and the expositions
\cite{Cazenave2003,Tao2006} supply general background for the
Euclidean local and perturbative theories.

Several subsequent developments clarify the periodic estimates.
The decoupling theorem of Bourgain and Demeter~\cite{BD2015} yields
the relevant periodic restriction estimates, and Killip and
Vi\c{s}an~\cite{KV2016} obtain scale-invariant Strichartz estimates
on rectangular tori. Strunk~\cite{Strunk2015} extends the
three-dimensional global defocusing theory to rectangular tori.
More recently, Kwak~\cite{Kwak2026} establishes global
well-posedness of the energy-critical defocusing equation in
general dimensions, with new conclusions for $d\ge5$. We work
on the standard three-dimensional torus and use the quantitative
three-dimensional theory. The $U^p$ and $V^p$ framework underlying
our spaces is developed in Hadac, Herr and Koch~\cite{HHK2009};
the corrections in~\cite{HHK2010} are relevant to its endpoint
conventions. The deterministic global theorem supplies unit-interval
bounds, which we combine by finite concatenation in the
whole-interval atomic norm defined below.

For Wiener-driven equations, the mass- and energy-space theories
of de Bouard and Debussche~\cite{dBD1999,dBD2003} are foundational.
The rescaling method of Barbu, R\"ockner and
Zhang~\cite{BRZ2014,BRZ2016} treats linear multiplicative noise by
transforming the equation into a random dispersive equation.
Stochastic Strichartz estimates were developed on compact
manifolds by Brze\'{z}niak and Millet~\cite{BM2014}, and used for
nonlinear Stratonovich coefficients on Euclidean space by
Hornung~\cite{Hornung2018}. A complementary compactness approach
gives energy-space martingale solutions in the work of
Brze\'{z}niak, Hornung and Weis~\cite{BHW2019}, with
three-dimensional compact-manifold uniqueness studied
in~\cite{BHW2022}.

Critical Gaussian problems require additional control beyond an
energy estimate. Oh and Okamoto~\cite{OO2020} prove global
well-posedness for additive-noise mass-critical and energy-critical
equations on Euclidean space. Fan and Xu~\cite{FX2021} establish
the one-dimensional mass-critical multiplicative-noise theory at
$L^2$ regularity, and their Wong--Zakai theorem~\cite{FXWZ2021}
addresses stability under approximation of the driving signal.
For linear multiplicative Wiener noise, Zhang~\cite{Zhang2023}
develops global mass- and energy-critical theories; pathwise
Strichartz and local smoothing estimates are treated
in~\cite{Zhang2022}. Scattering for multiplicative Wiener
equations is studied by Herr, R\"ockner and Zhang~\cite{HRZ2019}.
Their results concern long-time behavior on Euclidean domains. Below the
energy regularity, Cheung, Li and Oh~\cite{CLO2021} adapt the
almost-conservation method to a different, energy-subcritical
additive Gaussian problem. Recent focusing results of Millet
and Roudenko~\cite{MR2026} concern quantitative existence times
and probabilities in critical and supercritical regimes; the
defocusing sign used here supplies a different coercive mechanism.

The closest periodic Gaussian precedents are
Cheung--Mosincat~\cite{CM2019} and
Li--Okamoto--Tao~\cite{LOT2025}. The former treat additive and
multiplicative forcing on tori at scaling-subcritical
regularities. The latter establish global well-posedness for the
additively forced defocusing quintic equation on $\mathbb T^3$
at $H^1$ regularity, with uniqueness in the stochastic
convolution plus an atomic solution space. We use the
deterministic multilinear estimates developed in their argument.
The $H^3$ regularity of our spatial profiles places the
discontinuous stochastic convolution in the forcing space
through a Sobolev estimate.

Jump-driven Schr\"odinger equations have an established
well-posedness theory. De Bouard and
Hausenblas~\cite{dBH2019} construct martingale solutions,
and de Bouard, Hausenblas and Ondrej\'at~\cite{dBHO2019}
study pathwise uniqueness and strong solutions.
Brze\'{z}niak, Hornung and Manna~\cite{BHM2020} treat
energy-subcritical conservative Marcus equations in a framework
covering compact manifolds and bounded domains. Their
construction uses tightness and a martingale formulation.
Brze\'{z}niak, Liu and Zhu~\cite{BLZ2021} prove stochastic
Strichartz estimates for Poisson convolutions and obtain strong
well-posedness for Euclidean equations with nonlinear Marcus
noise in the mass-subcritical range. Wang, Zhai and
Zhu~\cite{WZZ2023} treat additive jump noise in
$L^2(\mathbb R^d)$ in that range. Large deviations for
Marcus-driven equations are studied by Zhu, Liu and
Zhai~\cite{ZLZ2026}. The monographs
\cite{Applebaum2009,PeszatZabczyk2007} provide the underlying
Poisson integration and Hilbert-space stochastic evolution
theory. The present energy-critical problem additionally
requires a critical spacetime continuation argument.

The present authors, together with Yang and Zhang,
considered the energy-critical equation on $\mathbb R^3$ with
conservative linear Marcus noise in~\cite{LXYZ2026}. That work
constructs global strong solutions, proves finite-activity
approximation, and obtains scattering under finite total
temporal noise intensity. It allows bounded Lipschitz
spatial profiles and centered finite-second-moment jump measures
without the bounded-support and symmetry restrictions imposed
here. The present equation has
a different jump rule and a different spatial geometry.
A Marcus phase jump preserves mass and potential energy;
an additive jump $u\mapsto u+Gz$ does not. On the torus,
Euclidean global dispersive norms must be replaced by local
periodic atomic estimates.

We combine the periodic quintic critical theory for a
discontinuous forcing path with a cutoff-uniform mass--energy
estimate for additive jumps and a convergence argument that
retains the prescribed Poisson random measure.
The deterministic forced equation is constructed in a continuous
remainder space, while all jumps reside in the stochastic
convolution. Finite-activity solutions can be concatenated from
the deterministic global flow; the uniform estimates and the
forced stability theorem then remove the jump cutoff. This
also proves continuity in probability with respect to the
initial datum. Uniqueness holds in the affine critical solution
class of Definition~\ref{def:solution} under
Assumption~\ref{ass:noise}. We also prove a quantitative exit
bound and almost-sure unboundedness of the mass when the
effective noise is nonzero. Ordinary free scattering is then
impossible; Section~\ref{sec:dynamics} distinguishes this
stochastic obstruction from the deterministic periodic one.

\subsection{Stochastic framework and solution class}
\label{subsec:framework}

Spatial integrals use normalized Haar measure,
\[
 \int_{\mathbb T^3}h(x)\,dx
       =(2\pi)^{-3}\int_{[-\pi,\pi]^3}h(x)\,dx.
\]
Unless another domain is specified, $L^p=L^p(\mathbb T^3;\mathbb C)$
and $\|h\|_p=\|h\|_{L^p}$. For $1\leq p<\infty$ and $p=\infty$,
respectively, these norms are
\[
 \|h\|_p=\left(\int_{\mathbb T^3}|h(x)|^p\,dx\right)^{1/p},
 \qquad
 \|h\|_\infty=\mathop{\rm ess\,sup}_{x\in\mathbb T^3}|h(x)|.
\]
Spatial integrals with the domain or measure omitted use the same
normalized measure. We write $\overline h$ for complex conjugation,
$\nabla=(\partial_{x_1},\partial_{x_2},\partial_{x_3})$, and
$\Delta=\sum_{j=1}^3\partial_{x_j}^2$. The complex pairing
\[
 \langle h,k\rangle_{L^2}
       =\int_{\mathbb T^3}h(x)\overline{k(x)}\,dx
\]
is linear in its first entry; its subscript is omitted when the
underlying space is clear.
For a smooth periodic function $h$, write
\[
 \widehat h(\xi)=\int_{\mathbb T^3}e^{-ix\cdot\xi}h(x)\,dx,
 \qquad \langle\xi\rangle=(1+|\xi|^2)^{1/2},\quad \xi\in\mathbb Z^3.
\]
For a time-dependent function, $\widehat h(t,\xi)$ denotes the
Fourier coefficient in the spatial variable at the fixed time $t$.
For $s\in\mathbb R$, the space $H^s=H^s(\mathbb T^3;\mathbb C)$
is the completion of smooth periodic functions under the norm
\[
 \|h\|_{H^s}^2=\sum_{\xi\in\mathbb Z^3}
                 \langle\xi\rangle^{2s}|\widehat h(\xi)|^2.
\]
In particular, $H^0=L^2$. The space $H^{-1}$ is also identified
with the continuous conjugate-linear functionals on $H^1$ through
the extension of this pairing, with
\[
 \|h\|_{H^{-1}}=
 \sup_{\|\phi\|_{H^1}\leq1}|\langle h,\phi\rangle_{H^{-1},H^1}|.
\]
For $1\leq p\leq\infty$, $W^{1,p}=W^{1,p}(\mathbb T^3;\mathbb C)$
consists of the $L^p$ functions whose first weak derivatives belong
to $L^p$, with norm $\|h\|_{W^{1,p}}=\|h\|_p+\|\nabla h\|_p$.
Let $S(t)=e^{it\Delta}$ and $\mathcal N(w)=|w|^4w$; equivalently,
$\widehat{S(t)h}(\xi)=e^{-it|\xi|^2}\widehat h(\xi)$.

For a Banach space $\mathcal B$ and an interval $I$, $L^q(I;\mathcal B)$
is the space of strongly measurable functions, identified almost
everywhere, with norm
\[
 \|f\|_{L^q(I;\mathcal B)}=
 \begin{cases}
  \left(\displaystyle\int_I\|f(t)\|_{\mathcal B}^q\,dt\right)^{1/q},
       &1\leq q<\infty,\\[4pt]
  \mathop{\rm ess\,sup}_{t\in I}\|f(t)\|_{\mathcal B},&q=\infty.
 \end{cases}
\]
Strong measurability means almost-everywhere approximation by
$\mathcal B$-valued measurable simple functions. We use the notation
$L_t^qL_x^r(I\times\mathbb T^3)=L^q(I;L^r)$ and
$L_t^qH_x^s(I\times\mathbb T^3)=L^q(I;H^s)$. When the interval is
fixed, $L^qH^s$ and $L_t^qH_x^s$ abbreviate $L^q(I;H^s)$; in
particular, $\|f\|_{L^\infty H^3}$ is the essential supremum in
time of $\|f(t)\|_{H^3}$. The norm of $L^p(I\times\mathbb T^3)$
uses the product of Lebesgue time measure and normalized spatial
measure. The sequence space $\ell^p$ uses counting measure, so that
$\|(a_j)\|_{\ell^p}=(\sum_j|a_j|^p)^{1/p}$ for $p<\infty$ and
$\|(a_j)\|_{\ell^\infty}=\sup_j|a_j|$.

The space $C(I;\mathcal B)$ consists of continuous $\mathcal B$-valued
functions; on a compact interval its norm is
$\|f\|_{C(I;\mathcal B)}=\sup_{t\in I}\|f(t)\|_{\mathcal B}$.
We also write $C_tH_x^s(I)=C(I;H^s)$ and, when the interval is
fixed, $CH^s=C(I;H^s)$.
The space $D([0,T];\mathcal B)$ consists of right-continuous
functions with left limits, also called c\`adl\`ag functions.
For such a path, write
\[
 X(t-)=\lim_{s\uparrow t}X(s),\qquad
 \Delta_tX=X(t)-X(t-)\quad(t>0).
\]
Thus $\Delta_t$ denotes a temporal jump and $\Delta$ the spatial
Laplacian. Temporal $L^\infty$ norms use the essential supremum,
whereas uniform path distances use the supremum over the indicated
closed interval.

For Banach spaces $\mathcal B_1,\mathcal B_2$,
$\mathcal L(\mathcal B_1,\mathcal B_2)$ denotes the bounded linear
maps, with operator norm
$\|A\|_{\mathcal L(\mathcal B_1,\mathcal B_2)}
=\sup_{\|h\|_{\mathcal B_1}\leq1}\|Ah\|_{\mathcal B_2}$.
The identity operator is denoted by $\mathrm{Id}$, and
$\mathcal B_1\hookrightarrow\mathcal B_2$ denotes a continuous
embedding. Differentiation of maps on complex spaces is understood
over the real scalars. For a twice Fr\'echet differentiable map $\Phi$,
\begin{align*}
 D\Phi(w)[h]&=\left.\frac{d}{dr}\Phi(w+rh)\right|_{r=0},\\
 D^2\Phi(w)[h,k]&=
 \left.\frac{\partial^2}{\partial r\,\partial s}
              \Phi(w+rh+sk)\right|_{r=s=0},\qquad r,s\in\mathbb R.
\end{align*}
We write $\mathbf1_A$ for the indicator of a set $A$,
$a\wedge b=\min\{a,b\}$, and $\lceil a\rceil$ for the least
integer greater than or equal to $a$. The notation $|I|$ denotes
the length of a time interval, while $|z|$ is the Euclidean norm
of a mark $z\in\mathbb R^m$. Finite time horizons $T$ are always
strictly positive. For stopping times, we use $\inf\varnothing=\infty$.

For nonnegative quantities $A,B$, the relation $A\lesssim B$
means $A\leq CB$ for a finite deterministic constant depending
only on the fixed torus and the fixed frequency decomposition.
A subscript, as in $\lesssim_{\mathfrak p}$, records additional
dependence on the parameters $\mathfrak p$. Generic constants may
change from line to line. Constants fixed in a statement or used
to choose a time step are displayed explicitly.

Fix a complete stochastic basis
$(\Omega,\mathcal F,(\mathcal F_t)_{t\ge0},\mathbb P)$ satisfying
the usual conditions: the filtration is right-continuous and
$\mathcal F_0$ contains all $\mathbb P$-null sets. Here $\Omega$
is the sample space, $\mathcal F$ its sigma field, $\mathbb P$ the
probability measure, and $\mathbb E$ expectation under $\mathbb P$.
For a strongly measurable random variable $X$ with values in
$\mathcal B$, $\|X\|_{L^q(\Omega;\mathcal B)}
=(\mathbb E\|X\|_{\mathcal B}^q)^{1/q}$ when $1\leq q<\infty$.
Almost-sure statements are with respect to $\mathbb P$.
Let $m\ge1$ be an integer and
$Z_0=\{z\in\mathbb R^m:0<|z|\le1\}$. The following assumptions
are in force throughout the paper.

\begin{assumption}\label{ass:noise}
The measure $\nu$ is a Borel L\'evy measure on $Z_0$ such that,
for every Borel set $A\subset Z_0$,
\begin{equation}\label{eq:noiseass}
 \nu(A)=\nu(-A),\qquad
 \nu(Z_0)=\infty,\qquad
 m_2:=\int_{Z_0}|z|^2\,\nu(dz)<\infty.
\end{equation}
The random measure $N$ is an $(\mathcal F_t)$-Poisson random
measure with compensator $dt\,\nu(dz)$, and
$\widetilde N=N-dt\,\nu$. Its increments after each deterministic
time are independent of the past sigma field. The coefficient is
the real-linear map
\begin{equation}\label{eq:profiles}
 Gz=\sum_{k=1}^m z_k f_k,
 \qquad f_k\in H^3(\mathbb T^3;\mathbb C).
\end{equation}
\end{assumption}

Set
\begin{equation}\label{eq:noise-sizes}
 \begin{aligned}
 q_j&=\int_{Z_0}\|Gz\|_{H^j}^2\,\nu(dz),\qquad j\in\{0,1,3\},\\
 q_3(r)&=\int_{\{z\in Z_0:|z|\le r\}}\|Gz\|_{H^3}^2\,\nu(dz),
 \qquad 0<r\leq1.
 \end{aligned}
\end{equation}
All these quantities are finite, and $q_3(r)\to0$ as $r\downarrow0$.
Constants may depend on $G$, $m_2$, and the fixed torus and are
uniform in the small-jump cutoff. The arguments below also apply
when $\nu(Z_0)<\infty$.

The process
\[
 L(t)=\int_{(0,t]}\int_{Z_0}z\,\widetilde N(ds,dz)
\]
is defined in the square-integrable martingale sense. There is no
Gaussian component and no additional deterministic drift.
The compensated integral is the square-integrable limit of its
truncations, allowing $\int_{Z_0}|z|\,\nu(dz)=\infty$.
This convention fixes the centering of the noise in \eqref{eq:snls}.

The stochastic convolution is
\begin{equation}\label{eq:convolution}
 \Psi(t)=\int_{(0,t]}\int_{Z_0}S(t-s)Gz\,\widetilde N(ds,dz).
\end{equation}
It has an adapted c\`adl\`ag version with values in $H^3$.
The integral may equivalently be defined through the
$H^3$-valued martingale $S(-t)\Psi(t)$.

\begin{definition}\label{def:solution}
A solution of \eqref{eq:snls} is an adapted process $u$ with
c\`adl\`ag $H^1$ paths such that, for every $T<\infty$,
\begin{equation}\label{eq:solution-class}
 v:=u-\Psi\in X^1([0,T])
\end{equation}
almost surely, and
\begin{equation}\label{eq:mild}
 u(t)=S(t)u_0-i\int_0^t S(t-s)\mathcal N(u(s))\,ds+\Psi(t)
 \qquad(0\le t\le T).
\end{equation}
The space $X^1(I)$ is the continuous atomic restriction space
defined in Section~\ref{sec:deterministic}. In particular,
$X^1(I)\hookrightarrow C(I;H^1)$. The identity
\eqref{eq:mild} is initially understood in $H^{-1}$, and its
nonlinear integral in fact has a continuous $H^1$ representative.
\end{definition}

Indeed, the Sobolev embedding $H^1\hookrightarrow L^6$ gives
$\mathcal N:H^1\to H^{-1}$, locally Lipschitz on bounded sets.
A c\`adl\`ag path on a compact interval is bounded, so its
nonlinearity belongs to $L^1([0,T];H^{-1})$. Using $u(s-)$
instead of $u(s)$ in the time integral gives the same result.
All stochastic integrals with state-dependent integrands below
use the predictable left limit.

Pathwise uniqueness in this paper means that two solutions in
Definition~\ref{def:solution}, with the same initial datum and
Poisson random measure on the same stochastic basis, are
indistinguishable. A probabilistically strong solution means a
solution constructed on this given basis.

\subsection{Main theorem}

Define the mass, energy, and their positive sum by
\begin{equation}\label{eq:energyfunctional}
 \begin{split}
 M(w)&=\|w\|_{L^2}^2,\qquad
 E(w)=\frac12\|\nabla w\|_{L^2}^2+\frac16\|w\|_{L^6}^6,\\
 F(w)&=1+M(w)+E(w).
 \end{split}
\end{equation}
The distinction between $M(w)$ and the spacetime quantity $M(I)$
introduced later is indicated by their arguments. We have
$\|w\|_{H^1}^2\le2F(w)$ and $F(w)\ge1$.

For $0<\varepsilon<1$, write
$Z_\varepsilon=\{z:\varepsilon<|z|\le1\}$ and let
$u^\varepsilon$ solve
\begin{equation}\label{eq:cutoff-model}
 du^\varepsilon
  =i\bigl(\Delta u^\varepsilon-\mathcal N(u^\varepsilon)\bigr)dt
    +\int_{Z_\varepsilon}Gz\,\widetilde N(dt,dz),
 \qquad u^\varepsilon(0)=u_0.
\end{equation}
The same cutoff defines $\Psi^\varepsilon$, and we set
$v^\varepsilon=u^\varepsilon-\Psi^\varepsilon$.
All cutoffs use the original random measure $N$.

\begin{theorem}\label{thm:main}
Under Assumption~\ref{ass:noise}, every deterministic
$u_0\in H^1(\mathbb T^3)$ gives a global probabilistically strong
solution of \eqref{eq:snls}. The following conclusions hold.
\begin{enumerate}[label=\textup{(\roman*)},leftmargin=*]
\item For every finite $T$, almost surely
\[
 u\in D([0,T];H^1),\qquad u-\Psi\in X^1([0,T]).
\]
Pathwise uniqueness holds in this class.
\item On every finite time interval,
\begin{equation}\label{eq:main-moment}
 \sup_{0<\varepsilon<1}\mathbb E\sup_{0\leq t\leq T}
       F(u^\varepsilon(t))^2
 +\mathbb E\sup_{0\leq t\leq T}F(u(t))^2
       \lesssim_{T,G,m_2} F(u_0)^2.
\end{equation}
\item As $\varepsilon\downarrow0$,
\begin{equation}\label{eq:main-cutoff}
 \sup_{0\leq t\leq T}\|u^\varepsilon(t)-u(t)\|_{H^1}
 +\|v^\varepsilon-v\|_{X^1([0,T])}
           \longrightarrow0
 \quad\hbox{in probability}.
\end{equation}
\item If deterministic data $u_{0,n}$ tend to $u_0$ in $H^1$
and $u_n$ denotes the solution driven by the same noise, then
\begin{equation}\label{eq:main-data}
 \begin{split}
 &\sup_{0\leq t\leq T}\|u_n(t)-u(t)\|_{H^1}\\
 &\quad+\|(u_n-\Psi)-(u-\Psi)\|_{X^1([0,T])}
           \longrightarrow0
 \quad\hbox{in probability}.
 \end{split}
\end{equation}
\end{enumerate}
\end{theorem}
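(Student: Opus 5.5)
The plan follows the architecture described in the introduction: a deterministic forced critical theory for the continuous remainder, finite-activity concatenation, cutoff-uniform mass--energy bounds, and removal of the cutoff by stability.

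\emph{Step 1: deterministic forced theory.} Write $u=v+\Psi$, so that $v$ solves
\[
 v(t)=S(t)u_0-i\int_0^tS(t-s)\mathcal N(v+\Psi)\,ds .
\]
Since $f_k\in H^3$, the path $\Psi$ is c\`adl\`ag in $H^3$. By Sobolev embedding, $\|\Psi\|_{L^\infty H^3}$ controls every spacetime norm of $\Psi$ that enters the periodic multilinear estimates of Herr--Tataru--Tzvetkov and Li--Okamoto--Tao; continuity of $\Psi$ is never used there. I would therefore assume from Section~\ref{sec:deterministic} a forced local theory and a forced stability (perturbation) lemma in $X^1$. Its hypotheses are an $L^\infty H^3$ bound on the forcing and an $L^\infty H^1$ bound on $u$. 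Its conclusions are that the $X^1$ norm of $v$ and the continuous dependence on $(v(0),\Psi)$ in the $L^\infty H^3$ distance are controlled by those quantities, via the Ionescu--Pausader unit-interval bounds and finite concatenation.

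\emph{Step 2: finite activity and uniform energy bounds.} For the cutoff $\varepsilon$ we have $\nu(Z_\varepsilon)<\infty$, so almost surely there are finitely many jumps on $[0,T]$. Between jumps the equation is the deterministic quintic NLS with a compensating drift $-\int_{Z_\varepsilon}Gz\,\nu(dz)$, which vanishes by the symmetry of $\nu$. I would solve it between jumps with the Ionescu--Pausader flow, add $Gz$ at each jump time, and concatenate. This yields $u^\varepsilon$ with $v^\varepsilon\in X^1$, and the construction is automatically adapted. Next apply It\^o's formula for Poisson integrals to $F(u^\varepsilon)$. The Hamiltonian drift conserves $M+E$. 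The jump term is
\[
 F(w+Gz)-F(w)-DF(w)[Gz].
\]
By Taylor expansion it is bounded by $\|Gz\|_{H^1}^2F(w)^{2/3}$, plus a term $\|Gz\|_{L^\infty}^2\|w\|_6^4$, which is $\lesssim F(w)$ times $|z|^2$. The compensated first-order term is a martingale. Applying Burkholder--Davis--Gundy to $F^2$, with the quadratic variation bounded by $\int|F(w+Gz)-F(w)|^2\nu(dz)\lesssim F(w)^2\,\|Gz\|_{H^1}^2$, and then Gronwall gives the $\varepsilon$-uniform bound \eqref{eq:main-moment}. All constants depend only on $q_1$, $q_3$ and $m_2$.

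\emph{Step 3: removal of the cutoff.} Doob's inequality for the martingale $S(-t)\Psi(t)$ gives
\[
 \mathbb E\sup_{t\le T}\|\Psi^\varepsilon-\Psi\|_{H^3}^2\lesssim q_3(\varepsilon)\to0 .
\]
Localize on the event $\{\sup_t F(u^\varepsilon(t))\le R,\ \sup_t\|\Psi\|_{H^3}\le R\}$, whose complement has small probability by Step~2 and Chebyshev. On this event, the stability lemma applied to two cutoffs $\varepsilon,\varepsilon'$ shows that $(v^\varepsilon)$ is Cauchy in probability in $X^1([0,T])$. This is the main obstacle: the stability lemma needs an a priori $L^\infty H^1$ bound on one of the two solutions, and only $u^\varepsilon$ carries it, while the proximity of $\Psi^{\varepsilon'}$ must be measured in a norm compatible with jumps. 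I would handle it by taking $u^\varepsilon$ as the reference solution, treating $\Psi^{\varepsilon'}-\Psi^\varepsilon$ as a small forcing error measured in $L^\infty H^3$, and iterating over the finitely many subintervals produced by Step~1.

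The limit $v$ lies in $X^1$, $u=v+\Psi$ satisfies \eqref{eq:mild}, and adaptedness passes to the limit. Convergence in probability together with Fatou's lemma gives the moment bound for $u$, which completes (ii) and (iii). Pathwise uniqueness in (i) follows from the same stability lemma applied pathwise: two solutions share $\Psi$, and their remainders $v_1,v_2\in X^1$ solve the same forced equation with the same data, so the difference vanishes on short intervals and hence on all of $[0,T]$. Part (iv) is the same argument with zero forcing error and data error $\|u_{0,n}-u_0\|_{H^1}$, localized on the events where $\sup_tF(u(t))$ and $\|\Psi\|_{L^\infty H^3}$ are bounded.
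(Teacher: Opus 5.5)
Your proposal is correct and follows the paper's architecture: the forced theory of Section~\ref{sec:deterministic}, finite-activity concatenation with the symmetry of $\nu$ removing the drift, cutoff-uniform second moments of $F$, Doob for $S(-t)(\Psi-\Psi^\varepsilon)$ in $H^3$, localization plus stability, and uniqueness by stability with identical forcing. The only genuine divergence is in Step~3, and there the ``main obstacle'' you describe is not present. Step~2 is uniform in the cutoff, so $u^{\varepsilon'}$ carries exactly the same energy control as $u^\varepsilon$. The paper therefore localizes on the joint event $\mathcal A_R^{\varepsilon,\eta}$ where $\sup_tF(u^\varepsilon)$, $\sup_tF(u^\eta)$, $\sup_t\|\Psi^\varepsilon\|_{H^3}$ and $\sup_t\|\Psi^\eta\|_{H^3}$ are all at most $R$; this event has complement of probability $O(R^{-2})$. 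It then applies the \emph{two-sided} Theorem~\ref{det:stability}, which assumes $H^1$ bounds on both remainders and gives $d_T(\varepsilon,\eta)\le K_T(R)\sup_t\|\Psi^\varepsilon(t)-\Psi^\eta(t)\|_{H^3}$ with no smallness condition, hence the explicit bound \eqref{eq:intro-error}.

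Your one-sided bootstrap around $u^\varepsilon$ is the classical long-time perturbation argument and would also work, but it is not what Section~\ref{sec:deterministic} states. You would have to rerun a contraction for $v^{\varepsilon'}-v^\varepsilon$ on each of the boundedly many pieces of the reference solution's $Q$-partition, as in the proof of Proposition~\ref{det:restart}, and identify the fixed point with $v^{\varepsilon'}$ by Corollary~\ref{det:uniqueness}. You would also get the estimate only when $\|\Psi^{\varepsilon'}-\Psi^\varepsilon\|_{L^\infty H^3}$ lies below a threshold depending on $R,T$, which is harmless in probability. What this route buys, namely independence from bounds on the second solution, is not needed here. The same applies to (iv), since $\sup_nF(u_{0,n})<\infty$ makes the energy moments of $u_n$ uniform.

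Two technical points are also left implicit. First, the Gronwall step needs localization; the paper stops at $\tau_n$ and uses the overshoot bound $F(w+Gz)\le A_GF(w)$ from Lemma~\ref{prob:taylor}. Second, measurability of $\|v^\varepsilon-v^\eta\|_{X^1([0,T])}$ is not automatic in this atomic space, and the paper settles it in Lemma~\ref{prob:measurable} via the finite-jump parametrization.
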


The convergence in \eqref{eq:main-cutoff} uses the uniform
distance on c\`adl\`ag paths, which is stronger than convergence
in the Skorokhod $J_1$ topology. For paths $x_n,x\in D([0,T];H^1)$,
convergence $x_n\to x$ in $J_1$ means that there are increasing
homeomorphisms $\lambda_n:[0,T]\to[0,T]$ such that
\[
 \sup_{0\leq t\leq T}|\lambda_n(t)-t|
 +\sup_{0\leq t\leq T}\|x_n(\lambda_n(t))-x(t)\|_{H^1}\longrightarrow0.
\]
Uniform convergence is possible here because the approximations
retain exactly the same large jumps,
at the same times and with the same marks. The $X^1$ distance is
applied to the continuous remainders.

The proof also gives an explicit localized error estimate.
For each $R\ge1$, there is a finite deterministic constant $C_T(R)$,
independent of the cutoffs, such that the following estimate holds
for all $0<\varepsilon,\eta<1$. Set
\[
 d_T(\varepsilon,\eta)
  =\sup_{0\leq t\leq T}\|u^\varepsilon(t)-u^\eta(t)\|_{H^1}
      +\|v^\varepsilon-v^\eta\|_{X^1([0,T])},
\]
Then
\begin{equation}\label{eq:intro-error}
 \mathbb P\bigl(d_T(\varepsilon,\eta)>a\bigr)
 \le \frac{C_{T,u_0,G,\nu}}{R^2}
       +\frac{4T C_T(R)^2}{a^2}\,
           q_3\bigl(\max(\varepsilon,\eta)\bigr)
 \qquad(a>0).
\end{equation}
The cutoff limit is taken with $R$ fixed, followed by
$R\to\infty$. For each fixed $R$, the deterministic large-data
theory supplies a finite constant $C_T(R)$.

\begin{remark}
\label{rem:deterministic-extension}
When $Gz=0$ for $\nu$-almost every $z$, equation
\eqref{eq:snls} reduces to the deterministic defocusing quintic
equation, and Theorem~\ref{thm:main} recovers the arbitrary-data
$H^1$ global theory of Ionescu and Pausader~\cite{IP2012}.
For nonzero noise, it extends this Cauchy theory to additive
L\'evy perturbations satisfying Assumption~\ref{ass:noise}.
Corollary~\ref{cor:vanishing} establishes
the corresponding vanishing-noise limit on every finite time interval.
\end{remark}

The same construction also gives a long-time statement beyond
well-posedness. If $q_0>0$, Proposition~\ref{dyn:massescape}
shows that every bounded $L^2$ ball is exited in finite expected
time and that $\limsup_{t\to\infty}\|u(t)\|_2=\infty$
almost surely. These conclusions follow from the stopped
pathwise mass balance. Corollary~\ref{dyn:stoch-nonscattering} consequently rules
out convergence to any $L^2$-bounded reference trajectory,
including a free orbit with a random terminal profile.

\subsection{Proof strategy and organization}
\label{subsec:strategy}

The proof combines deterministic perturbation estimates with
probabilistic approximation.

First, write $u=v+\Psi$. For a fixed forcing path $f$ on
$I=[a,b]\subset[0,\infty)$, the
remainder equation is
\[
 v(t)=S(t-a)v(a)-i\int_a^t
             S(t-s)\mathcal N(v(s)+f(s))\,ds.
\]
The mixed estimates of Li, Okamoto and Tao \cite{LOT2025}
hold for measurable forcing in the space $Z(I)$ defined below.
We combine them with the deterministic unit-time large-data
bound of Ionescu and Pausader \cite{IP2012}. A comparison with
the deterministic solution, followed by a finite subdivision
of its critical spacetime norm, gives a restart time depending
only on bounds for $\|v(a)\|_{H^1}$ and
$\|f\|_{L^\infty H^3}$.

Second, a finite-activity approximation is constructed by the
deterministic flow between jumps and by the translation
$w\mapsto w+Gz$ at a jump. Mass and energy are conserved between
jumps. A Taylor expansion of their exact increments, together
with the cancellation of the first-order compensator, gives
a cutoff-independent maximal second moment of $F$. The jump
product formula is applied to the finite-activity process.

Third, in the interaction representation the convolution is
a Hilbert-space martingale. Doob's inequality controls its
small-jump tail in the mean square of the uniform $H^3$ path
distance on every finite time interval. On events
where the energies and convolutions are bounded, deterministic
stability turns this tail estimate into the Cauchy bound
\eqref{eq:intro-error}. The limit is taken on the original
stochastic basis. A countable subsequence and completeness give
the pathwise construction, and uniqueness identifies all cutoff
limits. The necessary measurability of the remainder norms is
checked through the finite-jump parameterization.

Section~\ref{sec:deterministic} proves the deterministic forcing
results, including the difference estimates and the order of
choices of all restart parameters. Section~\ref{sec:probability}
constructs the stochastic solution and proves
Theorem~\ref{thm:main}. Section~\ref{sec:consequences} records
stronger convergence in the energy path norm and the finite-time
vanishing-noise limit. Section~\ref{sec:dynamics} proves the
pathwise mass balance, the quantitative exit estimate and the
absence of ordinary free scattering; it also explains the
distinct deterministic obstruction on the torus.

\section{Deterministic estimates for measurable perturbations}
\label{det:section}
\label{sec:deterministic}

We establish estimates for a prescribed measurable perturbation and a
continuous remainder in the atomic solution space.

Throughout this section, $\mathbb T^3=(\mathbb R/2\pi\mathbb Z)^3$ is
equipped with normalized Haar measure, $S(t)=e^{it\Delta}$, and
$\mathcal N(z)=|z|^4z$. For a finite interval $I=[a,b]$, $a<b$,
and prescribed initial value $v_a\in H^1$, we consider
\begin{equation}\label{det:equation}
 v(t)=S(t-a)v_a-i\int_a^t S(t-s)\mathcal N(v(s)+f(s))\,ds.
\end{equation}
The construction and stability estimates below apply to strongly
measurable perturbations $f\in L^\infty(I;H^3)$. Their constants
are uniform over bounded subsets of this space.

\subsection{Atomic spaces and interval conventions}
\label{det:spaces}

A scalar $U^2$-atom is a step function
\[
 \begin{gathered}
 a(t)=\sum_{j=1}^{J}\mathbf1_{[t_{j-1},t_j)}(t)c_{j-1},
 \qquad c_0=0,\\
 -\infty=t_0<t_1<\cdots<t_J=\infty,
 \qquad \sum_{j=0}^{J-1}|c_j|^2=1.
 \end{gathered}
\]
Here $J$ is a positive integer and $c_j\in\mathbb C$.
The space $U^2(\mathbb R;\mathbb C)$ consists of sums
$u=\sum_{k\geq1}\lambda_k a_k$, where $a_k$ are $U^2$-atoms,
$\lambda_k\in\mathbb C$, and $\sum_k|\lambda_k|<\infty$.
Its norm is
\[
 \|u\|_{U^2(\mathbb R;\mathbb C)}
 =\inf\left\{\sum_{k\geq1}|\lambda_k|:
     u=\sum_{k\geq1}\lambda_k a_k,\quad a_k\text{ are }U^2\text{-atoms}\right\}.
\]
We abbreviate this norm by $\|u\|_{U^2}$. These series converge
uniformly in time and in the atomic norm. We use the
standard right-continuous representatives. The atomic construction and
its restriction properties are developed in \cite{HHK2009,HHK2010}; the
Schr\"odinger spaces below follow \cite{HTT2011}.

For $s\in\mathbb R$ and $w:\mathbb R\to H^s(\mathbb T^3)$, set
\begin{equation}\label{det:Xglobal}
 \|w\|_{\widehat X^s}^2
 =\sum_{\xi\in\mathbb Z^3}\langle\xi\rangle^{2s}
  \|e^{it|\xi|^2}\widehat w(t,\xi)\|_{U^2(\mathbb R;\mathbb C)}^2,
 \qquad
 X^s=\widehat X^s\cap C(\mathbb R;H^s).
\end{equation}
Here $\widehat X^s$ consists of the functions for which the displayed
norm is finite. The continuous subspace is closed: convergence in
$\widehat X^s$ implies uniform convergence in $H^s$.
The space $X^s$ carries the inherited norm
$\|w\|_{X^s}=\|w\|_{\widehat X^s}$.
For a compact interval $I$, define
\begin{equation}\label{det:Xrestriction}
 X^s(I)=\{w|_I:w\in X^s\},\qquad
 \|v\|_{X^s(I)}=
 \inf_{w\in X^s,\ w|_I=v}\|w\|_{\widehat X^s}.
\end{equation}
The quotient construction gives a Banach space, continuously embedded
in $C(I;H^s)$. Closed endpoints are interpreted through this continuous
representative. Restriction to a smaller interval cannot increase the
norm. For an interval $J$, membership in $X^s_{\mathrm{loc}}(J)$
means membership in $X^s(I)$ on every compact subinterval $I\subset J$.
In particular, $v\in X^s_{\mathrm{loc}}([a,b))$ means
$v|_{[a,c]}\in X^s([a,c])$ for every $a<c<b$.
We shall use the cases $s=0$ and $s=1$.

For a scalar function $z:\mathbb R\to\mathbb C$, its total variation is
\[
 \operatorname{Var}_{\mathbb R}(z)
 =\sup_{\substack{J\geq1\\-\infty<t_0<\cdots<t_J<\infty}}
       \sum_{j=1}^J|z(t_j)-z(t_{j-1})|.
\]
On $[a,b]$, the quantity $\operatorname{Var}_{[a,b]}(z)$ is defined
by the same supremum over partitions $a=t_0<\cdots<t_J=b$.
The corresponding variation function is
$V_z(t)=\operatorname{Var}_{[a,t]}(z)$ for $a<t\leq b$, with
$V_z(a)=0$.

\begin{lemma}\label{det:variation}
Let $z:\mathbb R\to\mathbb C$ be continuous and of finite total
variation. Suppose that $z$ is zero to the left of a finite interval
and constant to its right. Then
\[
 \|z\|_{U^2}\leq\operatorname{Var}_{\mathbb R}(z).
\]
\end{lemma}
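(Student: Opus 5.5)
We first approximate $z$ by step functions, bound each step function by its total variation in $U^2$, and then pass to the limit using completeness and uniform convergence.

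\emph{Setup.} Let $z$ vanish on $(-\infty,\alpha]$ and be constant on $[\beta,\infty)$, with $\alpha<\beta$. By continuity, $z(\alpha)=0$.

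\emph{Step functions.} For a partition $\alpha=t_0<t_1<\cdots<t_J=\beta$, set
\[
 z_P(t)=\sum_{j=1}^{J}\mathbf1_{[t_{j-1},t_j)}(t)\,z(t_{j-1})+\mathbf1_{[\beta,\infty)}(t)\,z(\beta),
\]
which vanishes for $t<\alpha$. We write $z_P$ as a telescoping sum of Heaviside pieces:
\[
 z_P=\sum_{j=1}^{J}\bigl(z(t_j)-z(t_{j-1})\bigr)\mathbf1_{[t_j,\infty)}.
\]
The first piece $z(t_0)\mathbf1_{[t_0,\infty)}$ is absent because $z(t_0)=0$.

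Each function $\mathbf1_{[t_j,\infty)}$ is a $U^2$-atom. Indeed, take the partition $-\infty<t_j<\infty$ with $c_0=0$ and $c_1=1$, so that $\sum|c_k|^2=1$. Hence
\[
 \|z_P\|_{U^2}\le\sum_{j=1}^J|z(t_j)-z(t_{j-1})|\le\operatorname{Var}_{\mathbb R}(z).
\]

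\emph{Limit.} Take a nested sequence of partitions $P_n$ of $[\alpha,\beta]$ with mesh tending to zero. Since $z$ is uniformly continuous on $[\alpha,\beta]$, $z_{P_n}\to z$ uniformly on $\mathbb R$.

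We need convergence in $U^2$, not just uniformly. For $n<m$, $P_m$ refines $P_n$, and $z_{P_m}-z_{P_n}$ is again a step function vanishing to the left of $\alpha$. Hence it can be written as the Heaviside sum over the points of $P_m$. The coefficient at each point is a difference of values of $z$ at two points lying in one cell of $P_n$, up to the jump of $z_{P_n}$ at its own partition points.

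A cleaner route is to estimate
\[
 \|z_{P_m}-z_{P_n}\|_{U^2}\le\sum_{\text{cells }[t_{j-1},t_j]\text{ of }P_n}\operatorname{Var}_{[t_{j-1},t_j]}\bigl(z-z(t_{j-1})\bigr)\cdot(\text{const}).
\]
However, this is bounded by a multiple of $\operatorname{Var}_{\mathbb R}(z)$, which does not tend to zero. So this estimate alone does not give a Cauchy sequence, and we avoid needing $U^2$ convergence of $z_{P_n}$.

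Instead we use weak-$*$ lower semicontinuity. The main obstacle is transferring the bound from $z_{P_n}$ to $z$.

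\emph{Handling the main obstacle.} First we try a direct argument: $\sum_j|z(t_j)-z(t_{j-1})|\to\operatorname{Var}(z)$, and that variation is at least the $U^2$ norm. This fails, because we need the limit to lie in $U^2$.

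The robust route is to use the variation function $V_z$, which is continuous and nondecreasing. Choose points $t^n_j$ where $V_z$ takes the values $j\operatorname{Var}(z)/n$. For $n<m$ with $P_m\supset P_n$, the difference $z_{P_m}-z_{P_n}$ has pieces supported in distinct cells of $P_n$. On each cell this piece is a step function of size at most $\operatorname{Var}(z)/n$ and variation at most $\operatorname{Var}(z)/n$ within the cell.

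Pieces supported in disjoint time cells combine in $U^2$ in an $\ell^2$ way. Concatenating their atoms gives one atom with $\sum|c|^2$ equal to the sum of the squared coefficients. The analogous bound for a sum of disjointly supported step functions, each vanishing at its cell's left endpoint and returning to zero after the cell, gives
\[
 \|z_{P_m}-z_{P_n}\|_{U^2}\lesssim\Bigl(\sum_{\text{cells}}(\operatorname{Var}/n)^2\Bigr)^{1/2}\lesssim\operatorname{Var}(z)\,n^{-1/2}.
\]
Here each cell piece is a single bounded step function; for finite $m$ it can be split into atoms in an $\ell^2$-summable manner.

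Thus $z_{P_n}$ is Cauchy in the Banach space $U^2$. Since $U^2$ convergence implies uniform convergence, the $U^2$ limit equals $z$. Therefore
\[
 \|z\|_{U^2}=\lim_n\|z_{P_n}\|_{U^2}\le\operatorname{Var}_{\mathbb R}(z).
\]

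The delicate point to verify is the $\ell^2$ gluing across disjoint cells. It holds because a function that is piecewise constant with a single jump per cell has an explicit atom decomposition.
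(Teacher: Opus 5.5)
Your final route has the same architecture as the paper's proof. It uses the Heaviside (increment) decomposition giving $\|z_P\|_{U^2}\le\operatorname{Var}_{\mathbb R}(z)$, partitions at the levels of $V_z$ so that each cell carries variation $V/n$ with $V=\operatorname{Var}_{\mathbb R}(z)$, a Cauchy argument in $U^2$, and identification of the $U^2$ limit with the uniform limit $z$.

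The gap is the Cauchy estimate $\|z_{P_m}-z_{P_n}\|_{U^2}\lesssim Vn^{-1/2}$, which you claim for all nested $m>n$. Concatenating atoms across disjoint cells gives an $\ell^2$ combination only when each cell piece is a scalar multiple of a \emph{single} atom. For $m>2n$ the piece on a cell $[t_{j-1},t_j)$ of $P_n$ is not ``a single jump per cell''. It takes up to $m/n$ values $z(s)-z(t_{j-1})$, $s\in P_m$, each of modulus up to $V/n$. Normalizing these in $\ell^2$ gives a coefficient of order $(m/n)^{1/2}V/n$ per cell, hence only $m^{1/2}V/n$ after gluing, which does not tend to zero as $m\to\infty$. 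If instead you bound each cell piece by its variation through Heaviside pieces, those atoms live on half-lines rather than inside the cell, so they cannot be concatenated across cells. An $\ell^2$ gluing inequality for general disjointly supported $U^2$ functions does hold up to a constant. Proving it, however, requires the duality between $U^2$ and $V^2$ together with superadditivity of quadratic variation, and neither appears in your argument. As written, the Cauchy property is not established.

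The repair is the paper's device: compare only consecutive dyadic levels, then sum. Let $P_k$ be the partition at the levels $i2^{-k}V$, chosen nested, for example via first hitting times of $V_z$. Then $z_{P_{k+1}}-z_{P_k}$ vanishes outside $[\alpha,\beta)$. It is a step function on $P_{k+1}$ with at most $2^{k+1}$ values, each of modulus at most $2^{-k}V$; in fact each cell of $P_k$ carries exactly one nonzero value, which is what your last sentence really describes. Normalizing these values in $\ell^2$ exhibits the difference as a multiple of one atom, so
$\|z_{P_{k+1}}-z_{P_k}\|_{U^2}\le(2^{k+1})^{1/2}2^{-k}V$.
This bound is summable in $k$, which gives the Cauchy property; by the triangle inequality it also recovers your $n^{-1/2}$ bound for nested pairs, with no gluing of general $U^2$ functions needed.

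You should also delete the abandoned weak-$*$ lower semicontinuity remark and the mesh-to-zero partitions. With level partitions, uniform convergence follows directly from $|z_{P_k}-z|\le 2^{-k}V$.
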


\begin{proof}
Choose $[a,b]$ so that $z=0$ on $(-\infty,a]$ and $z=z(b)$
on $[b,\infty)$, and set $V=\operatorname{Var}_{\mathbb R}(z)$.
If $V=0$, then $z=0$ and the assertion follows. Suppose $V>0$.
The variation function $V_z$ is continuous and nondecreasing.
For $n\geq0$, choose a nested partition
$a=t_{n,0}<\cdots<t_{n,J_n}=b$ at the levels
$k2^{-n}V$, $1\leq k<2^n$, of $V_z$. Then $J_n\leq2^n$ and
\[
 \operatorname{Var}_{[t_{n,j-1},t_{n,j}]}(z)\leq2^{-n}V.
\]
Let $z_n$ take the value $z(t_{n,j-1})$ on
$[t_{n,j-1},t_{n,j})$, be zero before $a$, and equal $z(b)$
on $[b,\infty)$. Its increment representation is
\[
 z_n(t)=\sum_{j=1}^{J_n}
  \bigl(z(t_{n,j})-z(t_{n,j-1})\bigr)
                  \mathbf1_{[t_{n,j},\infty)}(t).
\]
Each half-line indicator is a $U^2$-atom. Consequently,
\[
 \|z_n\|_{U^2}
 \leq\sum_{j=1}^{J_n}|z(t_{n,j})-z(t_{n,j-1})|
 \leq V,
 \qquad \|z_n-z\|_{L^\infty(\mathbb R)}\leq2^{-n}V.
\]
The step function $z_{n+1}-z_n$ vanishes outside $[a,b)$.
On the finer partition it has at most $2^{n+1}$ values,
each of modulus at most $2^{-n}V$. Normalizing these values
in $\ell^2$ and using the atomic definition gives
\[
 \|z_{n+1}-z_n\|_{U^2}
 \leq (2^{n+1})^{1/2}2^{-n}V
 \lesssim 2^{-n/2}V.
\]
The series of successive differences therefore converges in
$U^2$. Since $\|h\|_{L^\infty}\leq\|h\|_{U^2}$, its limit
coincides with the uniform limit $z$. Passing to the limit
in $\|z_n\|_{U^2}\leq V$ proves the claimed bound.
This completes the proof.
\end{proof}

We shall use the estimates
\begin{equation}\label{det:linear}
 \|v\|_{C(I;H^1)}\leq\|v\|_{X^1(I)},\qquad
 \|S(t-a)\phi\|_{X^1(I)}\leq\|\phi\|_{H^1}.
\end{equation}
For the second estimate, choose a continuous nondecreasing cutoff
$\chi$ that is zero on $(-\infty,a-1]$ and one on $[a,\infty)$.
The extension $\chi(t)S(t-a)\phi$ belongs to $X^1$.
Applying Lemma~\ref{det:variation} to each Fourier coefficient,
with $\operatorname{Var}_{\mathbb R}(\chi)=1$, bounds its norm by $\|\phi\|_{H^1}$.
These estimates also appear as the restriction-space embedding and
the free evolution estimate in \cite[Section 2.1 and Proposition
2.7]{LOT2025}.

For $I=[a,b]$ and $H\in L^1(I;H^{-1})$, define the
$H^{-1}$-valued Bochner integral, source class, and Duhamel norm by
\begin{equation}\label{det:source}
 \begin{aligned}
 \mathcal I_aH(t)&=\int_a^t S(t-s)H(s)\,ds,\\
 N^1(I)&=\{H\in L^1(I;H^{-1}):\mathcal I_aH\in X^1(I)\},\\
 \|H\|_{N^1(I)}&=\|\mathcal I_aH\|_{X^1(I)}.
 \end{aligned}
\end{equation}
For $H\in L^1(I;H^1)$, in particular,
\begin{equation}\label{det:L1source}
 \|H\|_{N^1(I)}\leq\|H\|_{L^1(I;H^1)}.
\end{equation}
Indeed, for $I=[a,b]$, extend $S(-t)\mathcal I_aH(t)$ by zero
before $a$ and by its value at $b$ after $b$. This extension is
continuous in $H^1$. Lemma~\ref{det:variation}, applied coefficientwise,
and Minkowski's inequality give
\[
 \|\mathcal I_aH\|_{X^1(I)}
 \leq\left(\sum_{\xi\in\mathbb Z^3}\langle\xi\rangle^2
       \left(\int_a^b|\widehat H(s,\xi)|\,ds\right)^2\right)^{1/2}
 \leq\int_a^b\|H(s)\|_{H^1}\,ds.
\]
For the quintic sources below, the integral initially has an
$H^{-1}$-valued meaning. The nonlinear estimates will show that it
has a continuous $H^1$-valued representative in $X^1(I)$.
Statements about a polynomial with values in $N^1(I)$ can equivalently
be read after applying $\mathcal I_a$, with values in the Banach space
$X^1(I)$.

\begin{lemma}\label{det:concatenation}
There is an absolute constant $C_{\rm con}$ with the following
property. Suppose $a=t_0<\cdots<t_J=b$ and
$v_j\in X^1([t_{j-1},t_j])$ have matching endpoint values. The
continuous function $v$ obtained by concatenation belongs to
$X^1([a,b])$ and satisfies
\begin{equation}\label{det:concatbound}
 \|v\|_{X^1([a,b])}
 \leq C_{\rm con}\sum_{j=1}^J
       \|v_j\|_{X^1([t_{j-1},t_j])}.
\end{equation}
The constant is independent of $J$ and of the lengths of the
subintervals.
\end{lemma}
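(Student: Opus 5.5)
We reduce the concatenation bound to a statement about scalar $U^2$ functions, applied one Fourier coefficient at a time. For each $j$, pick an extension $w_j\in X^1$ of $v_j$ with $\|w_j\|_{\widehat X^1}\le 2\|v_j\|_{X^1([t_{j-1},t_j])}$. The factor $2$ is absorbed into $C_{\rm con}$; if $v_j=0$, take $w_j=0$.

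\emph{Global extension by telescoping.} Write $\widetilde w_j(t,\xi)=e^{it|\xi|^2}\widehat w_j(t,\xi)$. Define
\[
 w=w_1\,\mathbf 1_{(-\infty,t_1)}
   +\sum_{j=2}^{J-1} w_j\,\mathbf 1_{[t_{j-1},t_j)}
   +w_J\,\mathbf 1_{[t_{J-1},\infty)}.
\]
Because endpoint values match and each $w_j$ is continuous, $w$ is continuous in $H^1$ and restricts to $v$ on $[a,b]$. It is more convenient to rewrite $w$ as a telescoping sum:
\[
 w=w_1+\sum_{j=2}^{J}(w_j-w_{j-1})\,\mathbf 1_{[t_{j-1},\infty)}.
\]
This identity holds because $w_j(t_{j-1})=w_{j-1}(t_{j-1})$. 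The key scalar fact is that multiplying by a sharp cutoff is bounded on $U^2$: for any $\tau$,
\[
 \|\mathbf 1_{[\tau,\infty)}z\|_{U^2}\le 2\|z\|_{U^2}.
\]
To see this, truncate an atom at $\tau$. This adds the value $0$ before $\tau$, or a single jump, and the result is an atom, or at most two atoms after renormalizing. The subtlety is the convention $c_0=0$ together with right-continuity. The new step $\mathbf1_{[\tau,\cdot)}$ agrees with the right-continuous convention, so the truncated function is a step function whose $\ell^2$ sum of values is at most $1$.

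\emph{Avoiding a constant that grows with $J$.} Applied naively, the cutoff bound gives $\|w\|\lesssim\sum_j\|w_j\|$ termwise, but only up to a factor coming from $\|w_j-w_{j-1}\|\le\|w_j\|+\|w_{j-1}\|$. That factor is at most $2\cdot2\cdot2$, so the constant is uniform in $J$ and in the interval lengths. Concretely, coefficientwise we get
\[
 \|\widetilde w(\cdot,\xi)\|_{U^2}\le \|\widetilde w_1(\xi)\|_{U^2}+2\sum_{j\ge2}\bigl(\|\widetilde w_j(\xi)\|_{U^2}+\|\widetilde w_{j-1}(\xi)\|_{U^2}\bigr)\le 5\sum_j\|\widetilde w_j(\xi)\|_{U^2}.
\]
Taking the weighted $\ell^2$ norm in $\xi$ and using Minkowski's inequality to interchange $\ell^2_\xi$ with $\sum_j$ gives
\[
 \|w\|_{\widehat X^1}\le 5\sum_j\|w_j\|_{\widehat X^1}\le 10\sum_j\|v_j\|_{X^1([t_{j-1},t_j])}.
\]
Since $w$ is continuous, it lies in $X^1$, and the infimum defining the restriction norm then yields \eqref{det:concatbound} with $C_{\rm con}=10$.

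\emph{Main obstacle.} The hard part is the cutoff lemma, because the $U^2$ atoms here are normalized with $c_0=0$ and defined on partitions extending to $\pm\infty$. I would check carefully that $\mathbf 1_{[\tau,\infty)}a$ is again a constant multiple, at most $1$, of an atom. This holds because its values form a subset of $\{0,c_1,\dots\}$ on a refined partition, so the $\ell^2$ norm does not increase. The bound then extends to absolutely convergent atomic sums, using uniform convergence of those sums. With that in hand the constant is in fact $1$ rather than $2$, and the rest is bookkeeping.
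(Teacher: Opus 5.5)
Your proof is correct and is essentially the paper's argument: you choose near-optimal extensions $w_j$, glue them with sharp time cutoffs, use the fact that truncating a $U^2$-atom to a half-open interval (for you, a half-line) leaves at most an atom, and then apply Minkowski's inequality in $\xi$. The only deviation is that you take $w_1$ and $w_J$ themselves as the tails outside $[a,b]$, whereas the paper attaches free evolutions of $v(a)$ and $v(b)$ with the continuous cutoff $\chi$, which it controls via Lemma~\ref{det:variation}; this is a harmless simplification, and note also that the telescoping identity is purely algebraic, since it is the matching endpoint values that make $w$ continuous.
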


\begin{proof}
For a half-open interval $J$ and a scalar $U^2$-atom
$a=\sum_k c_k\mathbf1_{[s_k,s_{k+1})}$, its restriction
$\mathbf1_Ja$ is a step function whose nonzero coefficients
form a subcollection of the $c_k$. Splitting at the two
endpoints of $J$ gives
\[
 \|\mathbf1_Ja\|_{U^2}
 \leq\left(\sum_{k:[s_k,s_{k+1})\cap J\ne\varnothing}
                    |c_k|^2\right)^{1/2}\leq1.
\]
Atomic decompositions and then the Fourier definition imply
\[
 \|\mathbf1_Jh\|_{U^2}\leq\|h\|_{U^2},
 \qquad
 \|\mathbf1_Jw\|_{\widehat X^1}\leq\|w\|_{\widehat X^1}.
\]
For $\varepsilon>0$, choose extensions $w_j\in X^1$ of $v_j$
such that
\[
 \|w_j\|_{\widehat X^1}
 \leq\|v_j\|_{X^1([t_{j-1},t_j])}+\varepsilon/J.
\]
Choose a continuous nondecreasing $\chi$ equal to zero before
$a-1$ and to one on $[a,\infty)$. Define
\begin{align*}
 W(t)={}&\mathbf1_{(-\infty,a)}(t)\chi(t)S(t-a)v(a)\\
 &+\sum_{j=1}^J\mathbf1_{[t_{j-1},t_j)}(t)w_j(t)
   +\mathbf1_{[b,\infty)}(t)S(t-b)v(b).
\end{align*}
The matching traces make $W$ continuous, and $W|_{[a,b]}=v$.
In the interaction representation, the first and last terms
have constant Fourier factors multiplied, respectively, by
$\chi-\mathbf1_{[a,\infty)}$ and $\mathbf1_{[b,\infty)}$.
Lemma~\ref{det:variation} gives $\|\chi\|_{U^2}\leq1$, so
\begin{align*}
 \|W\|_{\widehat X^1}
 &\leq2\|v(a)\|_{H^1}+\|v(b)\|_{H^1}
                    +\sum_{j=1}^J\|w_j\|_{\widehat X^1}\\
 &\leq4\sum_{j=1}^J\|v_j\|_{X^1([t_{j-1},t_j])}
       +\varepsilon.
\end{align*}
Thus $W\in X^1$. Taking the infimum over extensions and then
letting $\varepsilon\downarrow0$ proves
\eqref{det:concatbound}, with $C_{\mathrm{con}}=4$ as one
possible choice. This completes the proof.
\end{proof}

Fix a real radial function $\rho\in C_c^\infty(\mathbb R^3)$, where
$C_c^\infty$ denotes smooth compactly supported functions, such that
$0\leq\rho\leq1$, $\rho=1$ on $\{|\xi|\leq1\}$, and
$\rho=0$ on $\{|\xi|\geq2\}$. For dyadic integers
$N\in\{1,2,4,\ldots\}$, define the spatial Fourier multipliers
\[
 \begin{split}
  \widehat{P_1h}(\xi)&=\rho(\xi)\widehat h(\xi),\\
  \widehat{P_Nh}(\xi)&=
  [\rho(\xi/N)-\rho(2\xi/N)]\widehat h(\xi),\qquad N\geq2.
 \end{split}
\]
Thus $\sum_NP_N=\mathrm{Id}$, the frequency supports have finite
overlap, and $P_1$ includes the zero frequency. On time-dependent
functions these operators act only on the spatial variable. Put
$p_0=41/10$ and $p_1=100$. For any finite interval $I$, define
\begin{align}
 \|v\|_{Q_p(I)}^p
 &=\sum_{N\geq1}N^{5-p/2}
       \|P_Nv\|_{L^p(I\times\mathbb T^3)}^p,
       \qquad p\in\{p_0,p_1\},\label{det:Qp}\\
 \|v\|_{Q(I)}&=\|v\|_{Q_{p_0}(I)}+\|v\|_{Q_{p_1}(I)},
 \qquad
 \|v\|_{M(I)}=\|v\|_{X^1(I)}^{1/2}\|v\|_{Q(I)}^{1/2},\label{det:QM}\\
 \|f\|_{Z(I)}^2
 &=\sum_{N\geq1}N^2
       \|P_Nf\|_{L^{82}(I\times\mathbb T^3)}^2.
       \label{det:Z}
\end{align}
The quantities $Q_p(I)$, $Q(I)$, and $M(I)$ will be evaluated on
$v\in X^1(I)$; $\|v\|_{M(I)}$ is an auxiliary homogeneous quantity.
We define
\[
 Z(I)=\{f\in L^{82}(I;H^1):\|f\|_{Z(I)}<\infty\},
\]
with functions identified almost everywhere. For an interval $J$,
$f\in Z_{\mathrm{loc}}(J)$ means that $f|_I\in Z(I)$ for every
compact subinterval $I\subset J$.
All sums over $N$ are dyadic. On intervals of length at most
one, $Q$ is the weak critical norm denoted by $Z$ in
\cite{IP2012} and by $\widetilde X^1$ in \cite{LOT2025}; our $Z$
is instead the perturbation norm of \cite[(3.3)]{LOT2025}.
The exponent $82$ is $2p_0/(p_0-4)$.

The periodic frequency-localized Strichartz estimates in
\cite[Section 3]{HTT2011}, transferred to the atomic spaces, imply
\begin{equation}\label{det:Qembed}
 \|v\|_{Q(I)}\leq C_Q\|v\|_{X^1(I)},\qquad |I|\leq1.
\end{equation}
Indeed, for $p\in\{p_0,p_1\}$ the frequency-localized estimate is
\[
 \|P_Nv\|_{L^p(I\times\mathbb T^3)}
 \lesssim N^{3/2-5/p}\|P_Nv\|_{X^0(I)}.
\]
Since $5-p/2+p(3/2-5/p)=p$, one obtains
\begin{align*}
 \|v\|_{Q_p(I)}
 &\lesssim\left(\sum_N(N\|P_Nv\|_{X^0(I)})^p\right)^{1/p}\\
 &\leq\left(\sum_NN^2\|P_Nv\|_{X^0(I)}^2\right)^{1/2}
 \lesssim\|v\|_{X^1(I)}.
\end{align*}
The last bound follows by taking any whole-line extension of
$v$, using the finite overlap of the Fourier supports, and
then taking the infimum over extensions. Summing over the
two values of $p$ gives \eqref{det:Qembed}.

\begin{lemma}
\label{det:subdivision}
Let $v\in X^1(I)$, where $|I|\leq1$, and let $\gamma>0$.
There is a partition $I=\bigcup_{j=1}^J I_j$ such that
$\|v\|_{Q(I_j)}\leq\gamma$ and
\begin{equation}\label{det:partitioncount}
 J\leq1+\sum_{p\in\{p_0,p_1\}}
       \left(\frac{2\|v\|_{Q_p(I)}}{\gamma}\right)^p.
\end{equation}
The same assertion holds simultaneously for finitely many
functions, with the corresponding sums in the right-hand side.
A prescribed upper bound $\ell$ for the subinterval lengths may be
imposed by adding at most $\lceil |I|/\ell\rceil$ cuts.
\end{lemma}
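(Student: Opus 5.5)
The plan is a greedy partition driven by the absolute continuity of the $L^p$ integrals in time. For $p\in\{p_0,p_1\}$ and $t\in I=[a,b]$, set
\[
 \Phi_p(t)=\sum_{N\geq1}N^{5-p/2}\|P_Nv\|_{L^p([a,t]\times\mathbb T^3)}^p .
\]
Each summand is a nondecreasing, continuous function of $t$, because it is the integral over $[a,t]$ of an integrable function of time. The series converges at $t=b$: by \eqref{det:Qembed}, $\Phi_p(b)=\|v\|_{Q_p(I)}^p\le (C_Q\|v\|_{X^1(I)})^p<\infty$. Hence $\Phi_p$ is continuous and nondecreasing on $I$, by uniform convergence of the tails. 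The same argument gives the additivity that will be used: for $t<t'$,
\[
 \|v\|_{Q_p([t,t'])}^p=\Phi_p(t')-\Phi_p(t).
\]
This holds because the integrand is additive over disjoint time intervals.

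The partition is built from levels of $\Phi=\sum_p\gamma_p^{-p}\Phi_p$. Choose $\gamma_p=\gamma/2$ for both exponents. Then it suffices to arrange $\|v\|_{Q_p(I_j)}\le\gamma/2$ for each $p$, since $Q=Q_{p_0}+Q_{p_1}$ then gives $\|v\|_{Q(I_j)}\leq\gamma$.

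I would cut $I$ at the points where $\Phi_{p_0}$ crosses the levels $k(\gamma/2)^{p_0}$, $k\ge1$, and also where $\Phi_{p_1}$ crosses the levels $k(\gamma/2)^{p_1}$. Concretely, take $t_k^{(p)}=\inf\{t:\Phi_p(t)\ge k(\gamma/2)^{p}\}$; continuity of $\Phi_p$ makes $\Phi_p(t_k^{(p)})=k(\gamma/2)^{p}$. The number of cuts coming from $p$ is at most
\[
 \Phi_p(b)(\gamma/2)^{-p}=\bigl(2\|v\|_{Q_p(I)}/\gamma\bigr)^p .
\]
Taking the union of both families of cuts, the number of intervals is at most one plus the total number of cuts, which is \eqref{det:partitioncount}.

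On each resulting subinterval, every $\Phi_p$ increases by at most $(\gamma/2)^p$, since no level is crossed in its interior. The additivity identity then gives $\|v\|_{Q_p(I_j)}\le\gamma/2$. Any degenerate intervals produced by coincident cuts are simply discarded. Two further points are routine. Adding more cuts only refines the partition and cannot increase any $\Phi_p$-increment, so the extra cuts for finitely many functions, or for a length bound $\ell$, preserve the smallness. In the finitely-many-functions case one adds the corresponding counts. The length bound is imposed with the equally spaced cuts $a+k\ell$, of which there are at most $\lceil|I|/\ell\rceil$.

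The main obstacle I expect is justifying continuity and additivity of $\Phi_p$ as a function of the endpoint. This requires that $\|P_Nv\|_{L^p(J\times\mathbb T^3)}$ be computed with the restriction of one fixed representative, so that it does not depend on an extension. That holds here because $Q_p$ is defined directly by Lebesgue integrals on $J\times\mathbb T^3$. It also requires summability at $t=b$, which is exactly what \eqref{det:Qembed} provides when $|I|\le1$.
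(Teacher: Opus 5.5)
Your argument is correct and is essentially the paper's. Both rest on the continuity and additivity in time of the finite, absolutely continuous measures $E\mapsto\sum_N N^{5-p/2}\int_{E\times\mathbb T^3}|P_Nv|^p$, whose finiteness comes from \eqref{det:Qembed}; the only difference is in how the cuts are placed. You cut at the fixed level crossings $k(\gamma/2)^p$ of each cumulative function and take the union, counting cuts per exponent, while the paper restarts greedily at the first time any one of the measures reaches $(\gamma/2)^p$ and charges each nonterminal interval to the pair attaining equality, which yields the same bound.
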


\begin{proof}
Write $I=[a,b]$ and let $v_1,\ldots,v_L\in X^1(I)$, where
$L\geq1$. For $1\leq r\leq L$, $p\in\{p_0,p_1\}$, and a
Borel set $E\subset I$, define
\[
 \mu_{r,p}(E)=\sum_NN^{5-p/2}
       \int_{E\times\mathbb T^3}|P_Nv_r(t,x)|^p\,dx\,dt.
\]
These are finite measures absolutely continuous with respect
to time Lebesgue measure, with
$\mu_{r,p}(I)=\|v_r\|_{Q_p(I)}^p$.
Starting at $t_0=a$, choose $t_j$ to be the first time after
$t_{j-1}$ at which one of the quantities
$\mu_{r,p}([t_{j-1},t])$ reaches $(\gamma/2)^p$, or set
$t_j=b$ if no such time occurs before $b$. Continuity of these
quantities gives
\[
 \mu_{r,p}([t_{j-1},t_j])\leq(\gamma/2)^p
 \quad\hbox{for every }r,p,j.
\]
For each nonterminal interval choose one pair $(r,p)$ for
which equality holds, and let $n_{r,p}$ be the number of
intervals assigned to that pair. Their interiors are disjoint,
and the endpoints have zero measure. Hence
\begin{align*}
 n_{r,p}(\gamma/2)^p&\leq\mu_{r,p}(I),\\
 J-1&=\sum_{r=1}^L\sum_{p\in\{p_0,p_1\}}n_{r,p}\\
 &\leq\sum_{r=1}^L\sum_{p\in\{p_0,p_1\}}
       \left(\frac{2\|v_r\|_{Q_p(I)}}{\gamma}\right)^p.
\end{align*}
In particular, the construction terminates after finitely
many intervals, and on each interval $I_j=[t_{j-1},t_j]$,
\[
 \|v_r\|_{Q(I_j)}
 =\sum_{p\in\{p_0,p_1\}}\mu_{r,p}(I_j)^{1/p}
 \leq\gamma.
\]
Taking $L=1$ gives \eqref{det:partitioncount}.
To impose a length bound $\ell>0$, insert the grid points
$a+k\ell$ lying in $(a,b)$. There are at most
$\lceil|I|/\ell\rceil$ such points, and each resulting interval
$J'\subset I_j$ satisfies
\[
 \|v_r\|_{Q_p(J')}^p=\mu_{r,p}(J')
 \leq\mu_{r,p}(I_j),
 \qquad \|v_r\|_{Q(J')}\leq\|v_r\|_{Q(I_j)}\leq\gamma.
\]
This completes the proof.
\end{proof}

\subsection{The deterministic inputs and the perturbation norm}
\label{det:inputs}

We use the following two deterministic estimates.

\begin{proposition}\label{det:unforced}
For every $R\geq1$ there is a finite nondecreasing constant
$B_0(R)$ such that the solution of
\[
 i\partial_tw+\Delta w=\mathcal N(w),\qquad
 w(a)=w_a,\qquad \|w_a\|_{H^1}\leq R,
\]
exists globally, conserves mass and energy, and satisfies
\begin{equation}\label{det:unforcedbound}
 \|w\|_{X^1([a,a+1])}\leq B_0(R).
\end{equation}
The bound is independent of $a$ and of the particular datum in
the indicated ball.
\end{proposition}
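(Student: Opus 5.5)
This is essentially the Ionescu--Pausader theorem \cite{IP2012}, recast in the atomic space $X^1$ used here. The plan is to import their result and then upgrade it, in two stages, to a bound that is uniform in the datum and independent of the base time $a$.

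\textbf{Step 1: the deterministic theorem.} By \cite{IP2012}, for every $w_a\in H^1$ the defocusing quintic equation has a unique global solution $w\in X^1_{\mathrm{loc}}$, and this solution conserves mass and energy. Their main theorem also gives a bound on the critical norm on unit intervals,
\[
 \|w\|_{Q([a,a+1])}\le C(E(w_a),M(w_a)).
\]
The constant depends only on the conserved quantities, and these are bounded by a function of $\|w_a\|_{H^1}$ via Sobolev, $\|w_a\|_6\lesssim\|w_a\|_{H^1}$. Time-translation invariance of the equation removes any dependence on $a$. It therefore suffices to take $a=0$.

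\textbf{Step 2: upgrade to the $X^1$ norm.} The local theory of \cite{HTT2011,IP2012} has two ingredients: the quintic estimate
\[
 \|\mathcal N(w)\|_{N^1(I)}\lesssim\|w\|_{X^1(I)}\|w\|_{Q(I)}^4,
\]
and the linear bound \eqref{det:linear}. Together they give the following. If $\|w\|_{Q(I)}\le\gamma$ for a small absolute $\gamma$, then
\[
 \|w\|_{X^1(I)}\le 2\|w(t_I)\|_{H^1},
\]
where $t_I$ is the left endpoint of $I$. This follows from a continuity argument in the right endpoint of $I$, using that the $X^1$ norm depends continuously on that endpoint.

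Apply Lemma~\ref{det:subdivision} on $[0,1]$ with this $\gamma$. It produces at most
\[
 J\le 1+\sum_p\bigl(2C(R)/\gamma\bigr)^p
\]
intervals, each with small $Q$-norm. On each interval, energy conservation gives $\|w(t_{j-1})\|_{H^1}\lesssim F(w_a)^{1/2}$. Lemma~\ref{det:concatenation} then yields
\[
 \|w\|_{X^1([0,1])}\le 4J\cdot 2\sup_t\|w(t)\|_{H^1}.
\]
This is a finite bound depending only on $R$. Define $B_0(R)$ as the supremum of $\|w\|_{X^1([0,1])}$ over data in the ball of radius $R$. By the above it is finite, and it is nondecreasing in $R$ because the balls are nested.

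\textbf{Main obstacle.} The delicate point is that the $Q$ bound in \cite{IP2012} must depend only on the energy and not on the profile of the datum. Their argument by contradiction via profile decomposition does give exactly such a uniform bound: their main theorem is a uniform bound on $\|w\|_{Z([0,1])}$ in terms of $E(w_a)$ for defocusing data, and their $Z$ is our $Q$ on unit intervals. I would cite that form directly rather than reprove it.

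A second, minor check is that their $Z$ norm matches our $Q$ with the exponents $p_0,p_1$ and the normalization of the torus. This identification is noted after \eqref{det:Z} and is taken as given.
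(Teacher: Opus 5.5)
Your Step 1 matches the paper. It bounds mass and energy by $R$ via Sobolev, then invokes the uniform weak-norm supremum of \cite{IP2012}. That supremum is the Section 6 statement (Theorem 6.1) rather than the main theorem proper.

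Your Step 2 takes a different route. The paper cites \cite[Proposition 3.3(ii)]{IP2012} for the bound of the strong norm by a function of the energy and the weak norm. It then spends its effort transferring that open-interval restriction bound to the closed-interval whole-line-extension convention of \eqref{det:Xrestriction}. You instead reprove the upgrade inside the paper's framework, from the quintic estimate, Lemma~\ref{det:subdivision} and Lemma~\ref{det:concatenation]}. This is more self-contained and gives an explicit $B_0(R)$.

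One correction is needed. The available estimate is \eqref{det:pure}, namely $\|\mathcal N(w)\|_{N^1(I)}\lesssim\|w\|_{X^1(I)}^3\|w\|_{Q(I)}^2$, not $\|w\|_{X^1(I)}\|w\|_{Q(I)}^4$. On the torus the weak norm enters only through its interpolation $M(I)$ with $X^1$. Your bootstrap inequality is therefore $x\le\|w(t_I)\|_{H^1}+C\gamma^2x^3$, with $x=\|w\|_{X^1(I)}$. It closes at $x\le2\|w(t_I)\|_{H^1}$ only if $\gamma\lesssim(\sup_t\|w(t)\|_{H^1})^{-1}$. So $\gamma$ cannot be absolute; it must depend on $R$. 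Conservation bounds $\|w(t)\|_{H^1}$ in terms of $R$, so the subdivision count $1+\sum_p(2C(R)/\gamma(R))^p$ is still finite and depends only on $R$. Your conclusion survives.

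Be aware also of what the continuity argument presupposes. It needs $w$ to belong to the paper's $X^1([t_I,t])$ a priori, and it needs $t\mapsto\|w\|_{X^1([t_I,t])}$ to be continuous. Both are the convention issues that the paper's transfer paragraph handles. Your route therefore relocates that technical point rather than avoiding it.
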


\begin{proof}
The Sobolev embedding and the definitions of mass and energy give
\[
 \|w_a\|_2^2\leq R^2,\qquad
 \frac12\|\nabla w_a\|_2^2+\frac16\|w_a\|_6^6
 \lesssim R^2+R^6.
\]
We apply the global theorem of Ionescu and Pausader
\cite{IP2012} at these bounds. Their
Section 6 defines a supremum of the weak critical norm over all
solutions of bounded energy on intervals of bounded length;
Theorem 6.1 makes this supremum finite. Proposition 3.3(ii)
bounds the strong restriction norm by a function of the
conserved energy and the weak critical norm. Since both are
bounded in terms of $R$, the resulting strong-norm bound
depends only on $R$.

The unforced solution is continuous in $H^1$
on a neighborhood of the unit interval. Given an atomic extension
of its restriction to the open interval, truncate that extension
to the interval and attach the free evolutions of its two endpoint
traces, with a continuous cutoff before the left endpoint. The
construction in Lemma~\ref{det:concatenation} gives a continuous
whole-line extension with an absolute multiple of the original
norm. The endpoint norms are bounded by the same norm by taking
one-sided limits. This transfers the cited open-interval bound
to the restriction convention used here. Time translation leaves
all these bounds unchanged. Taking the supremum of the
resulting bounds over initial norms at most $R$ gives a finite
nondecreasing choice of $B_0(R)$. Mass and energy conservation
are part of the cited global theorem. This completes the proof.
\end{proof}

For intervals longer than one, the convention in \cite{IP2012}
takes a supremum over restriction norms on subintervals of length
at most one. We use the whole-interval extension norm
\eqref{det:Xrestriction}, as in \cite[(2.4)]{LOT2025}.
Longer-interval estimates follow from \eqref{det:unforcedbound}
and Lemma~\ref{det:concatenation}, with constants that may depend
on the interval length.

\begin{proposition}\label{det:nonlinearinput}
For $\theta=16/369$ and every interval $I$ with $|I|\leq1$,
the following estimates hold with implicit constants independent of $I$:
\begin{align}
 \|\mathcal N(v)\|_{N^1(I)}
 &\lesssim\|v\|_{X^1(I)}\|v\|_{M(I)}^4
  =\|v\|_{X^1(I)}^3\|v\|_{Q(I)}^2,
 \label{det:pure}\\
 \|E(v,f)\|_{N^1(I)}
 &\lesssim |I|^\theta
 \left(\|v\|_{X^1(I)}^2\|v\|_{M(I)}^2\|f\|_{Z(I)}
 +\sum_{k=2}^5\|v\|_{X^1(I)}^{5-k}\|f\|_{Z(I)}^k\right),
 \label{det:mixed}
\end{align}
where $E(v,f)=\mathcal N(v+f)-\mathcal N(v)$.
The first estimate holds for $v\in X^1(I)$; the second initially
holds for such $v$ and smooth $f$, and extends to all $f\in Z(I)$.
\end{proposition}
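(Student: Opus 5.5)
This proposition is imported rather than proved from scratch: both estimates are the nonlinear estimates of Herr--Tataru--Tzvetkov and Ionescu--Pausader as packaged by Li--Okamoto--Tao, and the plan is to check that their statements match our norms and conventions.

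\textbf{The pure estimate \eqref{det:pure}.} I would invoke the quintic estimate of \cite[Lemma 3.2]{IP2012}, equivalently \cite[Proposition 2.10]{LOT2025}. It bounds $\|\mathcal N(v)\|_{N^1(I)}$ by a sum over frequency-ordered products. The two highest frequencies carry $X^1$ norms and the three lower ones carry $\|v\|_{X^1}^{1/2}\|v\|_{Q}^{1/2}$. This yields $\|v\|_{X^1}^3\|v\|_{Q}^2$ after using $\|v\|_{Q}\lesssim\|v\|_{X^1}$ from \eqref{det:Qembed} to absorb unbalanced factors.

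The identity $\|v\|_{X^1}\|v\|_{M}^4=\|v\|_{X^1}^3\|v\|_Q^2$ is immediate from \eqref{det:QM}. The only checks are conventions:
\begin{enumerate}[label=(\alph*),leftmargin=*]
\item On $|I|\le1$ our $Q$ coincides with their weak critical norm, as noted after \eqref{det:Z}.
\item Our $N^1(I)$ is defined through the Duhamel integral in the restriction space $X^1(I)$. I would reconcile this with their dual-norm definition by the duality $(U^2)^*=V^2$ of \cite{HHK2009}, or simply cite \cite[Proposition 2.7]{LOT2025}, which uses the same restriction convention.
\item The interval-restriction convention is handled by the extension argument already used in the proof of Proposition~\ref{det:unforced}.
\end{enumerate}

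\textbf{The mixed estimate \eqref{det:mixed}.} I would expand $E(v,f)$ as the sum of monomials of degree five in $(v,\bar v,f,\bar f)$ with $k\ge1$ factors of $f$. For each $k$ I would apply the mixed multilinear estimate of \cite[Lemma 3.3--3.4]{LOT2025}. In that estimate every $f$ factor is placed in $L^{82}$ at its frequency with one derivative, which is the $Z$ norm \eqref{det:Z}. The $v$ factors are placed in Strichartz norms controlled by $X^1$ and $Q$. H\"older in time on $I$ produces the gain $|I|^\theta$, since $82=2p_0/(p_0-4)$ leaves a spare time exponent; $\theta=16/369$ is their value.

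The case $k=1$ keeps two weak factors, giving $\|v\|_{X^1}^2\|v\|_M^2\|f\|_Z$. For $k\ge2$ one can crudely bound the $Q$ norms by $X^1$ through \eqref{det:Qembed}.

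\textbf{Extension to all $f\in Z(I)$.} First establish the estimate for smooth $f$. For general $f\in Z(I)$, approximate by smooth $f_n\to f$ in $Z(I)$, using frequency truncation and temporal mollification, which converge in each $L^{82}$ piece. Since $E(v,\cdot)$ is a polynomial, the multilinear bounds applied to differences show that $\mathcal I_aE(v,f_n)$ is Cauchy in $X^1(I)$. Meanwhile $E(v,f_n)\to E(v,f)$ in $L^1(I;H^{-1})$, because $Z\subset L^{82}H^1\subset L^{5}L^6$ controls the quintic terms via $H^1\hookrightarrow L^6$. The limits therefore agree, and the bound passes to the limit.

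\textbf{Main obstacle.} The genuine difficulty is bookkeeping: verifying that the cited lemmas, stated with their own restriction-space conventions and possibly homogeneous versus inhomogeneous frequency weights, give exactly our normalizations with constants uniform in $I$. I would address this first by checking the conventions list in \cite[Section 2]{LOT2025} against \eqref{det:Xrestriction}--\eqref{det:Z}.
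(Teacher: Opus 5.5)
Your overall route is the paper's. You cite \cite[Lemma 3.2]{IP2012} for the pure estimate and the mixed cases of \cite[Proposition 3.1]{LOT2025} for $E(v,f)$, extract a power of $|I|$, and pass from smooth $f$ to $Z(I)$ by density with identification in $L^1(I;H^{-1})$. The gap is in the one step the paper actually proves rather than quotes: the time gain.

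You place every $f$ factor in $L^{82}$ and claim H\"older then yields $|I|^\theta$ because $82$ leaves a spare exponent. It does not. The exponent $82=2p_0/(p_0-4)$ is characterized by $2/p_0+1/82=1/2$, so it is the exact H\"older partner of two $L^{p_0}$ factors. If $f$ already sits in $L^{82}$, there is no slack left, and $16/369$ cannot be obtained this way.

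The missing input is the following. Write $E_k$ for the monomials of $E(v,f)$ with exactly $k$ factors $f$ or $\overline f$. The mixed cases $k=1,\dots,4$ from \cite{LOT2025} hold with $f$ in the weaker norm $\|f\|_{\widetilde Z(I)}^2=\sum_NN^2\|P_Nf\|_{L^{18}(I\times\mathbb T^3)}^2$:
\[
\|E_1\|_{N^1(I)}\lesssim\|v\|_{X^1(I)}^2\|v\|_{M(I)}^2\|f\|_{\widetilde Z(I)},\qquad
\|E_k\|_{N^1(I)}\lesssim\|v\|_{X^1(I)}^{5-k}\|f\|_{\widetilde Z(I)}^k .
\]
H\"older on $I\times\mathbb T^3$, with normalized spatial measure, gives $\|f\|_{\widetilde Z(I)}\le|I|^{1/18-1/82}\|f\|_{Z(I)}$, and $1/18-1/82=16/369$. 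For $k\ge2$ one then uses $|I|^{k\theta}\le|I|^\theta$.

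There are three smaller points.
\begin{enumerate}
\item The case $k=5$, $E_5(v,f)=\mathcal N(f)$, has no $v$ factor, so it is not covered by the mixed estimate you describe. The paper treats it by hand through \eqref{det:L1source}. The bound $\|f\|_{L^{82}(I;W^{1,82})}\lesssim\|f\|_{Z(I)}$ and $W^{1,82}\hookrightarrow L^\infty\cap H^1$ give $\|\mathcal N(f)\|_{N^1(I)}\lesssim\|f\|_{L^5(I;H^1)}\|f\|_{L^5(I;L^\infty)}^4\lesssim|I|^{77/82}\|f\|_{Z(I)}^5$, and $77/82>\theta$.
\item In the pure estimate, your account of the cited lemma does not work as stated. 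Two $X^1$ factors and three factors $\|v\|_{X^1}^{1/2}\|v\|_Q^{1/2}$ produce $\|v\|_{X^1}^{7/2}\|v\|_Q^{3/2}$. The embedding \eqref{det:Qembed} cannot convert this into $\|v\|_{X^1}^3\|v\|_Q^2$, since replacing $Q$ by $X^1$ only weakens a bound. The lemma actually has one $X^1$ factor and four factors of $M$ type, so its diagonal case is \eqref{det:pure} directly, with no absorption.
\item Your density step uses off-diagonal multilinear bounds, which is fine if those are available. The paper instead derives the difference estimates from the diagonal ones via Lemma~\ref{det:polynomial}.
\end{enumerate}
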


\begin{proof}
The pure estimate is the diagonal form of
\cite[Lemma 3.2]{IP2012}. The mixed estimate is the part containing
at least one perturbation factor in the proof of
\cite[Proposition 3.1]{LOT2025}; the same proposition includes
the pure term. We record explicitly the time gain in the mixed
estimate. For smooth $f$, put
\[
 \|f\|_{\widetilde Z(I)}^2
  =\sum_{N\geq1}N^2\|P_Nf\|_{L^{18}(I\times\mathbb T^3)}^2,
 \qquad \theta_0=\frac1{18}-\frac1{82}=\frac{16}{369}.
\]
H\"older's inequality and the normalization of spatial measure give
\[
 \|f\|_{\widetilde Z(I)}\leq |I|^{\theta_0}\|f\|_{Z(I)}.
\]
Write $E(v,f)=\sum_{k=1}^5E_k(v,f)$, where $E_k$ collects the
monomials with exactly $k$ factors equal to $f$ or $\overline f$.
The four mixed cases in the proof of
\cite[Proposition 3.1, in particular (3.6)]{LOT2025} give
\begin{align*}
 \|E_1(v,f)\|_{N^1(I)}
 &\lesssim\|v\|_{X^1(I)}^2\|v\|_{M(I)}^2
                 \|f\|_{\widetilde Z(I)},\\
 \|E_k(v,f)\|_{N^1(I)}
 &\lesssim\|v\|_{X^1(I)}^{5-k}
                 \|f\|_{\widetilde Z(I)}^k,
 \qquad 2\leq k\leq4.
\end{align*}
The square-function estimate and Minkowski's inequality imply
\[
 \|f\|_{L^{82}(I;W^{1,82})}\lesssim\|f\|_{Z(I)}.
\]
The embeddings $W^{1,82}\hookrightarrow L^\infty$ and
$W^{1,82}\hookrightarrow H^1$ imply
\[
 \|\mathcal N(f)(t)\|_{H^1}
 \lesssim\|f(t)\|_{L^\infty}^4\|f(t)\|_{H^1}
 \lesssim\|f(t)\|_{W^{1,82}}^5,
\]
where the gradient term follows from
$|\nabla(|f|^4f)|\leq5|f|^4|\nabla f|$.
Thus \eqref{det:L1source} and H\"older's inequality yield
\begin{align*}
 \|E_5(v,f)\|_{N^1(I)}
 &=\|\mathcal N(f)\|_{N^1(I)}\\
 &\lesssim\|f\|_{L^5(I;H^1)}
           \|f\|_{L^5(I;L^\infty)}^4
 \lesssim |I|^{77/82}\|f\|_{Z(I)}^5.
\end{align*}
Since $|I|\leq1$, $k\theta_0\geq\theta_0$ for $k\geq1$, and
$77/82>\theta_0$, these bounds prove \eqref{det:mixed}
with $\theta=\theta_0$ for smooth $f$.
The extension to general $f\in Z(I)$ follows from the density
and difference estimates in Lemmas~\ref{det:H3toZ} and
\ref{det:differencebounds}; Lemma~\ref{det:measurable}
identifies the limit with the $H^{-1}$-valued Duhamel integral.
The proofs of these lemmas use only the estimates for smooth
$f$ established above. This completes the proof.
\end{proof}

\begin{lemma}
\label{det:H3toZ}
Let $I$ be finite. If $f\in L^{82}(I;H^3)$ is strongly
measurable, then
\begin{equation}\label{det:H3general}
 \|f\|_{Z(I)}\lesssim\|f\|_{L^{82}(I;H^3)}.
\end{equation}
In particular, if $f\in L^\infty(I;H^3)$, then
\begin{equation}\label{det:H3bound}
 \|f\|_{Z(I)}\lesssim |I|^{1/82}
                   \|f\|_{L^\infty(I;H^3)}.
\end{equation}
The same bound holds for the difference of two such functions.
Furthermore,
\begin{equation}\label{det:ZtoH1}
 \|f\|_{L^{82}(I;H^1)}\lesssim\|f\|_{Z(I)}.
\end{equation}
Smooth functions with finite spatial Fourier support are dense
in $Z(I)$.
\end{lemma}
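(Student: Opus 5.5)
The main tool is a pointwise-in-time Sobolev embedding on each dyadic block, which is then summed over $N$ and integrated in time. There are four assertions: \eqref{det:H3general}, its consequence \eqref{det:H3bound}, the reverse bound \eqref{det:ZtoH1}, and density.

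For \eqref{det:H3general}, fix $N$ and $t$. In three dimensions $H^2\hookrightarrow L^\infty$, and interpolating with $L^2$ gives $\|P_Nf(t)\|_{L^{82}}\lesssim\|P_Nf(t)\|_{H^{2}}$; Bernstein's inequality on the torus gives the same bound directly for each block. Hence
\[
 N\|P_Nf(t)\|_{L^{82}}\lesssim N\cdot N^{2}\|P_Nf(t)\|_{L^2}\cdot N^{-1}\lesssim N^{-1}\|P_Nf(t)\|_{H^3},
\]
up to harmless powers. The surplus factor $N^{-1}$ makes the dyadic sum absolutely convergent. Taking the $L^{82}_t$ norm, we get
\[
 \|f\|_{Z(I)}^2=\sum_N N^2\|P_Nf\|_{L^{82}_{t,x}}^2
 \lesssim\sum_N N^{-2}\|P_Nf\|_{L^{82}(I;H^3)}^2
 \lesssim\|f\|_{L^{82}(I;H^3)}^2 ,
\]
using $\|P_Nf(t)\|_{H^3}\le\|f(t)\|_{H^3}$ and $\sum_N N^{-2}<\infty$. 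Strong measurability of $f$ ensures that each $P_Nf$ is jointly measurable on $I\times\mathbb T^3$, so all the norms are defined.

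Estimate \eqref{det:H3bound} follows from \eqref{det:H3general} by H\"older's inequality in time, which produces the factor $|I|^{1/82}$. Both bounds apply to differences by linearity.

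For \eqref{det:ZtoH1}, the normalized measure gives $\|P_Nf(t)\|_{L^2}\le\|P_Nf(t)\|_{L^{82}}$. Finite overlap then yields
\[
 \|f(t)\|_{H^1}^2\lesssim\sum_N N^2\|P_Nf(t)\|_{L^{82}}^2 .
\]
To move the $L^{82}_t$ norm inside the square sum, apply Minkowski's inequality in $L^{41}_t$ to the functions $N^2\|P_Nf(t)\|_{L^{82}}^2$, which is valid since $41\ge1$:
\[
 \Bigl\|\sum_N N^2\|P_Nf\|_{L^{82}_x}^2\Bigr\|_{L^{41}_t}\le\sum_N N^2\|P_Nf\|_{L^{82}_{t,x}}^2=\|f\|_{Z(I)}^2 .
\]
Taking square roots gives $\|f\|_{L^{82}(I;H^1)}\lesssim\|f\|_{Z(I)}$.

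For density, the main obstacle is checking that the truncations converge in the $Z$ norm itself rather than only in a weaker one. Given $f\in Z(I)$, first truncate in frequency with $f_K=\sum_{N\le K}P_Nf$. Since each $P_N$ overlaps only neighbouring blocks, $\|f-f_K\|_{Z}^2\lesssim\sum_{N\gtrsim K}N^2\|P_Nf\|_{L^{82}}^2$, which tends to $0$ as the tail of a convergent series. It then remains to approximate $f_K$, whose spatial Fourier support is a fixed finite set. On that support all $L^p_x$ norms are equivalent, so the $Z$ norm of $f_K$ is equivalent to its $L^{82}(I;\mathbb C^{\#})$ norm, where $\#$ is the number of Fourier modes. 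Each coefficient $\widehat f(t,\xi)$ lies in $L^{82}(I)$ and can be approximated there by smooth functions of time, for example by mollification after extension by zero. The resulting functions are smooth with finite spatial Fourier support, which completes the plan.
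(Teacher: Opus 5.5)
Your route is the same as the paper's: Bernstein with a dyadic surplus, H\"older in time, Littlewood--Paley plus Minkowski in $L^{41}_t$ and $L^{82}_x\subset L^2_x$, and frequency truncation followed by mollifying finitely many Fourier coefficients. Your $\sum_{N\le K}P_N$ is exactly the paper's multiplier $L_K$, by telescoping.

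The key computation for \eqref{det:H3general} is wrong as written. The bound $\|P_Nf(t)\|_{L^{82}}\lesssim\|P_Nf(t)\|_{H^2}\sim N^2\|P_Nf(t)\|_{L^2}$ only gives $N\|P_Nf(t)\|_{L^{82}}\lesssim N^3\|P_Nf(t)\|_{L^2}\sim\|P_Nf(t)\|_{H^3}$, with no decay at all. The extra factor $N^{-1}$ you insert has no source. Your displayed inequality is equivalent to $\|P_Nf\|_{L^{82}}\lesssim N\|P_Nf\|_{L^2}$, which is false: Bernstein's exponent $3(1/2-1/82)=60/41$ is sharp and exceeds $1$.

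The powers are not harmless here. Without decay you would need $\sum_N\|P_Nf\|_{L^{82}(I;H^3)}^2\lesssim\|f\|_{L^{82}(I;H^3)}^2$, and this fails because Minkowski goes the wrong way for $82>2$. For a counterexample, let $f(t)$ be a unit-$H^3$ single-frequency block at frequency $2^k$ on the $k$-th of $K$ disjoint time intervals. The left side then grows like $K^{40/41}$ times the right side.

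The fix is the sharp Bernstein inequality, or equivalently interpolating $L^{82}$ between $L^2$ and $L^\infty$ instead of bounding it by $L^\infty$:
\[
 N\|P_Nf(t)\|_{L^{82}}\lesssim N^{1+3(1/2-1/82)}\|P_Nf(t)\|_{L^2}\lesssim N^{-1/2-3/82}\|f(t)\|_{H^3}.
\]
The squared weights are then $N^{-1-6/82}$, which are summable over dyadic $N$; this is precisely the paper's computation.

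With this correction the remaining steps go through as you describe. In the density step, also state that the blocks $P_M$ (or the truncation multipliers) are uniformly bounded on $L^{82}_x$. That is what lets you bound $\|P_M(f-f_K)\|_{L^{82}}$ by the neighbouring $\|P_Nf\|_{L^{82}}$; finite overlap alone only tells you which terms appear.
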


\begin{proof}
Bernstein's inequality gives, for almost every $t$,
\begin{align*}
 N\|P_Nf(t)\|_{L^{82}_x}
 &\lesssim N^{1+3(1/2-1/82)}\|P_Nf(t)\|_2\\
 &\lesssim N^{-1/2-3/82}\|f(t)\|_{H^3}.
\end{align*}
Integrate in time, square, and sum over $N$ to obtain
\[
 \|f\|_{Z(I)}^2\lesssim
 \|f\|_{L^{82}(I;H^3)}^2
 \sum_{N\geq1}N^{-1-6/82}.
\]
The dyadic series converges, proving \eqref{det:H3general};
H\"older's inequality gives \eqref{det:H3bound}.

The Littlewood--Paley characterization of $H^1$, Minkowski's
inequality in $L^{41}(I)$, and the finite volume embedding
$L^{82}_x\subset L^2_x$ give
\[
 \|f\|_{L^{82}_tH^1_x}
 \lesssim\left(\sum_NN^2
              \|P_Nf\|_{L^{82}_tL^2_x}^2\right)^{1/2}
 \lesssim\|f\|_{Z(I)}.
\]
To prove density, let $L_M$ be the Fourier multiplier with
symbol $\rho(\xi/M)$, where $M$ tends to infinity through
dyadic integers. The multipliers $L_M$ are uniformly bounded
on $L^{82}_x$, commute with $P_N$, and satisfy
$(\mathrm{Id}-L_M)P_N=0$ for $2N\leq M$. It follows that
\[
 \|f-L_Mf\|_{Z(I)}^2
 \lesssim\sum_{N>M/2}N^2
            \|P_Nf\|_{L^{82}(I\times\mathbb T^3)}^2
 \longrightarrow0.
\]
For fixed $M$, write
$L_Mf=\sum_{|\xi|<2M}c_\xi(t)e^{i\xi\cdot x}$.
Each $c_\xi$ belongs to $L^{82}(I)$. Choose smooth
$c_{\xi,n}$ converging to $c_\xi$ in that space. Since there
are finitely many frequencies,
\[
 \left\|L_Mf-\sum_{|\xi|<2M}c_{\xi,n}(t)e^{i\xi\cdot x}
       \right\|_{Z(I)}
 \lesssim_M\sum_{|\xi|<2M}\|c_\xi-c_{\xi,n}\|_{L^{82}(I)}
 \longrightarrow0.
\]
Choose $M$ and then $n$ to obtain density in $Z(I)$.
If $f\in L^\infty(I;H^3)$, extend it by zero in time.
Choose $\varphi\in C_c^\infty(\mathbb R)$ with $\varphi\geq0$
and $\int_{\mathbb R}\varphi(t)\,dt=1$, and set
$\varphi_\varepsilon(t)=\varepsilon^{-1}\varphi(t/\varepsilon)$.
The bound $|\rho|\leq1$ and the triangle inequality in $H^3$ give
\[
 \|\varphi_\varepsilon*_t L_Mf\|_{L^\infty(\mathbb R;H^3)}
 \leq\|f\|_{L^\infty(I;H^3)}.
\]
For fixed $M$, time convolution converges in each of the
finitely many $L^{82}$ coefficient norms as
$\varepsilon\downarrow0$, proving the last assertion.
This completes the proof.
\end{proof}

Fix $C_Z\geq1$ so that \eqref{det:H3bound} holds with $C_Z$
as its implicit constant in the restart and stability estimates.

\subsection{Polynomial difference estimates and measurable forcing}
\label{det:differences}

The polynomial structure allows us to derive the required
difference estimates from the quintic bounds. We first record
a Banach-valued polynomial estimate.

\begin{lemma}
\label{det:polynomial}
For each integer $m\geq1$, every
polynomial $P:\mathbb R\to\mathcal B$ of degree at most $m$,
with values in a real Banach space $\mathcal B$, satisfies
\begin{equation}\label{det:polynomialbound}
 \|P'(0)\|_{\mathcal B}
 \lesssim_m r^{-1}\sup_{|t|\leq r}\|P(t)\|_{\mathcal B},
 \qquad r>0.
\end{equation}
Complex Banach spaces may be regarded as real ones in this
statement.
\end{lemma}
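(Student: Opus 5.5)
The plan is to reduce the estimate to the finite-dimensional Markov inequality by Lagrange interpolation, and then remove the dependence on $r$ by scaling. Write $P(t)=\sum_{k=0}^m b_kt^k$ with $b_k\in\mathcal B$, so that $P'(0)=b_1$.

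\emph{Reduction to $r=1$.} Set $Q(s)=P(rs)$. Then $Q'(0)=rP'(0)$ and $\sup_{|s|\le1}\|Q(s)\|_{\mathcal B}=\sup_{|t|\le r}\|P(t)\|_{\mathcal B}$, so it suffices to prove $\|P'(0)\|_{\mathcal B}\lesssim_m\sup_{|t|\le1}\|P(t)\|_{\mathcal B}$.

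\emph{Interpolation identity.} Fix $m+1$ distinct nodes in $[-1,1]$, for example $s_j=-1+2j/m$ for $0\le j\le m$, and let $\ell_j$ be the scalar Lagrange basis polynomials for these nodes. The identity
\[
 P(t)=\sum_{j=0}^m P(s_j)\,\ell_j(t)
\]
holds for every $\mathcal B$-valued polynomial of degree at most $m$. To see this, note that it holds coefficientwise, since each scalar monomial $t^k$ with $k\le m$ is reproduced by Lagrange interpolation. Alternatively, apply any functional $\phi\in\mathcal B^*$ and use Hahn--Banach to separate points. Differentiating at $0$ gives
\[
 P'(0)=\sum_{j=0}^m \ell_j'(0)\,P(s_j),
 \qquad
 \|P'(0)\|_{\mathcal B}\le\Bigl(\sum_{j=0}^m|\ell_j'(0)|\Bigr)\sup_{|t|\le1}\|P(t)\|_{\mathcal B}.
\]
The factor in parentheses depends only on $m$ and on the fixed choice of nodes.

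\emph{Complex scalars.} For a complex Banach space, restricting the scalars to $\mathbb R$ leaves the norm unchanged, and real-coefficient linear combinations are unaffected. So the argument applies verbatim.

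None of these steps presents a real obstacle. The only point needing care is the validity of the vector-valued interpolation identity, which is a purely algebraic consequence of the scalar case applied to each coefficient $b_k$.
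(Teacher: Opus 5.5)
Your proposal is correct and takes the same approach as the paper's proof: the scaling reduction via $Q(t)=P(rt)$, the equispaced nodes $-1+2j/m$, and the coefficientwise Lagrange interpolation identity differentiated at zero. The constant $\sum_j|\ell_j'(0)|$ depends only on $m$, exactly as in the paper.
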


\begin{proof}
First take $r=1$. Choose the nodes $t_j=-1+2j/m$,
$0\leq j\leq m$, and define their scalar Lagrange polynomials by
\[
 \ell_j(t)=\prod_{\substack{0\leq k\leq m\\k\ne j}}
             \frac{t-t_k}{t_j-t_k},\qquad 0\leq j\leq m.
\]
The identity
$P(t)=\sum_{j=0}^mP(t_j)\ell_j(t)$ holds coefficientwise in
$\mathcal B$. Therefore
\[
 \|P'(0)\|_{\mathcal B}
 \leq\sum_{j=0}^m|\ell_j'(0)|\,\|P(t_j)\|_{\mathcal B}
 \lesssim_m\sup_{|t|\leq1}\|P(t)\|_{\mathcal B}.
\]
For general $r>0$, apply this inequality to $Q(t)=P(rt)$
and use $Q'(0)=rP'(0)$. This completes the proof.
\end{proof}

\begin{lemma}\label{det:differencebounds}
Let $|I|\leq1$. There are fixed constants
$C_{\mathrm{nl}},C_{\mathrm{mix}}\geq1$ with the following properties.
\begin{enumerate}[label=\textup{(\roman*)}]
\item If $B\geq1$, $0<\eta\leq1$, and $v,w\in X^1(I)$ satisfy
\[
 \|v\|_{X^1(I)}+\|w\|_{X^1(I)}\leq B,
 \qquad \|v\|_{Q(I)}+\|w\|_{Q(I)}\leq\eta,
\]
then
\begin{equation}\label{det:puredifference}
 \|\mathcal N(v)-\mathcal N(w)\|_{N^1(I)}
 \leq C_{\mathrm{nl}}B^3\eta\|v-w\|_{X^1(I)}.
\end{equation}
\item If $B,K\geq0$, $v,w\in X^1(I)$, and $f,g\in Z(I)$ satisfy
\[
 \|v\|_{X^1(I)}+\|w\|_{X^1(I)}\leq B,
 \qquad\|f\|_{Z(I)}+\|g\|_{Z(I)}\leq K,
\]
then, for $P(B,K)=C_{\mathrm{mix}}(1+B+K)^5$,
\begin{align}
 \|E(v,f)\|_{N^1(I)}&\leq |I|^\theta P(B,K),
 \label{det:coarsemixed}\\
 \|E(v,f)-E(w,g)\|_{N^1(I)}
 &\leq |I|^\theta P(B,K)
       \bigl(\|v-w\|_{X^1(I)}+\|f-g\|_{Z(I)}\bigr).
 \label{det:mixeddifference}
\end{align}
The constants and the exponent $\theta$ are independent of $I$.
\end{enumerate}
\end{lemma}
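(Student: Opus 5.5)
The plan is to derive both difference bounds from the quintic bounds of Proposition~\ref{det:nonlinearinput}. The tool is the polynomial Lemma~\ref{det:polynomial}, applied in the Banach space $X^1(I)$ after composing with $\mathcal I_a$. Since $\mathcal N$ is a real-homogeneous quintic polynomial, for fixed $v,w$ the map
\[
 t\mapsto\mathcal N(w+t(v-w))
\]
is a polynomial of degree at most five with values in $N^1(I)$. Its derivative at $t=s$ is $D\mathcal N(w+s(v-w))[v-w]$, and
\[
 \mathcal N(v)-\mathcal N(w)=\int_0^1 D\mathcal N(w+s(v-w))[v-w]\,ds .
\]
It therefore suffices to bound $D\mathcal N(u)[h]$ for $u$ on the segment, with $\|u\|_{X^1}\le B$ and $\|u\|_{Q}\le\eta$. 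The segment bounds follow from convexity of both norms.

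For part (i), fix such $u$ and a direction $h=v-w$, and set $P(t)=\mathcal N(u+th)$. By \eqref{det:pure},
\[
 \|P(t)\|_{N^1}\lesssim\|u+th\|_{X^1}^3\|u+th\|_Q^2 .
\]
Using \eqref{det:Qembed}, $\|h\|_Q\le C_Q\|h\|_{X^1}$ and $\|h\|_Q\le 2\eta$. With $r=\eta/\|h\|_{X^1}$ (assuming $h\neq0$), for $|t|\le r$ we get $\|u+th\|_Q\lesssim\eta$ and $\|u+th\|_{X^1}\le B+\eta\lesssim B$, since $B\ge1\ge\eta$. Lemma~\ref{det:polynomial} then gives
\[
 \|D\mathcal N(u)[h]\|\lesssim r^{-1}B^3\eta^2=B^3\eta\|h\|_{X^1}.
\]
Integrating in $s$ yields \eqref{det:puredifference}. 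This is the step I expect to be the main obstacle. The difficulty is choosing $r$ so that both the $Q$-smallness and the $X^1$ size survive the perturbation. The choice works precisely because one factor of $\eta$ is lost to $r^{-1}$ while the two $Q$ factors supply $\eta^2$. For $|I|\le1$, the bound $\|h\|_Q\le 2\eta$ comes from the hypothesis directly, and \eqref{det:Qembed} is not even needed.

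For part (ii), the bound \eqref{det:coarsemixed} follows from \eqref{det:mixed} together with $\|v\|_{M}\le C_Q^{1/2}\|v\|_{X^1}$ by \eqref{det:Qembed}; every term is then $\lesssim(1+B+K)^5$. For \eqref{det:mixeddifference}, I apply the same polynomial device to the two-variable map
\[
 t\mapsto E(u+th,\,\phi+tk),\qquad h=v-w,\ k=f-g,
\]
which is polynomial of degree at most five. Here $(u,\phi)$ runs over the segment between $(w,g)$ and $(v,f)$, so $\|u\|_{X^1}\le B$ and $\|\phi\|_Z\le K$. Take
\[
 r=\bigl(\|h\|_{X^1}+\|k\|_Z\bigr)^{-1}.
\]
For $|t|\le r$ the perturbed arguments have norms at most $B+1$ and $K+1$. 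The coarse bound then gives $\sup\|\cdot\|\le|I|^\theta P(B+1,K+1)$, and Lemma~\ref{det:polynomial} produces the derivative bound with factor $r^{-1}=\|h\|_{X^1}+\|k\|_Z$. Since $P(B+1,K+1)\le 2^5P(B,K)$ after enlarging $C_{\mathrm{mix}}$, integration along the segment gives \eqref{det:mixeddifference}.

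Two points need checking. The first is that the Lagrange interpolation identity holds in $X^1(I)$ after applying $\mathcal I_a$; this is immediate because interpolation is a finite linear combination. The second is that, for smooth $f$, the derivative of $E$ along the segment is legitimately represented by these difference quotients. Both are routine. Since the proof of Proposition~\ref{det:nonlinearinput} uses this lemma for the extension to general $f$, I would argue first with smooth $f,g$ and then extend by density, using Lemma~\ref{det:H3toZ} and the difference bound itself.
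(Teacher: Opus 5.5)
Your proposal is correct and follows the paper's proof essentially step for step: the Lagrange-polynomial derivative bound along the segment with $r\sim\eta/\|h\|_{X^1(I)}$ in (i), then $r=(\|h\|_{X^1(I)}+\|k\|_{Z(I)})^{-1}$ combined with the coarse bound in (ii), first for smooth $f,g$ and then by density. In the density step you should also state what the paper does there, namely invoke Lemma~\ref{det:measurable} to identify the $X^1(I)$ limit with the $H^{-1}$-valued Duhamel integral.

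One caution: if your closing remark in (i) means that \eqref{det:Qembed} can be dispensed with, it is wrong. The hypothesis bound $\|h\|_{Q(I)}\le2\eta$ alone does not control $r\|h\|_{Q(I)}=\eta\|h\|_{Q(I)}/\|h\|_{X^1(I)}$ when $\|h\|_{X^1(I)}$ is small, so the embedding is genuinely needed, exactly as you used it one line earlier.
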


\begin{proof}
For the first assertion, put $h=v-w$ and
$z_\sigma=(1-\sigma)w+\sigma v$, $0\leq\sigma\leq1$.
If $h=0$, the two sides of \eqref{det:puredifference} vanish.
Assume $h\ne0$. The segment satisfies
$\|z_\sigma\|_{X^1(I)}\leq B$ and
$\|z_\sigma\|_{Q(I)}\leq\eta$. Take
\[
 r=\frac{\eta}{(1+C_Q)\|h\|_{X^1(I)}}.
\]
For $|t|\leq r$ we have $\|z_\sigma+th\|_{X^1(I)}\leq2B$ and
$\|z_\sigma+th\|_{Q(I)}\leq2\eta$. By \eqref{det:pure},
\[
 \sup_{|t|\leq r}
 \|\mathcal I_a\mathcal N(z_\sigma+th)\|_{X^1(I)}
 \lesssim B^3\eta^2.
\]
The expression inside the norm is a real polynomial in $t$ of
degree at most five. Indeed, $\mathcal N(z)=z^3\overline z^{2}$,
and $\mathcal I_a$ is real linear. The real directional derivative is
\[
 D\mathcal N(z)[h]=3|z|^4h+2z^3\overline z\,\overline h.
\]
Lemma~\ref{det:polynomial} therefore yields
\[
 \|\mathcal I_aD\mathcal N(z_\sigma)[h]\|_{X^1(I)}
 \lesssim B^3\eta\|h\|_{X^1(I)}.
\]
The fundamental theorem of calculus for this Banach-valued
polynomial gives
\[
 \mathcal I_a\bigl(\mathcal N(v)-\mathcal N(w)\bigr)
 =\int_0^1\mathcal I_aD\mathcal N(z_\sigma)[h]\,d\sigma.
\]
Taking the $X^1(I)$ norm and integrating the preceding bound
proves \eqref{det:puredifference}.

For the second assertion, \eqref{det:Qembed} gives
$\|v\|_{M(I)}\lesssim\|v\|_{X^1(I)}$. Consequently
\eqref{det:mixed} implies \eqref{det:coarsemixed}.
First suppose that $f,g$ are smooth. Let $h=v-w$, $k=f-g$, and
consider the segment $z_\sigma=(1-\sigma)w+\sigma v$,
$q_\sigma=(1-\sigma)g+\sigma f$.
If $d=\|h\|_{X^1(I)}+\|k\|_{Z(I)}>0$, apply
Lemma~\ref{det:polynomial} with $r=d^{-1}$ to
\[
 t\longmapsto\mathcal I_aE(z_\sigma+th,q_\sigma+tk).
\]
Its degree is at most five, and on $[-r,r]$ its two arguments
have norms at most $B+1$ and $K+1$. The coarse mixed estimate gives
\[
 \sup_{|t|\leq r}
 \|\mathcal I_aE(z_\sigma+th,q_\sigma+tk)\|_{X^1(I)}
 \lesssim |I|^\theta(1+B+K)^5.
\]
Its derivative at zero is
\[
 \mathcal I_a\bigl(D\mathcal N(z_\sigma+q_\sigma)[h+k]
                         -D\mathcal N(z_\sigma)[h]\bigr),
\]
whose $X^1(I)$ norm is bounded by $|I|^\theta P(B,K)d$
after fixing $C_{\mathrm{mix}}$ sufficiently large. Integrating
this derivative over $0\leq\sigma\leq1$ yields
\[
 \mathcal I_a\bigl(E(v,f)-E(w,g)\bigr)
 =\int_0^1\mathcal I_a\bigl(D\mathcal N(z_\sigma+q_\sigma)[h+k]
                      -D\mathcal N(z_\sigma)[h]\bigr)\,d\sigma,
\]
and hence \eqref{det:mixeddifference}. For $d=0$, both
arguments agree and the difference vanishes.

Finally, approximate arbitrary $f,g\in Z(I)$ by the smooth
functions from Lemma~\ref{det:H3toZ}. The estimate just proved
makes the corresponding mixed Duhamel terms Cauchy in $X^1(I)$.
Their limits give \eqref{det:coarsemixed} and
\eqref{det:mixeddifference} for all $f,g\in Z(I)$.
Lemma~\ref{det:measurable} identifies these limits with the
integral in \eqref{det:source}. This completes the proof.
\end{proof}

\begin{lemma}
\label{det:measurable}
For $v\in X^1(I)$ and strongly measurable $f\in Z(I)$,
$\mathcal N(v+f)$ belongs to $L^1(I;H^{-1})$. The Duhamel term
defined in this space agrees with the $X^1(I)$ representative
provided by Proposition~\ref{det:nonlinearinput} and
Lemma~\ref{det:differencebounds}. In particular it is continuous
in $H^1$ and vanishes at the left endpoint.
\end{lemma}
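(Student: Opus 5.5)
The proof has three steps: show $\mathcal N(v+f)\in L^1(I;H^{-1})$ directly, settle the smooth case, and pass to general $f$ by density.

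\emph{Integrability.} By \eqref{det:linear}, $v\in C(I;H^1)$ is bounded in $H^1$. By \eqref{det:ZtoH1}, $f\in L^{82}(I;H^1)\subset L^5(I;H^1)$, since $I$ is finite. The Sobolev embedding $H^1\hookrightarrow L^6$ and H\"older's inequality give
\[
 \|\mathcal N(w)\|_{L^{6/5}}\le\|w\|_{L^6}^5,
\]
and the dual embedding $L^{6/5}\hookrightarrow H^{-1}$ then yields
\[
 \|\mathcal N(v+f)(t)\|_{H^{-1}}\lesssim\|v(t)\|_{H^1}^5+\|f(t)\|_{H^1}^5 .
\]
This is integrable on $I$. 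Strong measurability of $t\mapsto\mathcal N(v(t)+f(t))$ in $H^{-1}$ holds because $\mathcal N:H^1\to H^{-1}$ is continuous (locally Lipschitz on bounded sets) and $v+f$ is strongly measurable in $H^1$. The Bochner integral $\mathcal I_a\mathcal N(v+f)$ is therefore defined in $C(I;H^{-1})$ and vanishes at $t=a$.

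\emph{Identification.} Write $\mathcal N(v+f)=\mathcal N(v)+E(v,f)$. For the pure term, \eqref{det:pure} already asserts that $\mathcal N(v)\in N^1(I)$, where the $N^1$ norm is by definition computed from the $H^{-1}$ integral, so nothing more is needed there. For smooth $f$ the mixed term is likewise the $H^{-1}$ integral, which \eqref{det:mixed} places in $X^1(I)$. For general $f$, choose smooth $f_n\to f$ in $Z(I)$ using Lemma~\ref{det:H3toZ}. Estimate \eqref{det:mixeddifference}, valid for smooth arguments, makes $\mathcal I_aE(v,f_n)$ Cauchy in $X^1(I)$. Its limit is the representative of Lemma~\ref{det:differencebounds}, and convergence in $X^1(I)$ implies convergence in $C(I;H^1)$, hence in $C(I;H^{-1})$.

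It remains to show that $\mathcal I_aE(v,f_n)\to\mathcal I_aE(v,f)$ in $C(I;H^{-1})$. Since $S(t-s)$ is unitary on $H^{-1}$,
\[
 \sup_{t\in I}\|\mathcal I_a(E(v,f_n)-E(v,f))(t)\|_{H^{-1}}
 \le\|\mathcal N(v+f_n)-\mathcal N(v+f)\|_{L^1(I;H^{-1})}.
\]
The pointwise bound
\[
 |\mathcal N(a)-\mathcal N(b)|\lesssim(|a|^4+|b|^4)|a-b|,
\]
combined with H\"older's inequality in $L^{6/5}_x$, the Sobolev embedding, and H\"older's inequality in time, bounds the right-hand side by
\[
 \bigl(\|v\|_{L^5H^1}+\|f\|_{L^5H^1}+\|f_n\|_{L^5H^1}\bigr)^4\,\|f_n-f\|_{L^5H^1}.
\]
By \eqref{det:ZtoH1} this is at most a constant times $\|f_n-f\|_{Z(I)}$, which tends to zero. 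The two limits coincide in $C(I;H^{-1})$, so the $H^{-1}$ Duhamel term equals the $X^1(I)$ representative. In particular it is continuous in $H^1$ and vanishes at $a$.

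The main obstacle is circularity: the extension in Proposition~\ref{det:nonlinearinput} cites this lemma. The argument above avoids it by using only \eqref{det:pure}, the smooth-$f$ estimates, \eqref{det:mixeddifference} for smooth $f$, and Lemma~\ref{det:H3toZ}, never the general-$f$ statements. A secondary point is that $H^{-1}$ Bochner integrability and $H^{-1}$ convergence must be checked by hand through the Sobolev and H\"older estimates, because the atomic bounds are available only in $N^1$.
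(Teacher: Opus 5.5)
Your argument follows the paper's route. You bound $\mathcal N(v+f)$ in $L^1(I;H^{-1})$ through $(|a|^4+|b|^4)|a-b|$ and $L^{6/5}\hookrightarrow H^{-1}$, approximate $f$ by smooth functions in $Z(I)$, get uniform $H^{-1}$ convergence of the Duhamel terms from unitarity, use the smooth-case difference estimate for the Cauchy property in $X^1(I)$, and identify the limits through $X^1(I)\hookrightarrow C(I;H^1)\hookrightarrow C(I;H^{-1})$. Splitting off $\mathcal N(v)$ and working in $L^5_t$ instead of $L^{82}_t$ are cosmetic differences, and you handle the circularity correctly.

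The gap is the case $|I|>1$. The lemma is stated for an arbitrary finite interval, but \eqref{det:pure}, \eqref{det:mixed} and \eqref{det:mixeddifference} are available only for $|I|\leq1$. As written, your identification step therefore proves the claim only on short intervals. The paper closes this by choosing nodes $a=t_0<\cdots<t_J=b$ with $t_j-t_{j-1}\leq1$. For $H=\mathcal N(v+f)$ and $t\in[t_{j-1},t_j]$ it uses the identity in $H^{-1}$
\[
 \mathcal I_aH(t)=S(t-t_{j-1})\,\mathcal I_aH(t_{j-1})+\mathcal I_{t_{j-1}}H(t).
\]
The second term lies in $X^1([t_{j-1},t_j])$ by the short-interval case, applied to the restrictions of $v$ and $f$. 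By induction, $\mathcal I_aH(t_{j-1})\in H^1$, so the first term lies there by \eqref{det:linear}. The pieces agree at the nodes, and Lemma~\ref{det:concatenation} then places the $H^{-1}$ Duhamel term in $X^1(I)$, continuous in $H^1$ and vanishing at $a$. You should add this subdivision and concatenation step to cover the full statement.
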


\begin{proof}
The scalar inequality
\[
 |\mathcal N(z)-\mathcal N(\widetilde z)|
 \lesssim (|z|^4+|\widetilde z|^4)|z-\widetilde z|
\]
and H\"older's inequality in space imply
\begin{align*}
 \|\mathcal N(u)-\mathcal N(\widetilde u)\|_{L^{6/5}}
 &\lesssim\bigl(\|u\|_6^4+\|\widetilde u\|_6^4\bigr)
                     \|u-\widetilde u\|_6.
\end{align*}
Using $H^1\hookrightarrow L^6$ and
$L^{6/5}\hookrightarrow H^{-1}$ gives
\begin{equation}\label{det:Hminusone}
 \|\mathcal N(u)-\mathcal N(\widetilde u)\|_{H^{-1}}
 \lesssim\bigl(\|u\|_{H^1}^4+\|\widetilde u\|_{H^1}^4\bigr)
               \|u-\widetilde u\|_{H^1}.
\end{equation}
The map $\mathcal N:H^1\to H^{-1}$ is continuous, so its
composition with the strongly measurable function $v+f$ is
strongly measurable. Moreover,
\[
 \|v+f\|_{L^{82}(I;H^1)}
 \leq |I|^{1/82}\|v\|_{C(I;H^1)}
                      +\|f\|_{L^{82}(I;H^1)}<\infty,
\]
and therefore
\[
 \|\mathcal N(v+f)\|_{L^1(I;H^{-1})}
 \lesssim |I|^{77/82}\|v+f\|_{L^{82}(I;H^1)}^5.
\]
Let $f_n\to f$ in $Z(I)$ be the smooth approximations of
Lemma~\ref{det:H3toZ}, and put $u_n=v+f_n$, $u=v+f$.
Then $u_n\to u$ in $L^{82}(I;H^1)$, and
\begin{align*}
 &\|\mathcal N(u_n)-\mathcal N(u)\|_{L^1(I;H^{-1})}\\
 &\quad\lesssim |I|^{77/82}
  \bigl(\|u_n\|_{L^{82}(I;H^1)}^4+
               \|u\|_{L^{82}(I;H^1)}^4\bigr)
                         \|f_n-f\|_{L^{82}(I;H^1)}
 \longrightarrow0.
\end{align*}
Unitarity of $S(t)$ on $H^{-1}$ gives
\[
 \sup_{a\leq t\leq b}
 \left\|\int_a^t S(t-s)
           [\mathcal N(u_n(s))-\mathcal N(u(s))]\,ds\right\|_{H^{-1}}
 \leq\|\mathcal N(u_n)-\mathcal N(u)\|_{L^1(I;H^{-1})}.
\]
For $|I|\leq1$, Lemma~\ref{det:differencebounds} also makes
these Duhamel terms Cauchy in $X^1(I)$. The embeddings
$X^1(I)\hookrightarrow C(I;H^1)\hookrightarrow C(I;H^{-1})$
identify the two limits. Their value at $a$ is zero.
For a longer interval, apply this argument on finitely many
pieces of length at most one, propagate the endpoint values
by \eqref{det:linear}, and concatenate using
Lemma~\ref{det:concatenation}. This completes the proof.
\end{proof}

\subsection{Comparison on a short interval}
\label{det:comparisonsection}

We record first the comparison estimate that will be used both in
the construction and in the approximation argument. All constants in
this subsection refer to the fixed torus and the fixed dyadic
decomposition. Choose $C_{\mathrm{mix}}$ large enough for both mixed
estimates in Lemma~\ref{det:differencebounds}, and keep
$C_{\mathrm{nl}}$, $C_{\mathrm{mix}}$, and the polynomial $P$ fixed
in the following parameter choices.

\begin{lemma}\label{det:comparison}
Let $I=[a,b]$, $|I|\leq1$, and let $v,\widetilde v\in X^1(I)$
solve \eqref{det:equation} with perturbations $f,g\in Z(I)$,
respectively. Suppose that, for $B\geq1$, $0<\eta\leq1$, and
$K\geq0$,
\begin{gather*}
 \|v\|_{X^1(I)}+\|\widetilde v\|_{X^1(I)}\leq B,
 \qquad \|v\|_{Q(I)}+\|\widetilde v\|_{Q(I)}\leq\eta,\\
 \|f\|_{Z(I)}+\|g\|_{Z(I)}\leq K.
\end{gather*}
If
\begin{equation}\label{det:comparisonconditions}
 C_{\mathrm{nl}}B^3\eta\leq\tfrac14,
 \qquad |I|^\theta P(B,K)\leq\tfrac14,
\end{equation}
then
\begin{align}
 \|v-\widetilde v\|_{X^1(I)}
 &\leq2\|v(a)-\widetilde v(a)\|_{H^1}\notag\\
 &\quad+2|I|^\theta P(B,K)\|f-g\|_{Z(I)}.
 \label{det:comparisonbound}
\end{align}
\end{lemma}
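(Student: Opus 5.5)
We subtract the two Duhamel formulas \eqref{det:equation} and estimate the difference in $X^1(I)$ using the difference bounds of Lemma~\ref{det:differencebounds}. Put $h=v-\widetilde v$. Writing $\mathcal N(v+f)=\mathcal N(v)+E(v,f)$, and similarly for $\widetilde v,g$, we obtain
\[
 h(t)=S(t-a)h(a)-i\mathcal I_a\bigl(\mathcal N(v)-\mathcal N(\widetilde v)\bigr)(t)
       -i\mathcal I_a\bigl(E(v,f)-E(\widetilde v,g)\bigr)(t).
\]
By Lemma~\ref{det:measurable}, each Duhamel term is understood as the $X^1(I)$ representative, so this identity holds in $X^1(I)$ and not merely in $H^{-1}$. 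This is the one point requiring care, and it is already settled by that lemma.

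Next we take $X^1(I)$ norms. The free term is bounded by $\|h(a)\|_{H^1}$ by \eqref{det:linear}. The pure difference is bounded by \eqref{det:puredifference}, whose hypotheses are exactly the assumed bounds $B$ and $\eta$; this gives $C_{\mathrm{nl}}B^3\eta\|h\|_{X^1(I)}$. The mixed difference is bounded by \eqref{det:mixeddifference} with the same $B$ and $K$; this gives $|I|^\theta P(B,K)\bigl(\|h\|_{X^1(I)}+\|f-g\|_{Z(I)}\bigr)$. Together,
\[
 \|h\|_{X^1(I)}\le\|h(a)\|_{H^1}
 +\bigl(C_{\mathrm{nl}}B^3\eta+|I|^\theta P(B,K)\bigr)\|h\|_{X^1(I)}
 +|I|^\theta P(B,K)\|f-g\|_{Z(I)}.
\]

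Under \eqref{det:comparisonconditions} the coefficient of $\|h\|_{X^1(I)}$ on the right is at most $1/2$. Since $\|h\|_{X^1(I)}\le B<\infty$, this term can be absorbed into the left-hand side. Multiplying the result by $2$ gives \eqref{det:comparisonbound}.

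No genuine obstacle remains once Lemmas~\ref{det:differencebounds} and \ref{det:measurable} are available. The only points to check are that the hypotheses of both parts of Lemma~\ref{det:differencebounds} are met with the stated constants, and that the absorbed quantity is finite, which holds because both solutions lie in $X^1(I)$.
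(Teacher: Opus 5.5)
Your proposal is correct and follows the paper's proof essentially line by line. You subtract the Duhamel formulas and bound the free term by \eqref{det:linear}. You bound the pure and mixed differences by \eqref{det:puredifference} and \eqref{det:mixeddifference}, then absorb the $\|v-\widetilde v\|_{X^1(I)}$ term, whose coefficient \eqref{det:comparisonconditions} makes at most $1/2$, and multiply by $2$.
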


\begin{proof}
Write $d=v-\widetilde v$. Subtracting the two integral equations
and using $\mathcal N(v+f)=\mathcal N(v)+E(v,f)$ gives
\begin{align*}
 d(t)={}&S(t-a)d(a)
 -i\int_a^tS(t-s)\big[\mathcal N(v)-\mathcal N(\widetilde v)\big](s)\,ds\\
 &-i\int_a^tS(t-s)\big[E(v,f)-E(\widetilde v,g)\big](s)\,ds.
\end{align*}
The linear estimate and Lemma~\ref{det:differencebounds} yield
\begin{align*}
 \|d\|_{X^1(I)}
 \leq{}&\|d(a)\|_{H^1}
 +\big[C_{\mathrm{nl}}B^3\eta+|I|^\theta P(B,K)\big]\|d\|_{X^1(I)}\\
 &+|I|^\theta P(B,K)\|f-g\|_{Z(I)}.
\end{align*}
By \eqref{det:comparisonconditions}, this yields
\[
 \tfrac12\|d\|_{X^1(I)}
 \leq\|d(a)\|_{H^1}
       +|I|^\theta P(B,K)\|f-g\|_{Z(I)},
\]
which is \eqref{det:comparisonbound}. This completes the proof.
\end{proof}

\begin{corollary}
\label{det:uniqueness}
For an interval $J$ and a prescribed perturbation
$f\in Z_{\mathrm{loc}}(J)$, two
solutions of \eqref{det:equation} with the same initial value
and belonging to $X^1_{\mathrm{loc}}(J)$ agree on their common
interval of existence in $J$.
\end{corollary}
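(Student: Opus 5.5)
The plan is a continuity argument on the set where the two solutions agree, with Lemma~\ref{det:comparison} applied on short intervals with matching initial data and the same perturbation. Let $v,\widetilde v\in X^1_{\mathrm{loc}}(J)$ solve \eqref{det:equation} with the same perturbation $f$ and the same value at the left endpoint $a$ of $J$. Set
\[
 t^*=\sup\{t\in J:\ v=\widetilde v\ \text{on}\ [a,t]\}.
\]
We argue by contradiction: suppose $t^*$ is an interior point of the common interval of existence. By continuity in $H^1$ (from $X^1(I)\hookrightarrow C(I;H^1)$), the two solutions agree at $t^*$.

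Next we restart the equation at $t^*$. Since $S(t-a)=S(t-t^*)S(t^*-a)$, the Duhamel integral splits at $t^*$, so both functions solve \eqref{det:equation} on $[t^*,t^*+\delta]$ with initial value $v(t^*)$ and perturbation $f$. Fix a compact interval $I_0=[t^*,t^*+\delta_0]$ contained in $J$ with $\delta_0\le1$, on which $v,\widetilde v\in X^1(I_0)$ and $f\in Z(I_0)$. Choose constants
\[
 B=1+\|v\|_{X^1(I_0)}+\|\widetilde v\|_{X^1(I_0)},
 \qquad
 K=2\|f\|_{Z(I_0)}.
\]
These bounds persist on every subinterval $I=[t^*,t^*+\delta]$, because restriction does not increase the $X^1$ norm, and the $Z$ norm is clearly monotone in the interval. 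Now fix $\eta$ with $C_{\mathrm{nl}}B^3\eta\le\tfrac14$, and then shrink $\delta$ so that both
\[
 \delta^\theta P(B,K)\le\tfrac14
 \qquad\text{and}\qquad
 \|v\|_{Q(I)}+\|\widetilde v\|_{Q(I)}\le\eta .
\]

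The main point is the smallness of the $Q$ norms. This follows from the measure construction in the proof of Lemma~\ref{det:subdivision}: $\|v\|_{Q_p(I)}^p=\mu_p([t^*,t^*+\delta])$ for a finite measure $\mu_p$ absolutely continuous with respect to Lebesgue measure. Hence it tends to $0$ as $\delta\downarrow0$, and the same holds for $\widetilde v$. Note that we do not need \eqref{det:Qembed}, which gives only boundedness, not smallness.

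With these choices, \eqref{det:comparisonconditions} holds on $I$. Lemma~\ref{det:comparison} then gives
\[
 \|v-\widetilde v\|_{X^1(I)}\le 2\|v(t^*)-\widetilde v(t^*)\|_{H^1}+0=0,
\]
since the perturbations coincide and the initial values agree. So $v=\widetilde v$ on $[a,t^*+\delta]$, contradicting the definition of $t^*$. If $t^*$ is the right endpoint of the common interval, agreement there follows by continuity when the endpoint is included. The same argument run backwards in time covers the case where the initial time is not the left endpoint; the problem as posed is forward from $a$, so this is not needed.

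I expect the only delicate step to be justifying that the restarted functions solve \eqref{det:equation} from $t^*$ in the $H^{-1}$ sense. This is handled by Lemma~\ref{det:measurable} together with the group property of $S$.
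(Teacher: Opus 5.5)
Your argument is correct and is essentially the paper's: both apply Lemma~\ref{det:comparison} with $g=f$ on short intervals where the $Q$ norms are small, via the absolutely continuous measures from the proof of Lemma~\ref{det:subdivision}, and where $|I|^\theta P(B,K)\le\tfrac14$. The paper fixes a finite partition of a compact common interval and inducts over the pieces, while you run a supremum-and-contradiction continuation from $t^*$. One small correction: your smallness argument needs $\mu_p(I_0)$ to be finite, and that finiteness comes from \eqref{det:Qembed} (which is why $\delta_0\le1$ matters), so the remark that \eqref{det:Qembed} is not needed should be dropped.
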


\begin{proof}
Fix a compact common interval $I=[a,b]$ and partition it into
finitely many intervals of length at most one. Choose $B\geq1$
so that the sum of the two $X^1$ norms on each such interval
is at most $B$. Set
\begin{gather*}
 \eta=\min\{1,(4C_{\mathrm{nl}}B^3)^{-1}\},\qquad
 K=2\|f\|_{Z(I)},\\
 \ell=\min\{1,[4P(B,K)]^{-1/\theta}\}.
\end{gather*}
Apply Lemma~\ref{det:subdivision} to the two solutions on each
of these intervals with threshold $\eta/2$, and impose the
length bound $\ell$.
For the resulting partition $a=t_0<\cdots<t_J=b$, the
hypotheses of Lemma~\ref{det:comparison} hold on each piece.
Since the forcing difference vanishes, that lemma gives
\[
 \|v-\widetilde v\|_{X^1([t_j,t_{j+1}])}
 \leq2\|v(t_j)-\widetilde v(t_j)\|_{H^1}.
\]
The initial difference is zero. Evaluation at the right
endpoint and induction over $j=0,\ldots,J-1$ prove equality
on all of $I$. This completes the proof.
\end{proof}

\subsection{A uniform restart time on bounded sets}
\label{det:restartsection}

We compare the perturbed solution with the unforced nonlinear
flow. The unit-time bound gives a uniform upper bound for the
number of critical-norm subdivisions, while the subdivision
points may depend on the initial profile.

\begin{proposition}\label{det:restart}
For every $R\geq1$ and $K\geq0$ there are numbers
$\delta=\delta(R,K)\in(0,1]$ and $A=A(R,K)<\infty$ with the
following property. Given $a\in\mathbb R$, $v_a\in H^1$ with
$\|v_a\|_{H^1}\leq R$, and a strongly measurable perturbation
$f$ satisfying
\[
 \|f\|_{L^\infty([a,a+\delta];H^3)}\leq K,
\]
equation \eqref{det:equation} has a solution on
$[a,a+\delta]$, unique in the local $X^1$ class, and
\begin{equation}\label{det:restartbound}
 \|v\|_{X^1([a,a+\delta])}\leq A.
\end{equation}
The same assertion holds on any shorter interval. The constants
are independent of the Fourier profile of $v_a$ and of the
number, times, or sizes of the jumps of $f$, subject to the
displayed bound.
\end{proposition}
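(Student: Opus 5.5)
We compare $v$ with the unforced solution $w$ of Proposition~\ref{det:unforced} that has the same datum $w(a)=v_a$. That proposition gives $\|w\|_{X^1([a,a+1])}\le B_0(R)$, and \eqref{det:Qembed} then gives $\|w\|_{Q([a,a+1])}\le C_QB_0(R)$. We fix the parameters in the following order:
\[
 B=4C_{\rm con}\bigl(2B_0(R)+2\bigr)+1,\qquad
 \eta=\min\{1,(4C_{\rm nl}B^3)^{-1}\},\qquad
 K'=2C_ZK.
\]
The constant $B$ is enlarged below if necessary; it must dominate $\|w\|+\|v\|$ on each piece. Lemma~\ref{det:subdivision}, applied to $w$ with threshold $\eta/4$, produces a partition of $[a,a+1]$ into $J\le J_0(R):=1+\sum_p(8C_QB_0(R)/\eta)^p$ intervals on which $\|w\|_{Q}\le\eta/4$. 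The bound $J_0$ depends only on $R$, even though the partition points depend on $v_a$.

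Next we choose $\delta$ so that $|I|^\theta P(B,K')\le1/4$ for all $|I|\le\delta$ and so that the accumulated forcing errors stay below $1$. On each piece $I_j$ of $[a,a+\delta]$, \eqref{det:H3bound} gives $\|f\|_{Z(I_j)}\le C_Z\delta^{1/82}K$. We run a continuity (bootstrap) argument piece by piece. On $I_j$, local existence for \eqref{det:equation} comes from a contraction in a ball of $X^1$ on a short subinterval, using Lemma~\ref{det:differencebounds}; uniqueness is Corollary~\ref{det:uniqueness}. The solution is continued as long as $\|v\|_{X^1}\le 2B_0(R)+2$ and $\|v\|_Q\le\eta/2$ on the current piece.

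Under these bootstrap conditions, Lemma~\ref{det:comparison} applies with $g=0$ and gives
\[
 \|v-w\|_{X^1(I_j)}\le 2\|v(t_j)-w(t_j)\|_{H^1}+2\delta^\theta P\,C_Z\delta^{1/82}K .
\]
Iterating over at most $J_0$ pieces gives
\[
 \|v-w\|_{X^1}\le 2^{J_0+1}C_Z\delta^{\theta+1/82}KP=:\epsilon(\delta)
\]
on each piece. Choosing $\delta$ so small that $\epsilon(\delta)\le\min\{1,\eta/(4C_Q)\}$, the embedding \eqref{det:Qembed} recovers the strictly better bounds $\|v\|_Q\le\eta/4+\eta/4$ (strict after halving thresholds) and $\|v\|_{X^1}\le B_0+1$. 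Continuity of $t\mapsto\|v\|_{X^1([t_j,t])}$ and $\|v\|_{Q([t_j,t])}$ in the right endpoint closes the bootstrap, and the solution extends across each piece.

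Concatenation by Lemma~\ref{det:concatenation} then gives $A=C_{\rm con}J_0(B_0(R)+1)$. A shorter interval is handled by the same argument, since all the estimates used are monotone in the interval.

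The main obstacle is the continuity step in the bootstrap. We need $X^1$-norm continuity in the endpoint, together with a local existence theorem on a small interval whose length depends on the local size of $v$ and not on the profile. We would obtain local existence from a contraction in $X^1$ on a short interval, with $Q$-smallness of the free part $S(t-t_j)v(t_j)$. That smallness holds on sufficiently short intervals by absolute continuity of the measures $\mu_{r,p}$ in Lemma~\ref{det:subdivision}. The existence time only needs to be positive, since the bootstrap supplies the uniform control. Jumps of $f$ enter only through $\|f\|_{L^\infty H^3}$, which yields the stated independence.
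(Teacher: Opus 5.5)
Your a priori estimates are correct. With your choices of $B$, $\eta$ and $\delta$, you apply Lemma~\ref{det:comparison} with $g=0$ on each piece of the $w$-partition, run the recursion $e_{j+1}\le 2e_j+c$, and concatenate. This gives bounds independent of the profile of $v_a$, and it is the same comparison with the unforced flow $w$ that the paper relies on.

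What is missing is the existence part, which is the step you call the main obstacle. It is asserted rather than proved, and as written it does not close.

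\begin{itemize}
\item The bootstrap needs $t\mapsto\|v\|_{X^1([t_j,t])}$ to be continuous. This is not established anywhere for the restriction norm \eqref{det:Xrestriction}, and it needs its own argument. Right continuity follows by freezing atoms after $t$. Left continuity needs the continuity of $v$, so that jumps of atoms at $t$ cancel in the sum.
\item More seriously, ``continue as long as the bounds hold'' presupposes a continuation criterion. Suppose the supremal time $T^\dagger$ lies inside the piece. Before restarting you must produce $v(T^\dagger)\in H^1$ and show $v\in X^1([t_j,T^\dagger])$. Bounds on $[t_j,t]$ for $t<T^\dagger$ do not give this without further work. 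Restarting from $v(t_n)$ with $t_n\uparrow T^\dagger$, using local times that depend on the profile of $v(t_n)$, need not reach $T^\dagger$.
\item So ``the existence time only needs to be positive'' is enough only after two further facts. First, the trace exists: $S(-t)v(t)$ is Cauchy in $H^1$ by the $N^1([t,t'])$ bounds and $\|v\|_{Q([t,T^\dagger))}\to0$. Second, the free evolution of $v(t)$ is uniformly $Q$-small near $T^\dagger$.
\item You cannot invoke Proposition~\ref{det:continuation} for this, since it is deduced from the statement you are proving.
\item Your first formulation, a local existence time depending only on the size of the datum, is exactly what fails at critical regularity.
\end{itemize}

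The paper's route shows that none of this is needed. Since $\|w\|_{Q(I_j)}\le\eta/8$ on every piece, it runs the fixed-point argument for $d=v-w$ directly on the whole piece $I_j$. The ball is $\{d\in X^1(I_j): d(a_j)=d_j,\ \|d\|_{X^1(I_j)}\le r\}$ with $r\le\eta/[8(1+C_Q)]$.
\begin{itemize}
\item Every candidate automatically satisfies $\|w+d\|_{Q(I_j)}\le\eta/8+C_Qr\le\eta/4$.
\item Lemma~\ref{det:differencebounds} then gives the self-map and contraction estimates \eqref{det:ballmap}--\eqref{det:ballcontraction}.
\item The recursion $e_{j+1}\le2e_j+2h_*$, together with $2^{J_*+4}h_*\le r$, keeps $e_j+h_*\le r/2$ for every $j<J_*$.
\end{itemize}
Existence on the full piece and the bound come out together. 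There is no local existence with a profile-dependent time, no endpoint continuity of the atomic norm, and no trace argument. If you replace your bootstrap by this contraction, your parameter choices adapt directly and the proof is complete.
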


\begin{proof}
Let $w$ be the unforced solution with $w(a)=v_a$. Proposition
\ref{det:unforced} gives a number
$B=1+B_0(R)$ such that
\[
 \|w\|_{X^1([a,a+1])}\leq B.
\]
Choose $0<\eta\leq1$ and $r>0$ so that
\begin{equation}\label{det:restartsmall}
 C_{\mathrm{nl}}(2B+2)^3\eta\leq\tfrac18,
 \qquad 0<r\leq\min\{1,\eta/[8(1+C_Q)]\}.
\end{equation}
Here $C_Q$ is the embedding constant in
\eqref{det:Qembed}. Partition the entire unit interval so
that $\|w\|_{Q(I_j)}\leq\eta/8$ on every piece. By
Lemma~\ref{det:subdivision}, the number of pieces is bounded by
the integer
\begin{equation}\label{det:Jstar}
 J_*=
 \left\lceil 2+\sum_{p\in\{41/10,100\}}
           (16C_QB/\eta)^p\right\rceil.
\end{equation}
For every $0<\delta\leq1$, intersecting this partition with
$[a,a+\delta]$ leaves at most $J_*$ nonempty pieces.

By Lemma~\ref{det:H3toZ}, every such piece, of length at most
one, satisfies $\|f\|_{Z(I_j)}\leq C_ZK$. Choose a constant
$P_*\geq1$, depending only on $B$ and $K$, that dominates the
coarse mixed bound and the mixed difference bound whenever
the solution norms have sum at most $2B+2$ and the forcing
norms have sum at most $2C_ZK$. For example, a sufficiently large
fixed multiple of $(1+2B+2+2C_ZK)^5$ has this property. Choose
$\delta\in(0,1]$ so small that
\begin{equation}\label{det:deltachoice}
 \delta^\theta P_*\leq\tfrac18,
 \qquad 2^{J_*+4}\delta^\theta P_*\leq r.
\end{equation}
Set $h_* =\delta^\theta P_*$.

Write the restricted partition as
$a=a_0<a_1<\cdots<a_J=a+\delta$, where $J\leq J_*$.
We construct $d=v-w$ successively on
$I_j=[a_j,a_{j+1}]$, $j=0,\ldots,J-1$. Suppose its initial value $d_j$ is
known, and set $e_j=\|d_j\|_{H^1}$. Consider the closed
affine ball
\[
 \mathcal B_j=\{d\in X^1(I_j):d(a_j)=d_j,
                              \ \|d\|_{X^1(I_j)}\leq r\}.
\]
When $e_j\leq r$, the free evolution of $d_j$ belongs to this
ball, so it is nonempty. For $d\in\mathcal B_j$, define
\begin{align*}
 (\Phi_jd)(t)={}&S(t-a_j)d_j
 -i\int_{a_j}^t S(t-s)
       \big[\mathcal N(w+d)-\mathcal N(w)\big](s)\,ds\\
 &-i\int_{a_j}^t S(t-s)E(w+d,f)(s)\,ds.
\end{align*}
These Duhamel terms belong to $X^1(I_j)$, including for the
present measurable $f$, by Lemma~\ref{det:measurable}.
Every candidate satisfies
\[
 \|w+d\|_{X^1(I_j)}\leq B+1,
 \qquad
 \|w+d\|_{Q(I_j)}\leq\eta/8+C_Qr\leq\eta/4.
\]
For two candidates, or for a candidate and $w$, the sums of
the $X^1$ and $Q$ norms are therefore bounded by $2B+2$ and
$\eta$, respectively. Lemma~\ref{det:differencebounds} and
\eqref{det:restartsmall}--\eqref{det:deltachoice} give
\begin{align}
 \|\Phi_jd\|_{X^1(I_j)}
 &\leq e_j+\tfrac18\|d\|_{X^1(I_j)}+h_* ,
 \label{det:ballmap}\\
 \|\Phi_jd-\Phi_j\widetilde d\|_{X^1(I_j)}
 &\leq\tfrac14\|d-\widetilde d\|_{X^1(I_j)}.
 \label{det:ballcontraction}
\end{align}
The first estimate uses the coarse bound on $E(w+d,f)$;
the second uses its difference bound with the same $f$.
The initial value is preserved because each Duhamel term
vanishes at $a_j$.

If $e_j+h_*\leq r/2$, then
$\|\Phi_jd\|_{X^1(I_j)}\leq r/2+r/8=5r/8$ for
$d\in\mathcal B_j$. Thus \eqref{det:ballcontraction} makes
$\Phi_j$ a strict contraction of $\mathcal B_j$ into itself.
For its fixed point, \eqref{det:ballmap} gives
$\frac78\|d\|_{X^1(I_j)}\leq e_j+h_*$, and hence
\begin{equation}\label{det:errorrecurrence}
 \|d\|_{X^1(I_j)}\leq2e_j+2h_* ,
 \qquad e_{j+1}\leq2e_j+2h_* .
\end{equation}
Since $e_0=0$, induction yields
\[
 e_j\leq2(2^j-1)h_*.
\]
For $0\leq j<J_*$, \eqref{det:deltachoice} gives
\[
 e_j+h_*\leq(2^{j+1}-1)h_*
 \leq2^{J_*}h_*\leq r/16<r/2.
\]
Thus the fixed point can be constructed on every piece.

The solutions agree at common endpoints. The concatenation
estimate, Lemma~\ref{det:concatenation}, gives
\[
 \|v\|_{X^1([a,a+\delta])}
 \leq C_{\mathrm{con}}J_*(B+r)=:A.
\]
To verify the integral equation across a joining time $a_j$,
suppose the equation holds up to $a_j$. For $t\in I_j$,
the group identity $S(t-a_j)S(a_j-s)=S(t-s)$ gives
\begin{align*}
 v(t)
 &=S(t-a_j)\left[S(a_j-a)v_a
       -i\int_a^{a_j}S(a_j-s)\mathcal N(v(s)+f(s))\,ds\right]\\
 &\quad-i\int_{a_j}^{t}S(t-s)\mathcal N(v(s)+f(s))\,ds\\
 &=S(t-a)v_a-i\int_a^tS(t-s)\mathcal N(v(s)+f(s))\,ds.
\end{align*}
Induction proves \eqref{det:equation} on $[a,a+\delta]$.
Corollary~\ref{det:uniqueness} gives uniqueness. On any
shorter prescribed interval, extend $f$ by zero, apply the
construction, and restrict the solution. This completes the proof.
\end{proof}

\subsection{Continuation and stability on bounded sets}
\label{det:globalstabilitysection}

\begin{proposition}
\label{det:continuation}
Let $T>0$, let $f\in L^\infty([0,T];H^3)$ with norm at most
$K$, and let $v$ be a local $X^1$ solution on $[0,T_*)$,
where $0<T_*\leq T$. If
\begin{equation}\label{det:aprioriH1}
 \sup_{0\leq t<T_*}\|v(t)\|_{H^1}\leq R,
 \qquad R\geq1,
\end{equation}
then $v$ has a continuous $H^1$ trace at $T_*$. If $T_*<T$,
it extends to a strictly larger interval. Moreover, every
solution on $[0,T]$ satisfying the same $H^1$ bound obeys
\begin{equation}\label{det:AT}
 \|v\|_{X^1([0,T])}\leq A_T(R,K),
\end{equation}
where $A_T(R,K)<\infty$ depends only on the displayed
parameters.
\end{proposition}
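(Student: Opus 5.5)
The plan is to derive all three assertions from the uniform restart time of Proposition~\ref{det:restart}, with the same constants $\delta=\delta(R,K)$ and $A=A(R,K)$ throughout. The key point is that these constants depend only on the $H^1$ bound $R$ of the restart datum and on the $L^\infty H^3$ bound $K$ of the forcing. Because \eqref{det:aprioriH1} holds at every time before $T_*$, one step length $\delta$ works from every restart point.

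\emph{Trace and extension.} Fix $t_0=\max\{0,T_*-\delta/2\}$. Since $\|v(t_0)\|_{H^1}\le R$, Proposition~\ref{det:restart} with $a=t_0$ gives a solution $\widetilde v$ on $[t_0,t_0+\delta]$ with initial value $v(t_0)$, bounded in $X^1$ by $A$. We restrict $f$ to this interval; if $t_0+\delta>T$, we extend $f$ by zero, which keeps the $L^\infty H^3$ bound $K$. Corollary~\ref{det:uniqueness} shows that $\widetilde v=v$ on $[t_0,T_*)$. The Duhamel identity \eqref{det:equation} started at $0$ passes to the restarted equation at $t_0$ by the group identity, exactly as in the joining step of the proof of Proposition~\ref{det:restart}, so this is a comparison of two solutions of the same equation from the same datum.

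Since $t_0+\delta>T_*$ and $\widetilde v\in X^1\hookrightarrow C(\cdot;H^1)$, the value $\widetilde v(T_*)$ is the continuous $H^1$ trace of $v$. If $T_*<T$, concatenating $v|_{[0,t_0]}$ with $\widetilde v|_{[t_0,\min(t_0+\delta,T)]}$ gives a strictly longer solution. It lies in $X^1_{\mathrm{loc}}$ by Lemma~\ref{det:concatenation}, and it satisfies the integral equation by the same group-identity computation.

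\emph{Global bound \eqref{det:AT}.} Let $v$ solve on $[0,T]$ with $\sup_{t}\|v(t)\|_{H^1}\le R$. Partition $[0,T]$ into $n=\lceil T/\delta\rceil$ intervals $[t_{j-1},t_j]$ of length at most $\delta$. On each of them, Proposition~\ref{det:restart}, in its shorter-interval form, produces a solution from $v(t_{j-1})$ with $X^1$ norm at most $A$. By Corollary~\ref{det:uniqueness} it equals $v$ there. Lemma~\ref{det:concatenation} then gives
\[
\|v\|_{X^1([0,T])}\le C_{\mathrm{con}}\lceil T/\delta(R,K)\rceil A(R,K)=:A_T(R,K).
\]
This constant depends only on $T$, $R$ and $K$, as required.

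\emph{Main obstacle.} Most of the work is bookkeeping rather than new estimates. The delicate step is justifying that the uniqueness corollary applies. Both objects have to be solutions of the same integral equation from the same initial time, and both have to lie in $X^1_{\mathrm{loc}}$ of their common interval. For $v$ this requires that its restriction to $[t_0,T_*)$ be in $X^1_{\mathrm{loc}}([t_0,T_*))$, which holds by the definition of local solutions. It also requires that the Duhamel form be re-based at $t_0$, which is where the group identity is used. In addition, the forcing must be measurable and lie in $Z_{\mathrm{loc}}$ on each piece; this follows from Lemma~\ref{det:H3toZ}.
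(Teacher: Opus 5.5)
Your proposal is correct and follows the paper's proof essentially step for step. You restart from a point within $\delta/2$ of $T_*$ using Proposition~\ref{det:restart}, identify the restarted solution with $v$ by uniqueness to obtain the trace and the extension, and then partition $[0,T]$ into pieces of length at most $\delta$ and concatenate. The only differences are cosmetic: you use $\lceil T/\delta\rceil$ pieces instead of $1+\lceil T/\delta\rceil$, and you spell out the re-basing of the Duhamel identity and the $Z_{\mathrm{loc}}$ membership of $f$, which the paper leaves implicit.
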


\begin{proof}
Use $\delta(R,K)$ from Proposition~\ref{det:restart} and choose
$a<T_*$ with $T_*-a<\delta/2$. Restart from the already
defined value $v(a)$. The restarted solution exists up to
$\min\{a+\delta,T\}$ and coincides with the old solution on
their overlap by uniqueness. If $\widetilde v$ denotes this
restarted solution, then $\min\{a+\delta,T\}\geq T_*$ and
\[
 \lim_{t\uparrow T_*}\|v(t)-\widetilde v(T_*)\|_{H^1}
 =\lim_{t\uparrow T_*}\|\widetilde v(t)-\widetilde v(T_*)\|_{H^1}=0.
\]
This defines the trace at $T_*$. If $T_*<T$, then both
$a+\delta$ and $T$ are strictly greater than $T_*$, so the
same solution extends beyond $T_*$.

For the whole-interval estimate, subdivide $[0,T]$ into at most
$n_T=1+\lceil T/\delta\rceil$ intervals of length at most
$\delta$. At each left endpoint the initial norm is at most
$R$. Uniqueness identifies the given solution with the
solution of Proposition~\ref{det:restart}, whose norm on that
piece is at most $A(R,K)$. Concatenation gives
\[
 A_T(R,K)=\max\{1,C_{\mathrm{con}}n_TA(R,K)\}
\]
as one possible choice. This completes the proof.
\end{proof}

\begin{theorem}\label{det:stability}
Fix $T>0$, $R\geq1$, and $K\geq0$. Let $v,\widetilde v$ be
local $X^1$ solutions on $[0,T]$ with perturbations $f,g$,
respectively. Assume
\begin{gather*}
 \sup_{0\leq t\leq T}\|v(t)\|_{H^1}\leq R,
 \qquad
 \sup_{0\leq t\leq T}\|\widetilde v(t)\|_{H^1}\leq R,\\
 \|f\|_{L^\infty([0,T];H^3)}\leq K,
 \qquad
 \|g\|_{L^\infty([0,T];H^3)}\leq K.
\end{gather*}
There is a finite constant $C_T(R,K)$ such that
\begin{align}
 \|v-\widetilde v\|_{X^1([0,T])}
 &\leq C_T(R,K)
 \big(\|v(0)-\widetilde v(0)\|_{H^1}
                         +\|f-g\|_{Z([0,T])}\big),
 \label{det:stabilityZ}\\
 \|v-\widetilde v\|_{X^1([0,T])}
 &\leq C_T(R,K)
 \big(\|v(0)-\widetilde v(0)\|_{H^1}
                  +\|f-g\|_{L^\infty([0,T];H^3)}\big).
 \label{det:stabilityH3}
\end{align}
The constant is uniform over the indicated bounded sets.
\end{theorem}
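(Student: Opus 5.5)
The plan is to subdivide $[0,T]$ into a bounded number of short intervals on which Lemma~\ref{det:comparison} applies, chain the resulting local bounds by evaluating at endpoints, and then concatenate with Lemma~\ref{det:concatenation}.

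\textbf{Global a priori bounds.} Both $v$ and $\widetilde v$ are solutions on $[0,T]$ with $H^1$ bound $R$ and forcing bounded by $K$ in $L^\infty H^3$. Proposition~\ref{det:continuation} therefore gives $\|v\|_{X^1([0,T])},\|\widetilde v\|_{X^1([0,T])}\le A_T(R,K)$. Restriction does not increase the norm, so the same bound holds on every subinterval. Set $B=1+2A_T(R,K)$ and $K'=2C_ZK$. By Lemma~\ref{det:H3toZ}, every subinterval $I$ of length at most one satisfies $\|f\|_{Z(I)}+\|g\|_{Z(I)}\le K'$. Then choose
\[
 \eta=\min\{1,(4C_{\mathrm{nl}}B^3)^{-1}\},\qquad
 \ell=\min\{1,[4P(B,K')]^{-1/\theta}\},
\]
so that both conditions \eqref{det:comparisonconditions} hold on any interval of length at most $\ell$ where the $Q$-norms sum to at most $\eta$.

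\textbf{Partition with a uniformly bounded count.} First cut $[0,T]$ into $\lceil T\rceil$ unit pieces. On each piece, apply Lemma~\ref{det:subdivision} simultaneously to $v$ and $\widetilde v$ with threshold $\eta/2$, and impose the length bound $\ell$. By \eqref{det:Qembed}, $\|v\|_{Q_p}\le C_QA_T(R,K)$ on each unit piece, so \eqref{det:partitioncount} bounds the number of subintervals. Summing over pieces, the total count $J$ satisfies $J\le J_T(R,K)$, a bound depending only on $T,R,K$. This uniformity is the main point: the partition points themselves depend on the solutions, but their number does not. It is also the only delicate step, and it is settled precisely by the whole-interval bound $A_T$ from Proposition~\ref{det:continuation}.

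\textbf{Iteration and concatenation.} On each piece $I_j=[t_j,t_{j+1}]$, Lemma~\ref{det:comparison} gives
\[
 \|v-\widetilde v\|_{X^1(I_j)}\le 2e_j+\tfrac12\|f-g\|_{Z(I_j)},\qquad e_j=\|v(t_j)-\widetilde v(t_j)\|_{H^1}.
\]
Evaluating at the right endpoint via \eqref{det:linear} yields the recursion $e_{j+1}\le 2e_j+\frac12\|f-g\|_{Z([0,T])}$. Hence $e_j\le 2^{j}\bigl(e_0+\|f-g\|_{Z([0,T])}\bigr)$, using that restriction decreases the $Z$ norm, since it is built from $L^{82}$ norms. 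Concatenation then gives
\[
 \|v-\widetilde v\|_{X^1([0,T])}\le C_{\mathrm{con}}J\,2^{J+1}\bigl(e_0+\|f-g\|_{Z([0,T])}\bigr),
\]
which is \eqref{det:stabilityZ} with $C_T=C_{\mathrm{con}}J_T2^{J_T+1}$.

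\textbf{The $H^3$ version.} Estimate \eqref{det:stabilityH3} follows from \eqref{det:stabilityZ} together with $\|f-g\|_{Z([0,T])}\lesssim T^{1/82}\|f-g\|_{L^\infty H^3}$, the difference form of \eqref{det:H3bound}. The resulting factor is absorbed into $C_T$.
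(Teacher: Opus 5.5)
Your proposal is correct and follows the paper's proof essentially step for step. You use the whole-interval bound $A_T(R,K)$ from Proposition~\ref{det:continuation} to fix $B$, $\eta$ and $\ell$, take a simultaneous critical-norm subdivision with a length cap whose cardinality depends only on $T,R,K$, iterate Lemma~\ref{det:comparison} through the endpoint recursion, and then conclude with Lemma~\ref{det:concatenation} and \eqref{det:H3bound}; the only cosmetic differences are that you apply Lemma~\ref{det:subdivision} on each unit piece instead of rerunning the measure construction on all of $[0,T]$, and you use $2|I|^\theta P(B,K')\le\frac12$ directly in place of the paper's constant $C_*=2P(B,K_1)$.
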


\begin{proof}
Proposition~\ref{det:continuation} bounds each whole-interval
$X^1$ norm by $A_T=A_T(R,K)$. Set $B=2A_T+1$, and choose
$0<\eta\leq1$ so that $C_{\mathrm{nl}}B^3\eta\leq1/4$. On every interval
of length at most one, Lemma~\ref{det:H3toZ} bounds the sum of
the two forcing norms by $K_1=2C_ZK$. Choose $0<\ell\leq1$
so that
\[
 \ell^\theta P(B,K_1)\leq\tfrac14.
\]

We next bound the number of intervals needed to apply the
short-interval comparison simultaneously to both solutions.
Divide $[0,T]$ first into at most $n_1=1+\lceil T\rceil$
unit intervals. For either $z=v$ or $z=\widetilde v$ and
$p\in\{41/10,100\}$, additivity of the measure defining
$Q_p^p$ and the unit-interval embedding give
\[
 \|z\|_{Q_p([0,T])}^p\leq n_1(C_QA_T)^p.
\]
Repeat the simultaneous nonatomic-measure construction in the proof
of Lemma~\ref{det:subdivision} for these four finite measures,
with the requirement $\|z\|_{Q_p(I_j)}\leq\eta/4$ for each one.
Refine also by intervals of length at most $\ell$. The
resulting partition
$0=t_0<t_1<\cdots<t_J=T$ has
\begin{equation}\label{det:JT}
 J\leq J_T:=
 \left\lceil 2+T/\ell
       +2n_1\sum_{p\in\{41/10,100\}}
                   (4C_QA_T/\eta)^p\right\rceil.
\end{equation}
On every piece the sum of the two $Q$ norms is at most
$\eta$; all hypotheses of Lemma~\ref{det:comparison} hold.

Set $e_j=\|v(t_j)-\widetilde v(t_j)\|_{H^1}$ and
$D=\|f-g\|_{Z([0,T])}$. Restriction decreases the $Z$ norm,
so \eqref{det:comparisonbound} gives, with
$C_*=2P(B,K_1)$,
\begin{align*}
 \|v-\widetilde v\|_{X^1([t_j,t_{j+1}])}
 &\leq2e_j+C_*D,\\
 e_{j+1}&\leq2e_j+C_*D.
\end{align*}
Consequently,
\[
 e_j\leq2^j e_0+(2^j-1)C_*D.
\]
Substitution into the local bound gives, for $0\leq j<J$,
\[
 \|v-\widetilde v\|_{X^1([t_j,t_{j+1}])}
 \leq2^{j+1}e_0+(2^{j+1}-1)C_*D
 \leq2^{J_T}(e_0+C_*D).
\]
Lemma~\ref{det:concatenation} therefore yields
\begin{align*}
 \|v-\widetilde v\|_{X^1([0,T])}
 &\leq C_{\mathrm{con}}\sum_{j=0}^{J-1}
       \|v-\widetilde v\|_{X^1([t_j,t_{j+1}])}\\
 &\leq C_{\mathrm{con}}J_T2^{J_T}(e_0+C_*D).
\end{align*}
This is \eqref{det:stabilityZ}, since $J_T$ and $C_*$ depend
only on $T,R,K$. Finally,
\[
 D\leq C_ZT^{1/82}
             \|f-g\|_{L^\infty([0,T];H^3)}
\]
by \eqref{det:H3bound}. Enlarging the constant by
$1+C_ZT^{1/82}$ gives \eqref{det:stabilityH3}.
This completes the proof.
\end{proof}

\begin{corollary}\label{det:totalfield}
Under the assumptions of Theorem~\ref{det:stability}, put
$u=v+f$ and $\widetilde u=\widetilde v+g$. Then
\begin{align*}
 &\|u-\widetilde u\|_{L^\infty([0,T];H^1)}
       +\|v-\widetilde v\|_{X^1([0,T])}\\
 &\qquad\leq C_T(R,K)
 \big(\|v(0)-\widetilde v(0)\|_{H^1}
               +\|f-g\|_{L^\infty([0,T];H^3)}\big).
\end{align*}
For c\`adl\`ag forcing paths, the corresponding estimate with
pointwise suprema holds when the forcing bounds and the
forcing distance are also interpreted as pointwise suprema.
In that case $u$ is c\`adl\`ag in $H^1$ and
\[
 u(t)-u(t-)=f(t)-f(t-),\qquad 0<t\leq T.
\]
\end{corollary}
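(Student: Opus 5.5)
The estimate follows from the triangle inequality $u-\widetilde u=(v-\widetilde v)+(f-g)$ together with Theorem~\ref{det:stability}. For almost every $t\in[0,T]$,
\[
 \|u(t)-\widetilde u(t)\|_{H^1}\leq\|v(t)-\widetilde v(t)\|_{H^1}+\|f(t)-g(t)\|_{H^1}.
\]
The first term is bounded through the embedding \eqref{det:linear}, $\|v-\widetilde v\|_{C([0,T];H^1)}\leq\|v-\widetilde v\|_{X^1([0,T])}$, and then by \eqref{det:stabilityH3}. The second term is at most $\|f-g\|_{L^\infty([0,T];H^3)}$, because $\|\cdot\|_{H^1}\leq\|\cdot\|_{H^3}$ coefficientwise. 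Taking essential suprema and adding the $X^1$ term, which \eqref{det:stabilityH3} controls directly, gives the claim after replacing $C_T(R,K)$ by $2C_T(R,K)+1$. I am reading the corollary's constant as generic, as the paper allows.

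For c\`adl\`ag forcing, the pointwise suprema dominate the essential suprema. The hypotheses of Theorem~\ref{det:stability} therefore hold with the same $K$, and the right-hand side of \eqref{det:stabilityH3} is bounded by the pointwise distance $\sup_t\|f(t)-g(t)\|_{H^3}$. On the left, $v-\widetilde v$ is continuous, so its pointwise supremum equals its $C(I;H^1)$ norm. The pointwise inequality $\|f(t)-g(t)\|_{H^1}\leq\sup_s\|f(s)-g(s)\|_{H^3}$ holds for every $t$, not just almost every $t$. So the same triangle-inequality argument gives the bound with $\sup_{0\leq t\leq T}$ in place of the essential supremum.

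For the path structure, $v\in X^1([0,T])\hookrightarrow C([0,T];H^1)$ is continuous. A c\`adl\`ag path in $H^3$ is c\`adl\`ag in $H^1$ by the continuous embedding $H^3\hookrightarrow H^1$. Hence $u=v+f$ is c\`adl\`ag in $H^1$, and since $v(t-)=v(t)$,
\[
 u(t)-u(t-)=v(t)-v(t-)+f(t)-f(t-)=f(t)-f(t-),\qquad 0<t\leq T.
\]
The one point that needs care is that a c\`adl\`ag $H^3$ path is strongly measurable and belongs to $L^\infty([0,T];H^3)$, so that Theorem~\ref{det:stability} applies. This holds because such a path is a uniform limit of step functions and is bounded on the compact interval. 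The rest is routine.
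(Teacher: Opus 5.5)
Your proof is correct and follows the paper's argument: split $u-\widetilde u=(v-\widetilde v)+(f-g)$, use the embedding $X^1([0,T])\hookrightarrow C([0,T];H^1)$ and $\|\cdot\|_{H^1}\leq\|\cdot\|_{H^3}$, apply \eqref{det:stabilityH3}, read the same inequality at every $t$ for c\`adl\`ag forcing, and get the jump identity from the continuity of $v$. Your extra remark that c\`adl\`ag $H^3$ paths are strongly measurable and bounded, so that Theorem~\ref{det:stability} applies, is a detail the paper leaves implicit.
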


\begin{proof}
The identity $u-\widetilde u=(v-\widetilde v)+(f-g)$ gives
\begin{align*}
 \|u-\widetilde u\|_{L^\infty([0,T];H^1)}
 &\leq\|v-\widetilde v\|_{C([0,T];H^1)}
          +\|f-g\|_{L^\infty([0,T];H^1)}\\
 &\leq\|v-\widetilde v\|_{X^1([0,T])}
          +\|f-g\|_{L^\infty([0,T];H^3)}.
\end{align*}
Adding $\|v-\widetilde v\|_{X^1([0,T])}$ and applying
\eqref{det:stabilityH3} proves the asserted estimate after
enlarging $C_T(R,K)$. If $f$ and $g$ are c\`adl\`ag, the
same pointwise inequality applies at every $t\in[0,T]$,
including $T$, and gives the pointwise-supremum estimate.
The continuity of $v$ and existence of the left limits of $f$
show that $u$ is c\`adl\`ag and that
\[
 u(t)-u(t-)=v(t)-v(t-)+f(t)-f(t-)=f(t)-f(t-).
\]
This completes the proof.
\end{proof}

For $h\in H^3$ and $s,\widetilde s\in[0,T]$,
\[
 \|h\mathbf1_{[s,T]}-h\mathbf1_{[\widetilde s,T]}
            \|_{L^{82}([0,T];H^3)}
 =|s-\widetilde s|^{1/82}\|h\|_{H^3}.
\]
Consequently, for a fixed finite number of jumps with fixed
$H^3$ amplitudes, convergence of the jump times gives
convergence of the forcing paths in $L^{82}([0,T];H^3)$.
Lemma~\ref{det:H3toZ} and \eqref{det:stabilityZ} then give
convergence of the corresponding continuous remainders in
$X^1([0,T])$ under the bounds of Theorem~\ref{det:stability}.

\section{Stochastic estimates and construction on the prescribed basis}
\label{sec:probability}

We work throughout on the stochastic basis and with the Poisson random measure
specified in Theorem~\ref{thm:main}. All approximations use this same random
measure. For \(0<\varepsilon<1\), write
\[
 Z_\varepsilon=\{z\in Z_0:|z|>\varepsilon\},\qquad
 \widetilde N(\mathrm dt,\mathrm dz)
   =N(\mathrm dt,\mathrm dz)-\mathrm dt\,\nu(\mathrm dz).
\]
For \(j\in\{0,1,3\}\), we use the notation
\[
 q_j=\int_{Z_0}\|Gz\|_{H^j}^2\nu(\mathrm dz),\qquad
 q_3(r)=\int_{\{z\in Z_0:|z|\le r\}}\|Gz\|_{H^3}^2\nu(\mathrm dz),
 \qquad 0<r\leq1.
\]
The boundedness of \(G:\mathbb R^m\to H^3\) and the second-moment assumption
imply \(q_j<\infty\) and \(q_3(r)\to0\) as \(r\downarrow0\).
Constants below may depend on the fixed torus and on the indicated norms
of \(G\) and are uniform in the small-jump cutoff.

\subsection{The Poisson convolution and its spatial forcing norm}

Define the truncated stochastic convolution by
\begin{equation}\label{prob:truncatedconvolution}
 \Psi^\varepsilon(t)=
 \int_0^t\int_{Z_\varepsilon}S(t-s)Gz\,
       \widetilde N(\mathrm ds,\mathrm dz).
\end{equation}
The corresponding integral over \(Z_0\) will be denoted by \(\Psi\).
It is constructed by Hilbert-space stochastic integration.

\begin{lemma}
\label{prob:convolution}
The process \(\Psi\) has an adapted \(H^3\)-valued c\`adl\`ag version.
For every \(T<\infty\) and \(j\in\{0,1,3\}\),
\begin{equation}\label{prob:convolutionmax}
 \mathbb E\sup_{0\leq t\leq T}\|\Psi(t)\|_{H^j}^2
 \le 4Tq_j.
\end{equation}
The same bound holds with \(\Psi^\varepsilon\) in place of \(\Psi\).
Moreover,
\begin{equation}\label{prob:convolutiontail}
 \mathbb E\sup_{0\leq t\leq T}
   \|\Psi(t)-\Psi^\varepsilon(t)\|_{H^3}^2
 \le4Tq_3(\varepsilon),
\end{equation}
and, for \(0<\varepsilon,\eta<1\),
\begin{equation}\label{prob:convolutionannulus}
 \mathbb E\sup_{0\leq t\leq T}
   \|\Psi^\varepsilon(t)-\Psi^\eta(t)\|_{H^3}^2
 \le4Tq_3(\max\{\varepsilon,\eta\}).
\end{equation}
\end{lemma}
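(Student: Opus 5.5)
The plan is to pass to the interaction representation. Since $S(t)$ is unitary on every $H^s$ and commutes with the Fourier multipliers defining the norms, we have $\|\Psi(t)\|_{H^j}=\|\mathcal M(t)\|_{H^j}$, where
\[
 \mathcal M(t)=S(-t)\Psi(t)=\int_0^t\int_{Z_0}S(-s)Gz\,\widetilde N(\mathrm ds,\mathrm dz).
\]
The integrand $(s,z)\mapsto S(-s)Gz$ is deterministic, hence predictable, with values in $H^3$. It satisfies
\[
 \int_0^T\int_{Z_0}\|S(-s)Gz\|_{H^3}^2\,\nu(\mathrm dz)\,\mathrm ds=Tq_3<\infty.
\]
Standard Hilbert-space Poisson integration (as in \cite{PeszatZabczyk2007}) therefore makes $\mathcal M$ a square-integrable $H^3$-valued martingale. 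We construct it as the $L^2(\Omega;H^3)$ limit of the finite-activity integrals over $Z_\varepsilon$. Each of these is a compound Poisson sum minus a continuous compensator (note $\nu(Z_\varepsilon)\le\varepsilon^{-2}m_2<\infty$), so it is adapted and c\`adl\`ag.

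Next, Doob's maximal inequality for the real submartingale $\|\mathcal M(t)\|_{H^j}$ combines with the isometry. Unitarity gives $\|S(-s)Gz\|_{H^j}=\|Gz\|_{H^j}$, and so
\[
 \mathbb E\sup_{t\le T}\|\mathcal M(t)\|_{H^j}^2\le4\,\mathbb E\|\mathcal M(T)\|_{H^j}^2=4Tq_j.
\]
The same argument applies verbatim to $\mathcal M-\mathcal M^\varepsilon$, whose integrand is supported on $\{|z|\le\varepsilon\}$; this gives \eqref{prob:convolutiontail}. For $\mathcal M^\varepsilon-\mathcal M^\eta$, the integrand is supported in the annulus between $\min$ and $\max$ of $\varepsilon,\eta$, so the variance is at most $Tq_3(\max\{\varepsilon,\eta\})$, which gives \eqref{prob:convolutionannulus}. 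Applying the tail estimate along $\varepsilon_n=2^{-n}$ and Borel--Cantelli (using $\sum q_3(2^{-n})$ along a subsequence with $q_3(\varepsilon_{n_k})\le 4^{-k}$) shows that $\mathcal M^{\varepsilon_{n_k}}\to\mathcal M$ uniformly on $[0,T]$ in $H^3$ almost surely. A uniform limit of c\`adl\`ag adapted processes is c\`adl\`ag and adapted, where the usual conditions handle the null set. This gives the version of $\mathcal M$ on each $[0,T]$, and by consistency on $[0,\infty)$.

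Finally, the version of $\Psi$ is $\Psi(t)=S(t)\mathcal M(t)$. The map $t\mapsto S(t)$ is strongly continuous and isometric on $H^3$. Hence if $\mathcal M$ is right-continuous with left limits at $t$, then
\[
 \|S(t')\mathcal M(t')-S(t)\mathcal M(t)\|_{H^3}\le\|\mathcal M(t')-\mathcal M(t)\|_{H^3}+\|(S(t')-S(t))\mathcal M(t)\|_{H^3}\to0
\]
as $t'\downarrow t$, and similarly for left limits, which are $S(t)\mathcal M(t-)$. Thus $\Psi$ is c\`adl\`ag. The maximal bounds transfer to $\Psi$ because sup norms are unchanged under $S(t)$.

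The main care point is that Doob cannot be applied directly to $\Psi$, which is not a martingale. This is why the whole argument goes through $\mathcal M$, and the only delicate step is the c\`adl\`ag modification, which is handled by the Borel--Cantelli uniform convergence argument.
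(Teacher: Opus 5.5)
Your proposal is correct and follows essentially the same route as the paper: the interaction representation, the Poisson isometry combined with the Hilbert-space Doob inequality for the maximal bounds, a c\`adl\`ag version obtained as an almost-sure uniform limit of the finite-activity truncations along a subsequence, and the transfer to $\Psi$ through the strongly continuous unitary group. The one difference is the order of steps. The paper extracts the subsequence from the Cauchy (annulus) bound among the truncations, which are already c\`adl\`ag. This avoids taking a supremum of $\mathcal M-\mathcal M^\varepsilon$ before a version of $\mathcal M$ exists. Your tail-based argument also works if you first take that supremum over a countable dense set of times containing $T$.
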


\begin{proof}
In the interaction representation, set
\[
 Y(t)=\int_0^t\int_{Z_0}S(-s)Gz\,\widetilde N(\mathrm ds,\mathrm dz).
\]
Its truncation is
\[
 Y^\varepsilon(t)=\int_0^t\int_{Z_\varepsilon}
                S(-s)Gz\,\widetilde N(\mathrm ds,\mathrm dz),
 \qquad \Psi^\varepsilon(t)=S(t)Y^\varepsilon(t).
\]
Since \(S(-s)\) is unitary on \(H^3\), its deterministic integrand satisfies
\[
 \int_0^T\int_{Z_0}\|S(-s)Gz\|_{H^3}^2
       \nu(\mathrm dz)\,\mathrm ds=Tq_3<\infty.
\]
The Hilbert-space compensated Poisson integral is therefore well defined
as a square-integrable martingale. We use the real Hilbert structure on
the complex Sobolev space when applying stochastic integration.
The Poisson isometry and the Hilbert-space Doob inequality give
\[
 \mathbb E\sup_{0\leq t\leq T}\|Y(t)\|_{H^j}^2
 \le4\mathbb E\|Y(T)\|_{H^j}^2
 =4\int_0^T\int_{Z_0}
       \|S(-s)Gz\|_{H^j}^2\nu(\mathrm dz)\,\mathrm ds
 =4Tq_j.
\]
The stochastic integral is the completion of predictable simple
integrands under the Poisson isometry; see
\cite{Applebaum2009,PeszatZabczyk2007}.

The identity \(\Psi(t)=S(t)Y(t)\) now defines the convolution.
The equality of the Sobolev norms of \(\Psi(t)\) and \(Y(t)\) transfers
the maximal bound to \(\Psi\). A c\`adl\`ag version is obtained from
the truncations as follows.
The truncated martingales \(Y^\varepsilon\) are c\`adl\`ag, and Doob's
inequality makes them Cauchy in the mean square of the uniform \(H^3\)
distance on every bounded interval. A subsequence converges uniformly
almost surely on all compact time intervals. Its limit is c\`adl\`ag.
Multiplication by the strongly continuous group \(S(t)\) preserves this
path property. For instance, right continuity at \(t\) follows from
\[
 \|S(t+h)Y(t+h)-S(t)Y(t)\|_{H^3}
 \le\|Y(t+h)-Y(t)\|_{H^3}
   +\|(S(h)-\mathrm{Id})Y(t)\|_{H^3}.
\]
For \(t>0\), the left limit is \(S(t)Y(t-)\), since
\begin{align*}
 &\|S(t-h)Y(t-h)-S(t)Y(t-)\|_{H^3}\\
 &\quad\leq\|Y(t-h)-Y(t-)\|_{H^3}
     +\|(S(-h)-\mathrm{Id})Y(t-)\|_{H^3}
 \longrightarrow0
\end{align*}
as \(h\downarrow0\).

The tail difference has interaction representation
\[
 S(-t)(\Psi(t)-\Psi^\varepsilon(t))
 =\int_0^t\int_{\{|z|\leq\varepsilon\}}
          S(-s)Gz\,\widetilde N(\mathrm ds,\mathrm dz).
\]
Doob's inequality and the isometry therefore give
\[
 \mathbb E\sup_{0\leq t\leq T}
    \|\Psi(t)-\Psi^\varepsilon(t)\|_{H^3}^2
 \leq4\int_0^T\int_{\{|z|\leq\varepsilon\}}
                 \|Gz\|_{H^3}^2\nu(\mathrm dz)\,\mathrm ds
 =4Tq_3(\varepsilon).
\]
For two cutoffs, the same integral is restricted to
\(\{\min(\varepsilon,\eta)<|z|\leq\max(\varepsilon,\eta)\}\),
up to its sign. Its quadratic mean is bounded by
\(Tq_3(\max\{\varepsilon,\eta\})\), which proves
\eqref{prob:convolutionannulus}.
This completes the proof.
\end{proof}

The next estimate records the spatial regularity used in the
deterministic forcing argument. Recall that
\[
 \|f\|_{Z(I)}
 =\left(\sum_{N\ge1}N^2
   \|P_Nf\|_{L^{82}(I\times\mathbb T^3)}^2\right)^{1/2},
\]
where the sum is over dyadic \(N\). This definition is used on arbitrary
bounded intervals.

\begin{lemma}\label{prob:Zembed}
For any bounded interval \(I\) and strongly measurable
\(f\in L^{82}(I;H^3)\),
\begin{equation}\label{prob:Zembedding}
 \|f\|_{Z(I)}\lesssim\|f\|_{L^{82}(I;H^3)}.
\end{equation}
Consequently,
\begin{equation}\label{prob:Zsup}
 \|f\|_{Z(I)}
 \lesssim |I|^{1/82}\sup_{t\in I}\|f(t)\|_{H^3}
\end{equation}
whenever the right-hand side is finite. The same estimates hold for the
difference of two such functions.
\end{lemma}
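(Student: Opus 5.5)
This lemma restates Lemma~\ref{det:H3toZ}, so the plan is to repeat that Bernstein argument, with a little attention to the supremum form \eqref{prob:Zsup}. For almost every $t\in I$ and every dyadic $N$, Bernstein's inequality on $\mathbb T^3$ with the normalized measure gives
\[
 N\|P_Nf(t)\|_{L^{82}_x}\lesssim N^{1+3(1/2-1/82)}\|P_Nf(t)\|_{L^2_x}
 \lesssim N^{-1/2-3/82}\|f(t)\|_{H^3}.
\]
The second step uses $\|P_Nf(t)\|_{L^2}\lesssim N^{-3}\|f(t)\|_{H^3}$, which holds because of the frequency localization of $P_N$. For $N=1$ the same bound holds trivially.

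Next, raise this to the power $82$ and integrate over $I$. This gives
\[
 N\|P_Nf\|_{L^{82}(I\times\mathbb T^3)}\lesssim N^{-1/2-3/82}\|f\|_{L^{82}(I;H^3)}.
\]
Squaring and summing over dyadic $N$ produces the convergent series $\sum_N N^{-1-6/82}$, which proves \eqref{prob:Zembedding}. Measurability of $t\mapsto\|P_Nf(t)\|_{L^{82}}$ follows from strong measurability of $f$, because $P_N$ is bounded from $H^3$ to $L^{82}$.

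For \eqref{prob:Zsup}, apply H\"older's inequality in time:
\[
 \|f\|_{L^{82}(I;H^3)}\le|I|^{1/82}\operatorname*{ess\,sup}_{t\in I}\|f(t)\|_{H^3}\le|I|^{1/82}\sup_{t\in I}\|f(t)\|_{H^3}.
\]
The pointwise supremum is the natural quantity for the c\`adl\`ag paths $\Psi$ to which this will be applied.

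The statement for differences follows because both estimates are linear in $f$: apply them to $f-g$, which is again strongly measurable and lies in $L^{82}(I;H^3)$. There is no real obstacle here. The only point to check is that the Bernstein exponent $-1/2-3/82$ is negative, and since $82>2$ the gain of three derivatives more than covers the $3/2$ loss.
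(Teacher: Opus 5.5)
Your proposal is correct and follows the paper's proof essentially verbatim. Both use Bernstein's inequality with the frequency localization of $P_N$ to obtain the decay $N^{-1/2-3/82}$, sum the resulting convergent dyadic series, apply H\"older's inequality in time for the supremum form, and use linearity to handle differences.
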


\begin{proof}
Bernstein's inequality and the frequency localization of \(P_N\)
give
\[
 N\|P_Nf(t)\|_{L^{82}}
 \lesssim N^{1+3(1/2-1/82)}\|P_Nf(t)\|_{L^2}
 \lesssim N^{-1/2-3/82}\|f(t)\|_{H^3}.
\]
Taking the \(L^{82}(I)\) norm, squaring, and summing over dyadic
\(N\) yields
\[
 \|f\|_{Z(I)}^2
 \lesssim\sum_{N\geq1}N^{-1-6/82}
                 \|f\|_{L^{82}(I;H^3)}^2
 \lesssim\|f\|_{L^{82}(I;H^3)}^2.
\]
Moreover,
\[
 \|f\|_{L^{82}(I;H^3)}
 =\left(\int_I\|f(t)\|_{H^3}^{82}\,\mathrm dt\right)^{1/82}
 \leq |I|^{1/82}\sup_{t\in I}\|f(t)\|_{H^3}.
\]
The estimates apply to \(f-g\) whenever both functions satisfy
the stated assumptions.
This completes the proof.
\end{proof}

It follows that, outside one null set, every bounded time interval
satisfies
\[
 \|\Psi\|_{L^\infty(I;H^3)}+\|\Psi\|_{Z(I)}<\infty.
\]
For each truncation, these bounds follow by applying
\eqref{prob:Zsup} to its c\`adl\`ag \(H^3\) paths.

\subsection{Global solutions with finitely many retained jumps}

\begin{proposition}
\label{prob:cutoff}
For each \(0<\varepsilon<1\), there is a global adapted
\(H^1\)-valued c\`adl\`ag process \(u^\varepsilon\) satisfying
\begin{equation}\label{prob:cutoffmild}
 u^\varepsilon(t)
 =S(t)u_0-i\int_0^tS(t-s)\mathcal N(u^\varepsilon(s))\,\mathrm ds
       +\Psi^\varepsilon(t).
\end{equation}
On every bounded time interval,
\begin{equation}\label{prob:cutoffremainder}
 v^\varepsilon:=u^\varepsilon-\Psi^\varepsilon
 \in X^1([0,T])\subset C([0,T];H^1)
 \quad\hbox{almost surely}.
\end{equation}
At each retained jump with mark \(z\),
\begin{equation}\label{prob:cutoffjump}
 u^\varepsilon(t)=u^\varepsilon(t-)+Gz.
\end{equation}
\end{proposition}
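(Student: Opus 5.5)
The plan is to build \(u^\varepsilon\) pathwise, using the fact that \(\nu(Z_\varepsilon)\le\varepsilon^{-2}m_2<\infty\). Because of this bound, on every bounded interval the restriction of \(N\) to \(Z_\varepsilon\) has almost surely finitely many atoms \((\tau_k,z_k)\), with \(0<\tau_1<\tau_2<\cdots\) and \(\tau_k\to\infty\). Each \(\tau_k\) is a stopping time and each mark \(z_k\) is \(\mathcal F_{\tau_k}\)-measurable. On \(Z_\varepsilon\) the compensated integral splits into a finite sum minus a deterministic drift:
\[
 \Psi^\varepsilon(t)=\sum_{\tau_k\le t}S(t-\tau_k)Gz_k-\int_0^tS(t-s)b_\varepsilon\,ds,
 \qquad b_\varepsilon=\int_{Z_\varepsilon}Gz\,\nu(dz)\in H^3.
\]
The drift vanishes by the symmetry \(\nu(A)=\nu(-A)\), although the argument does not need this. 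Hence \(\Psi^\varepsilon\) is a c\`adl\`ag \(H^3\) path that is continuous between jump times and jumps by \(Gz_k\) at \(\tau_k\).

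I would define \(u^\varepsilon\) by concatenation. On \([0,\tau_1)\) set \(u^\varepsilon=w_0\), the global unforced solution of Proposition~\ref{det:unforced} with data \(u_0\) (with the drift term, if it is kept, the solution is instead the forced solution of Proposition~\ref{det:continuation}). Set \(u^\varepsilon(\tau_1)=w_0(\tau_1)+Gz_1\), and continue with the unforced flow from that datum until \(\tau_2\), and so on. Between jumps the mass and energy are conserved, so on each inter-jump interval the \(H^1\) norm is bounded by a constant depending on the datum at the left endpoint. Since only finitely many jumps occur before \(T\), the path is bounded in \(H^1\) on \([0,T]\) and no blow-up issue arises. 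Each piece lies in \(X^1\) by Proposition~\ref{det:unforced} together with concatenation over unit intervals. The jump rule \eqref{prob:cutoffjump} then holds by construction, and the path is c\`adl\`ag in \(H^1\).

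The main step is to verify \eqref{prob:cutoffmild} and \eqref{prob:cutoffremainder}, that is, that \(v^\varepsilon=u^\varepsilon-\Psi^\varepsilon\) is continuous, lies in \(X^1([0,T])\), and solves the remainder equation \eqref{det:equation} with \(f=\Psi^\varepsilon\) and \(a=0\).
\begin{enumerate}
\item On each interval \([\tau_k,\tau_{k+1})\), write \(u^\varepsilon=w_k\), the unforced solution with datum \(u^\varepsilon(\tau_k)\). By the Duhamel formula for \(w_k\),
\[
 w_k(t)=S(t-\tau_k)u^\varepsilon(\tau_k)-i\int_{\tau_k}^tS(t-s)\mathcal N(u^\varepsilon)\,ds.
\]
\item Induct over \(k\), using the group identity exactly as in the proof of Proposition~\ref{det:restart}, and substitute \(u^\varepsilon(\tau_k)=u^\varepsilon(\tau_k-)+Gz_k\). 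The free evolutions of the jump amplitudes, \(S(t-\tau_k)Gz_k\), accumulate to reproduce \(\Psi^\varepsilon(t)\). This gives \eqref{prob:cutoffmild}, first in \(H^{-1}\).
\item The remainder equals \(v^\varepsilon=S(t)u_0-i\int_0^tS(t-s)\mathcal N(u^\varepsilon)\,ds\), which is continuous across each \(\tau_k\) because the jumps of \(u^\varepsilon\) and \(\Psi^\varepsilon\) cancel.
\item On each inter-jump interval, \(v^\varepsilon=w_k-\Psi^\varepsilon\). Here \(w_k\) lies in \(X^1\), and \(\Psi^\varepsilon\) is a smooth-in-time \(H^3\) path there, so it also lies in \(X^1\). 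Lemma~\ref{det:concatenation} then places \(v^\varepsilon\) in \(X^1([0,T])\).
\end{enumerate}
Alternatively, one can apply Corollary~\ref{det:totalfield} directly: \(v^\varepsilon\) is the solution of \eqref{det:equation} with c\`adl\`ag forcing \(\Psi^\varepsilon\), and it exists on \([0,T]\) because of the a priori \(H^1\) bound above.

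Adaptedness is the remaining point, and I expect it to be the main technical obstacle. The plan is to write \(u^\varepsilon(t)\) on the event \(\{\tau_n\le t<\tau_{n+1}\}\) as a map \(\Phi_n(t;\tau_1,z_1,\dots,\tau_n,z_n)\) built from the deterministic flow and the translations \(w\mapsto w+Gz\). Each \(\Phi_n\) is continuous in its parameters: the flow map is continuous in \(H^1\) and in time by the stability estimates of Theorem~\ref{det:stability}, and the translation is continuous in \(z\). The family \((\tau_k,z_k)_{\tau_k\le t}\) is \(\mathcal F_t\)-measurable, so \(u^\varepsilon(t)\) is \(\mathcal F_t\)-measurable. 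Finally, since \(\tau_k\to\infty\) almost surely, the construction is global.
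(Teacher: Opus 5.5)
Your proposal is correct and follows essentially the same route as the paper. It concatenates the global deterministic flow with the translations $w\mapsto w+Gz$ at the finitely many retained jumps, and recovers \eqref{prob:cutoffmild} by induction with the group identity. It obtains continuity of $v^\varepsilon$ from the cancellation of the jumps of $u^\varepsilon$ and $\Psi^\varepsilon$. It places $v^\varepsilon$ in $X^1$ on each segment and then applies Lemma~\ref{det:concatenation}; on a segment $\Psi^\varepsilon(t)=S(t-\tau_k)\Psi^\varepsilon(\tau_k)$ is a free evolution, so \eqref{det:linear} applies directly. Adaptedness comes from continuity of the flow map and measurability of the jump times and marks.

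One small correction: your aside that the vanishing of $\int_{Z_\varepsilon}Gz\,\nu(dz)$ is not needed does not fit the argument you actually give. There the a priori $H^1$ bound between jumps comes from mass and energy conservation, which fails if a nonzero drift is kept, and the paper uses the symmetry of $\nu$ precisely at this point.
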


\begin{proof}
The second-moment assumption yields
\[
 \nu(Z_\varepsilon)\le\varepsilon^{-2}m_2<\infty.
\]
Thus the retained jump times have no finite accumulation point.
There are almost surely no simultaneous retained jumps, since the time
part of the intensity is Lebesgue measure.
Symmetry of \(\nu\) gives
\begin{equation}\label{prob:zerodrift}
 \int_{Z_\varepsilon}z\,\nu(\mathrm dz)=0,\qquad
 \int_{Z_\varepsilon}Gz\,\nu(\mathrm dz)=0.
\end{equation}
Both integrals exist absolutely because \(Z_\varepsilon\) has finite
measure and bounded marks.

Let \(\mathcal U(t,w)\) denote the global deterministic defocusing
quintic flow at elapsed time \(t\geq0\) with initial value \(w\in H^1\),
provided by \cite{IP2012}. On a fixed interval \([0,T]\), write the
retained times as \(0<t_1<\cdots<t_K\leq T\), with marks
\(z_1,\ldots,z_K\), and put \(t_0=0\), \(t_{K+1}=T\), and \(w_0=u_0\).
Define recursively
\[
 w_j=\mathcal U(t_j-t_{j-1},w_{j-1})+Gz_j,
 \qquad 1\leq j\leq K.
\]
For \(t_j\leq t<t_{j+1}\), set
\(u^\varepsilon(t)=\mathcal U(t-t_j,w_j)\); the last segment
extends from \(t_K\) to \(T\), including \(T\).
When \(K=0\), use \(\mathcal U(t,u_0)\) on the whole interval.
The continuity of the deterministic flow in \(H^1\) gives
\[
 u^\varepsilon(t_j-)=\mathcal U(t_j-t_{j-1},w_{j-1}),
 \qquad u^\varepsilon(t_j)=u^\varepsilon(t_j-)+Gz_j.
\]
The definitions agree as \(T\) increases and give a global
c\`adl\`ag process. The map \((t,w)\mapsto\mathcal U(t,w)\) is
continuous, and each retained time and mark is measurable with
respect to the Poisson random measure up to that time.
The recursion therefore makes \(u^\varepsilon(t)\)
\(\mathcal F_t\)-measurable for every \(t\geq0\).

By \eqref{prob:zerodrift}, the convolution on a bounded interval has the
pathwise representation
\begin{equation}\label{prob:finiteconvolution}
 \Psi^\varepsilon(t)=
 \sum_{\substack{0<s\leq t\\ \text{retained jump at }s}}
            S(t-s)Gz_s.
\end{equation}
Here $z_s$ is the mark of the retained atom $(s,z_s)$ of $N$.
On the segment starting at \(t_j\), variation of constants gives
\[
 u^\varepsilon(t)=S(t-t_j)w_j
 -i\int_{t_j}^tS(t-s)\mathcal N(u^\varepsilon(s))\,\mathrm ds.
\]
Substituting the recursion for \(w_j\) and using
\(S(t-t_j)S(t_j-s)=S(t-s)\), induction over \(j\) yields
\[
 u^\varepsilon(t)=S(t)u_0
 -i\int_0^tS(t-s)\mathcal N(u^\varepsilon(s))\,\mathrm ds
 +\sum_{t_j\leq t}S(t-t_j)Gz_j.
\]
Together with \eqref{prob:finiteconvolution}, this is
\eqref{prob:cutoffmild}. At every retained time \(t_j\),
\[
 v^\varepsilon(t_j)-v^\varepsilon(t_j-)=Gz_j-Gz_j=0.
\]
Thus \(v^\varepsilon\) is continuous on \([0,T]\).

On each segment \(I_j=[t_j,t_{j+1}]\),
\(0\leq j\leq K\), use the pre-jump values
at the right endpoint. The deterministic solution belongs to
\(X^1(I_j)\), while the convolution on this segment is
\(S(t-t_j)\Psi^\varepsilon(t_j)\). Hence the free evolution estimate
gives
\[
 \|v^\varepsilon\|_{X^1(I_j)}
 \leq\|\mathcal U(\,\cdot-t_j,w_j)\|_{X^1(I_j)}
       +\|\Psi^\varepsilon(t_j)\|_{H^1}<\infty.
\]
Here and for the last segment, a segment of length zero is omitted.
The continuous remainders have matching endpoint values because
their jumps vanish. Lemma~\ref{det:concatenation} therefore gives
\[
 \|v^\varepsilon\|_{X^1([0,T])}
 \leq C_{\mathrm{con}}\sum_j
                  \|v^\varepsilon\|_{X^1(I_j)}<\infty,
\]
which proves \eqref{prob:cutoffremainder}.
This completes the proof.
\end{proof}

\subsection{Uniform energy moments from finite-jump identities}

Recall the functional \(F\) in \eqref{eq:energyfunctional}. Thus
\[
 F(w)=1+\|w\|_{L^2}^2+\frac12\|\nabla w\|_{L^2}^2
                      +\frac16\|w\|_{L^6}^6.
\]
We will use repeatedly that
\begin{equation}\label{prob:Fcoercivity}
 1+\frac12\|w\|_{H^1}^2\le F(w)
 \lesssim\bigl(1+\|w\|_{H^1}^6\bigr).
\end{equation}
The lower bound follows by retaining the quadratic terms of \(F\);
the upper bound uses \(H^1(\mathbb T^3)\hookrightarrow L^6(\mathbb T^3)\).
The mass and energy conservation laws make \(F(u^\varepsilon)\)
constant between consecutive retained jumps.

\begin{lemma}\label{prob:taylor}
For \(w\in H^1\) and \(z\in Z_0\), put
\begin{align}
 h(w,z)&=F(w+Gz)-F(w),\label{prob:hdef}\\
 R(w,z)&=F(w+Gz)-F(w)-DF(w)[Gz].\label{prob:Rdef}
\end{align}
There is a finite constant \(C_G\), depending only on
\(\|G\|_{\mathcal L(\mathbb R^m,H^1)}\) and
\(\|G\|_{\mathcal L(\mathbb R^m,L^\infty)}\), such that
\begin{align}
 |DF(w)[Gz]|&\le C_G|z|F(w),\label{prob:firstincrement}\\
 0\le R(w,z)&\le C_G|z|^2F(w),\label{prob:secondincrement}\\
 |h(w,z)|&\le C_G|z|F(w).\label{prob:fullincrement}
\end{align}
After increasing \(C_G\) if necessary, there is \(A_G\ge1\) such that
\begin{equation}\label{prob:overshoot}
 F(w+Gz)\le A_GF(w),\qquad w\in H^1,\ z\in Z_0.
\end{equation}
\end{lemma}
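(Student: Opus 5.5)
The plan is to expand $F(w+g)$ explicitly for $g=Gz$. Set $\|g\|_{H^1}\le c_1|z|$ and $\|g\|_\infty\le c_\infty|z|$, where $c_1=\|G\|_{\mathcal L(\mathbb R^m,H^1)}$ and $c_\infty=\|G\|_{\mathcal L(\mathbb R^m,L^\infty)}$; the latter is finite since $H^3\hookrightarrow L^\infty$. Then handle the three pieces of $F$ separately, keeping in mind that $|z|\le1$ on $Z_0$.

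For the quadratic part $Q(w)=\|w\|_2^2+\frac12\|\nabla w\|_2^2$, the expansion is exact:
\[
 Q(w+g)-Q(w)=2\operatorname{Re}\langle w,g\rangle+\operatorname{Re}\langle\nabla w,\nabla g\rangle+Q(g).
\]
So the first-order term is bounded by $C\|w\|_{H^1}|z|\le C|z|F(w)$, using $\|w\|_{H^1}\le\sqrt{2F(w)}\le 2F(w)$. The remainder $Q(g)\ge0$ satisfies $Q(g)\le c_1^2|z|^2\le c_1^2|z|^2F(w)$, because $F\ge1$.

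For the sextic part $P(w)=\frac16\int|w|^6$, I will use the pointwise real Taylor formula for $\phi(\zeta)=\frac16|\zeta|^6$ on $\mathbb C\cong\mathbb R^2$. Its first derivative is $D\phi(\zeta)[\gamma]=|\zeta|^4\operatorname{Re}(\zeta\bar\gamma)$. Since $|\zeta|^6$ is a convex function of $\zeta$, the remainder is nonnegative; this is the source of the lower bound $R\ge0$, together with $Q(g)\ge0$.

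For the upper bounds I use $|D^2\phi(\zeta)|\lesssim|\zeta|^4$. This gives
\[
 |DP(w)[g]|\le\|g\|_\infty\|w\|_5^5,
\]
and
\[
 0\le P(w+g)-P(w)-DP(w)[g]\lesssim\|g\|_\infty^2\int(|w|+|g|)^4\lesssim|z|^2\bigl(\|w\|_4^4+c_\infty^4\bigr).
\]
On the normalized torus, Hölder gives $\|w\|_5^5\le\|w\|_6^5\le1+\|w\|_6^6\le 6F(w)$ and $\|w\|_4^4\le 1+\|w\|_6^6\lesssim F(w)$. Combined with $F\ge1$, these yield \eqref{prob:firstincrement} and \eqref{prob:secondincrement}, with $C_G$ depending on $c_1$ and $c_\infty$.

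The remaining two bounds follow from the first two. The full increment bound \eqref{prob:fullincrement} is $|h|\le|DF(w)[Gz]|+R\le C_G(|z|+|z|^2)F(w)\le 2C_G|z|F(w)$, since $|z|\le1$; I then enlarge $C_G$. The overshoot bound \eqref{prob:overshoot} is then immediate, with $A_G=1+C_G$.

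The only delicate step is the second-order bound on the sextic remainder. The obstacle is to control it by $F(w)$ linearly, without a factor like $\|w\|_{H^1}^4$ entering through $L^6$. I handle this by placing the perturbation in $L^\infty$ and using only $L^4$ or $L^5$ norms of $w$, which are dominated by $1+\|w\|_6^6$ because the measure is normalized.
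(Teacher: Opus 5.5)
Your proposal is correct and takes essentially the same route as the paper: an explicit first derivative, the integral Taylor remainder with the perturbation placed in $L^\infty$ and $w$ in $L^4$, and the normalized measure to absorb lower Lebesgue norms of $w$ into $F(w)\ge1$. The differences are cosmetic: you get $R\ge0$ from convexity of $|\zeta|^6$ rather than from the explicit nonnegative formula for $D^2F(w)[g,g]$, and you bound the first-order sextic term by $\|g\|_\infty\|w\|_5^5$ instead of $\|w\|_6^5\|g\|_6$.
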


\begin{proof}
All derivatives are real Fr\'echet derivatives on the complex Sobolev
space. Since \(H^1\hookrightarrow L^6\), the functional \(F\) is a
continuous polynomial of degree six in the real and imaginary parts
of \(w\). In particular, the following derivatives and Taylor formula
are valid directly on \(H^1\).
Writing \(g=Gz\), we have
\begin{equation}\label{prob:DF}
 \begin{split}
 DF(w)[g]={}&
 2\operatorname{Re}\int_{\mathbb T^3}\overline w g
 +\operatorname{Re}\int_{\mathbb T^3}
                  \nabla\overline w\cdot\nabla g\\
 &+\operatorname{Re}\int_{\mathbb T^3}|w|^4\overline w g.
 \end{split}
\end{equation}
Cauchy--Schwarz and the boundedness of \(G\) give
\[
 \left|2\operatorname{Re}\int\overline w g
       +\operatorname{Re}\int\nabla\overline w\cdot\nabla g\right|
 \lesssim_G |z|\|w\|_{H^1}
 \lesssim_G |z|F(w)^{1/2}.
\]
For the last term, H\"older's inequality gives
\[
 \left|\int |w|^4\overline w g\right|
 \le\|w\|_{L^6}^5\|g\|_{L^6}
 \lesssim_G |z|F(w)^{5/6}.
\]
As \(F(w)\ge1\), these estimates prove
\eqref{prob:firstincrement}.

The second derivative in the repeated direction \(g\) is
\begin{equation}\label{prob:D2F}
 \begin{split}
 D^2F(w)[g,g]={}&2\|g\|_{L^2}^2+\|\nabla g\|_{L^2}^2\\
 &+\int_{\mathbb T^3}|w|^4|g|^2
   +4\int_{\mathbb T^3}|w|^2
             \bigl(\operatorname{Re}(\overline wg)\bigr)^2.
 \end{split}
\end{equation}
Every term is nonnegative, and the last two terms together are at most
\(5\|g\|_{L^\infty}^2\|w\|_{L^4}^4\).
For \(0\leq s\leq1\), the normalized volume of the torus and
\(|z|\leq1\) give
\[
 \|w+sg\|_{L^4}^4
 \leq8\bigl(\|w\|_{L^4}^4+\|g\|_{L^4}^4\bigr)
 \lesssim_G F(w)^{2/3}+|z|^4
 \lesssim_G F(w).
\]
Consequently,
\[
 \begin{split}
 D^2F(w+sg)[g,g]
 &\leq2\|g\|_{H^1}^2
       +5\|g\|_{L^\infty}^2\|w+sg\|_{L^4}^4\\
 &\lesssim_G |z|^2F(w).
 \end{split}
\]
The integral remainder formula reads
\[
 R(w,z)=\int_0^1(1-s)D^2F(w+sGz)[Gz,Gz]\,\mathrm ds.
\]
It proves the nonnegativity of \(R\) and the upper bound
\eqref{prob:secondincrement}. Combining the first two bounds and using
\(|z|^2\le|z|\) gives \eqref{prob:fullincrement}. Finally,
\[
 F(w+Gz)=F(w)+h(w,z)\le(1+C_G)F(w),
\]
which proves \eqref{prob:overshoot}.
This completes the proof.
\end{proof}

\begin{proposition}\label{prob:energy}
For every \(T<\infty\), there is
\[
 C_T=C\bigl(T,m_2,
     \|G\|_{\mathcal L(\mathbb R^m,H^1)},
     \|G\|_{\mathcal L(\mathbb R^m,L^\infty)}\bigr)<\infty
\]
such that
\begin{equation}\label{prob:maxF}
 \sup_{0<\varepsilon<1}
 \mathbb E\sup_{0\leq t\leq T}F(u^\varepsilon(t))^2
 \le C_TF(u_0)^2.
\end{equation}
The constants can be chosen uniformly if \(G\) is replaced by
\(\lambda G\), \(0\le\lambda\le1\).
\end{proposition}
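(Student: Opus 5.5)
Since $u^\varepsilon$ has finitely many retained jumps on $[0,T]$ and $F$ is conserved between them (Proposition~\ref{prob:cutoff} together with mass and energy conservation in Proposition~\ref{det:unforced}), the process $F(u^\varepsilon(t))$ is a pure jump process. We therefore have the exact pathwise identity
\[
 F(u^\varepsilon(t))=F(u_0)+\int_0^t\int_{Z_\varepsilon}h(u^\varepsilon(s-),z)\,N(\mathrm ds,\mathrm dz).
\]
Compensating gives
\[
 F(u^\varepsilon(t))=F(u_0)+\mathcal M^\varepsilon(t)+\int_0^t\int_{Z_\varepsilon}h(u^\varepsilon(s-),z)\,\nu(\mathrm dz)\,\mathrm ds,
\]
where $\mathcal M^\varepsilon$ is the $\widetilde N$-integral of $h(u^\varepsilon(s-),z)$. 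In the drift we split $h=DF[Gz]+R$. The first-order term integrates to zero by the symmetry identity \eqref{prob:zerodrift}, because $DF(w)[\cdot]$ is real linear. This is exactly the cancellation that makes the bounds uniform in $\varepsilon$. By \eqref{prob:secondincrement} the remaining drift is nonnegative and bounded by $C_Gm_2\int_0^tF(u^\varepsilon(s))\,\mathrm ds$.

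To make everything integrable we localize with $\tau_n=\inf\{t:F(u^\varepsilon(t))>n\}$. Before $\tau_n$ we have $F\le n$, and the overshoot bound \eqref{prob:overshoot} gives $F(u^\varepsilon(\tau_n))\le A_Gn$. The integrand of $\mathcal M^\varepsilon$ is predictable and, by \eqref{prob:fullincrement}, bounded by $C_G|z|A_Gn$, so the stopped martingale is square integrable. Set $\Phi_n(t)=\mathbb E\sup_{s\le t\wedge\tau_n}F(u^\varepsilon(s))^2$. Squaring the decomposition, we apply Doob's (or BDG's) inequality to the martingale part. The Poisson isometry with \eqref{prob:fullincrement} gives
\[
 \mathbb E\sup_{s\le t\wedge\tau_n}|\mathcal M^\varepsilon(s)|^2\le4C_G^2m_2\,\mathbb E\int_0^{t\wedge\tau_n}F(u^\varepsilon(s-))^2\,\mathrm ds.
\]
For the drift, Cauchy--Schwarz in time gives a bound by $C_G^2m_2^2\,t\int_0^t\Phi_n(s)\,\mathrm ds$. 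Together these yield
\[
 \Phi_n(t)\le3F(u_0)^2+C(1+T)\int_0^t\Phi_n(s)\,\mathrm ds,
\]
with $C$ depending only on $C_G$ and $m_2$. Since $\Phi_n\le A_G^2n^2<\infty$, Gronwall's inequality gives $\Phi_n(T)\le3e^{C(1+T)T}F(u_0)^2$.

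It remains to let $n\to\infty$. We have $\tau_n\to\infty$ almost surely, because on $[0,T]$ there are finitely many jumps and $F$ is finite on each segment. Monotone convergence then gives \eqref{prob:maxF} with a constant independent of $\varepsilon$, since $\nu(Z_\varepsilon)$ never enters. Replacing $G$ by $\lambda G$ with $0\le\lambda\le1$ only decreases the operator norms, so $C_G$ and $A_G$ in Lemma~\ref{prob:taylor} can be kept fixed, and the constants are uniform in $\lambda$.

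The main technical point is justifying the jump formula and the isometry with the predictable left limit. This rests on the finite-activity structure: the integrals against $N$ and against $\nu\,\mathrm ds$ separately exist because $\nu(Z_\varepsilon)<\infty$, and the integrand is bounded after stopping. Everything else is routine once the symmetric cancellation of the first-order term has been used.
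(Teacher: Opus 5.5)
Your proof is correct, but it organizes the estimate differently from the paper.

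\textbf{The paper's route.} The paper first proves a fixed-time bound. It applies the jump product formula to $(F^\varepsilon)^2$, which gives
$\mathbb E(F^\varepsilon_{t\wedge\tau_n})^2=F_0^2+\mathbb E\int_0^t\mathbf 1_{\{0<s\le\tau_n\}}\bigl[2F^\varepsilon_{s-}a_\varepsilon(u^\varepsilon(s-))+\int_{Z_\varepsilon}|h(u^\varepsilon(s-),z)|^2\,\nu(dz)\bigr]\,ds$.
It then runs Gronwall on this scalar function to get $\mathbb E(F^\varepsilon_{t\wedge\tau_n})^2\le F_0^2e^{3bt}$, and removes the localization by Fatou. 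Only after that does it use the fixed-time bound to show that the \emph{unstopped} $M^\varepsilon$ is a square-integrable martingale on $[0,T]$. Doob's inequality for $M^\varepsilon$ and Cauchy--Schwarz on the drift then give the maximal estimate.

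\textbf{Your route.} You run Gronwall directly on the stopped maximal quantity $\Phi_n(t)$, with Doob's inequality and the Poisson isometry applied to the stopped martingale inside the Gronwall loop. The loop closes because of the almost-everywhere replacement $\mathbf 1_{\{s\le\tau_n\}}F(u^\varepsilon(s-))^2\le\sup_{r\le s\wedge\tau_n}F(u^\varepsilon(r))^2$. You then pass to $n\to\infty$ by monotone convergence, which is legitimate since $\tau_n$ is nondecreasing and exceeds $T$ eventually.

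\textbf{What each buys.} Your argument is shorter. It needs only the compensated identity for $F$ itself, not the product formula for $F^2$ or the martingale property of the integral of $2F_{s-}h+h^2$. The paper's argument yields the sharper intermediate fixed-time estimate $\mathbb E F(u^\varepsilon(t))^2\le F_0^2e^{3bt}$. By contrast, Cauchy--Schwarz in time inside your loop produces a constant of order $e^{C(1+T)T}$. Since the statement only asks for some finite $C_T$, uniform in $\varepsilon$ and in $\lambda$, this difference is immaterial.
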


\begin{proof}
Fix a cutoff and abbreviate
\[
 F_t^\varepsilon=F(u^\varepsilon(t)),\qquad F_0=F(u_0).
\]
The conservation laws on the deterministic segments and the finite
number of retained jumps give the exact pathwise identity
\begin{equation}\label{prob:finitebalance}
 F_t^\varepsilon
 =F_0+\int_0^t\int_{Z_\varepsilon}
          h(u^\varepsilon(s-),z)\,N(\mathrm ds,\mathrm dz).
\end{equation}
This identity holds for \(H^1\) data: the deterministic flow conserves
the mass and energy at that regularity, and each term of the finite
jump sum is a well-defined difference of continuous \(H^1\)
functionals.

Define
\begin{equation}\label{prob:adrift}
 a_\varepsilon(w)=
       \int_{Z_\varepsilon}R(w,z)\,\nu(\mathrm dz).
\end{equation}
The integrable symmetry cancellation \eqref{prob:zerodrift} and the
real linearity of \(DF(w)\) give
\[
 \int_{Z_\varepsilon}h(w,z)\,\nu(\mathrm dz)
 =DF(w)\left[\int_{Z_\varepsilon}Gz\,\nu(\mathrm dz)\right]
    +\int_{Z_\varepsilon}R(w,z)\,\nu(\mathrm dz)
 =a_\varepsilon(w).
\]
Thus \eqref{prob:finitebalance} can be written, initially after stopping,
as
\begin{equation}\label{prob:compensatedbalance}
 F_t^\varepsilon
 =F_0+\int_0^t a_\varepsilon(u^\varepsilon(s-))\,\mathrm ds
       +M_t^\varepsilon,
\end{equation}
where
\begin{equation}\label{prob:energymartingale}
 M_t^\varepsilon=
  \int_0^t\int_{Z_\varepsilon}
        h(u^\varepsilon(s-),z)\,\widetilde N(\mathrm ds,\mathrm dz).
\end{equation}
Lemma~\ref{prob:taylor} implies that a constant \(b\ge0\), independent
of \(\varepsilon\), can be chosen so that
\begin{equation}\label{prob:driftbracket}
 0\le a_\varepsilon(w)\le bF(w),\qquad
 \int_{Z_\varepsilon}|h(w,z)|^2\nu(\mathrm dz)\le bF(w)^2.
\end{equation}
For example, \(b=C_Gm_2\) is sufficient after increasing \(C_G\).
We first obtain a fixed-time second moment by localization.
For an integer \(n>F_0\), let
\[
 \tau_n=\inf\{t\ge0:F_t^\varepsilon\ge n\}.
\]
Since \(F^\varepsilon\) is constant between retained jumps,
\(\tau_n\) is the first retained jump time at which its value is
at least \(n\), and is therefore a stopping time.
Before \(\tau_n\), \(F^\varepsilon<n\).
At the stopping jump, \eqref{prob:overshoot} gives
\begin{equation}\label{prob:stoppedbound}
 \sup_{t\ge0}F_{t\wedge\tau_n}^\varepsilon\le A_Gn.
\end{equation}
This bound makes the compensated integral stopped at \(\tau_n\)
square integrable.
The stopping indicator \(\mathbf1_{\{0<s\leq\tau_n\}}\) is predictable;
it includes the jump at the stopping time.

The scalar jump product formula for \((F^\varepsilon)^2\) gives
\begin{equation}\label{prob:squarestopped}
 \begin{split}
 \mathbb E(F_{t\wedge\tau_n}^\varepsilon)^2
 =F_0^2+\mathbb E\int_0^t\mathbf1_{\{0<s\leq\tau_n\}}
 \bigg[&
 2F_{s-}^\varepsilon a_\varepsilon(u^\varepsilon(s-))\\
 &+\int_{Z_\varepsilon}
     |h(u^\varepsilon(s-),z)|^2\nu(\mathrm dz)
 \bigg]\mathrm ds.
 \end{split}
\end{equation}
Indeed, the change in the square at a jump is
\[
 (F_{s-}^\varepsilon+h)^2-(F_{s-}^\varepsilon)^2
 =2F_{s-}^\varepsilon h+h^2.
\]
After compensating the jump sum, its drift is the integrand of
\eqref{prob:squarestopped}, and its martingale integrand is
\[
 2F_{s-}^\varepsilon h(u^\varepsilon(s-),z)
       +h(u^\varepsilon(s-),z)^2.
\]
On \(\{0<s\leq\tau_n\}\), its absolute value is at most
\(C_Gn^2|z|\), using \(|z|\le1\).
Its second moment is therefore integrable, which justifies the
zero expectation used in \eqref{prob:squarestopped}.

By \eqref{prob:driftbracket}, the bracketed drift is at most
\(3b(F_{s-}^\varepsilon)^2\). With
\(y_n(t)=\mathbb E(F_{t\wedge\tau_n}^\varepsilon)^2\), this implies
\[
 y_n(t)\le F_0^2+3b\int_0^t y_n(s)\,\mathrm ds.
\]
To justify the integral comparison, for Lebesgue almost every \(s\)
on each path,
\[
 \mathbf1_{\{0<s\leq\tau_n\}}(F_{s-}^\varepsilon)^2
 =\mathbf1_{\{0<s\leq\tau_n\}}(F_s^\varepsilon)^2
 \leq(F_{s\wedge\tau_n}^\varepsilon)^2.
\]
Tonelli's theorem then bounds its expected integral by
\(\int_0^t y_n(s)\,\mathrm ds\).
Gronwall's inequality yields
\begin{equation}\label{prob:stoppedsecond}
 \mathbb E(F_{t\wedge\tau_n}^\varepsilon)^2
 \le F_0^2e^{3bt}.
\end{equation}

For a fixed cutoff, the global solution has finitely many jumps on
\([0,T]\). Writing \(K_T^\varepsilon=N((0,T]\times Z_\varepsilon)\),
iteration of \eqref{prob:overshoot} gives
\[
 \sup_{0\leq t\leq T}F_t^\varepsilon
 \leq A_G^{K_T^\varepsilon}F_0<\infty
 \quad\hbox{almost surely}.
\]
Thus \(\tau_n>T\) for all sufficiently large \(n\), almost surely.
Taking integer \(T\) yields \(\tau_n\to\infty\) almost surely.
Fatou's lemma applied to \eqref{prob:stoppedsecond} gives
\begin{equation}\label{prob:fixedsecond}
 \mathbb E(F_t^\varepsilon)^2\le F_0^2e^{3bt},
 \qquad t\ge0.
\end{equation}
The bound is uniform in the cutoff.

We can now remove the localization from the martingale estimate.
The predictable bracket integrability follows from
\eqref{prob:driftbracket} and \eqref{prob:fixedsecond}:
\[
 \mathbb E\int_0^T\int_{Z_\varepsilon}
       |h(u^\varepsilon(s-),z)|^2\nu(\mathrm dz)\,\mathrm ds
 \le bF_0^2\int_0^T e^{3bs}\,\mathrm ds.
\]
Thus \(M^\varepsilon\) in \eqref{prob:energymartingale} is a
square-integrable martingale on \([0,T]\), and Doob's inequality gives
\begin{equation}\label{prob:energydoob}
 \mathbb E\sup_{0\leq t\leq T}|M_t^\varepsilon|^2
 \le4bF_0^2\int_0^T e^{3bs}\,\mathrm ds.
\end{equation}
Finally, apply \((x+y+z)^2\le3(x^2+y^2+z^2)\) to
\eqref{prob:compensatedbalance} and use Cauchy--Schwarz in time:
\[
 \begin{split}
 \mathbb E\sup_{0\leq t\leq T}(F_t^\varepsilon)^2
 &\le3F_0^2
   +3Tb^2\int_0^T\mathbb E(F_s^\varepsilon)^2\,\mathrm ds
   +3\mathbb E\sup_{0\leq t\leq T}|M_t^\varepsilon|^2\\
 &\le
 \left[3+(3Tb^2+12b)\int_0^T e^{3bs}\,\mathrm ds\right]F_0^2.
 \end{split}
\]
This proves \eqref{prob:maxF}. Each Taylor estimate can use the same
upper constant for \(\lambda G\), \(0\le\lambda\le1\), proving the last
assertion.
This completes the proof.
\end{proof}

\subsection{Measurability of the critical remainders}

The uniform distance between two c\`adl\`ag \(H^1\)-valued paths is
measurable: its supremum can be taken over a fixed countable dense set
of times, together with the right endpoint. For the \(X^1\) distances,
the following lemma places the approximating remainders in a common
separable subspace.

\begin{lemma}
\label{prob:measurable}
Fix \(T<\infty\) and \(u_0\in H^1\). There is a closed separable
linear subspace \(\mathcal X_T\) of \(X^1([0,T])\) such that every
\(v^\varepsilon\) has a Borel measurable \(\mathcal X_T\)-valued
realization. In particular, the random variables
\[
 \|v^\varepsilon\|_{X^1([0,T])},\qquad
 \|v^\varepsilon-v^\eta\|_{X^1([0,T])}
\]
are measurable.
\end{lemma}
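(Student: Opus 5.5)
We will realize each remainder $v^\varepsilon$ as the image of its finite-jump data under a continuous map into $X^1([0,T])$, and take $\mathcal X_T$ to be the closed linear span of the range. On $[0,T]$ the retained jumps are described by the random finite configuration $\omega\mapsto(K,t_1,\ldots,t_K,z_1,\ldots,z_K)$, with $K=N((0,T]\times Z_\varepsilon)$. For each fixed $k\ge0$ this configuration takes values in the separable metric space
\[
 \Sigma_k=\{(t,z)\in(0,T]^k\times Z_0^k:t_1<\cdots<t_k\}.
\]
The configuration is measurable, as a function of the Poisson random measure, on each event $\{K=k\}$. Disjoint union over $k$ gives a Polish parameter space $\Sigma=\bigsqcup_k\Sigma_k$.

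The key step is to define $\Phi_k:\Sigma_k\to X^1([0,T])$ by $\Phi_k(t,z)=v$, where $v$ is the continuous remainder built in the proof of Proposition~\ref{prob:cutoff}. That is, $v=u-f$ with forcing
\[
 f(s)=\sum_{t_j\le s}S(s-t_j)Gz_j,
\]
and $u$ given by the deterministic flow between jumps. By construction $v^\varepsilon=\Phi_K(t_1,\ldots,z_K)$ pathwise, and the point is to show that $\Phi_k$ is continuous.

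To prove this continuity, fix $(t,z)\in\Sigma_k$ and let $(t',z')\to(t,z)$. The $H^1$ norms of all the solutions are bounded locally uniformly: the $F$-functional is conserved between jumps, and at each jump \eqref{prob:overshoot} gives $F$ at most $A_G^kF(u_0)$. The forcing paths are bounded in $L^\infty H^3$ by $k\max_j\|Gz_j\|_{H^3}$. Theorem~\ref{det:stability}, in its $Z$-form \eqref{det:stabilityZ}, therefore reduces continuity to showing $\|f'-f\|_{Z([0,T])}\to0$. By Lemma~\ref{det:H3toZ} it suffices to have $f'\to f$ in $L^{82}([0,T];H^3)$. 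Each summand is
\[
 \mathbf 1_{[t_j,T]}(s)\,S(s-t_j)Gz_j .
\]
Convergence of the marks is handled by linearity of $G$. Convergence of the times is handled by the displayed identity $\|h\mathbf1_{[s,T]}-h\mathbf1_{[\tilde s,T]}\|_{L^{82}H^3}=|s-\tilde s|^{1/82}\|h\|_{H^3}$ together with strong continuity of $S$ on $H^3$, which gives $S(\cdot-t_j')Gz_j\to S(\cdot-t_j)Gz_j$ uniformly. The remainders all start at $u_0$, so the initial differences vanish. Hence $\Phi_k$ is continuous.

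Since each $\Sigma_k$ is separable, the range $\Phi_k(\Sigma_k)$ is separable in $X^1([0,T])$. We take $\mathcal X_T$ to be the closed linear span of $\bigcup_k\Phi_k(\Sigma_k)$, over all $k$; this space does not depend on $\varepsilon$, because the parameter space $Z_0^k$ already contains every cutoff. Then $v^\varepsilon=\Phi_K\circ(\text{configuration})$ is a composition of a measurable map with a continuous map, hence a Borel $\mathcal X_T$-valued random variable.

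For two cutoffs $\varepsilon<\eta$, the $\eta$-configuration is a measurable selection (marks with $|z|>\eta$) of the $\varepsilon$-configuration. So $(v^\varepsilon,v^\eta)$ is jointly measurable into the separable space $\mathcal X_T\times\mathcal X_T$, whose Borel field is the product field. It follows that the norm of the difference is measurable.

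The main obstacle is the continuity of $\Phi_k$ when jump times collide or approach $T$. Ordered times keep the configurations inside $\Sigma_k$, so coincident times do not occur there. A time tending to $T$ is harmless because the same $L^{82}$ identity covers it.
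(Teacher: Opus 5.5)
Your proposal is correct and follows essentially the paper's route. You parameterize the retained jumps by ordered times and marks, show the parameter-to-remainder map is continuous using the uniform bound $A_G^kF(u_0)$, $L^{82}H^3$ convergence of the forcing and the $Z$-form of Theorem~\ref{det:stability}, and take $\mathcal X_T$ to be the closed span of the ranges. The differences are cosmetic: the paper excludes $t_\ell=T$ and explicitly sets $v^\varepsilon=0$ on the null set of coincident arrivals, and your selection argument for $(v^\varepsilon,v^\eta)$ is unnecessary, since separate Borel measurability into the separable space $\mathcal X_T$ already gives joint measurability.
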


\begin{proof}
For a nonnegative integer \(\ell\), consider the finite-dimensional
parameter set
\[
 \mathcal P_\ell=
 \left\{(t_1,\ldots,t_\ell,z_1,\ldots,z_\ell):
       0<t_1<\cdots<t_\ell<T,\quad z_j\in Z_0\right\}.
\]
Equip $\mathcal P_\ell$ with the topology inherited from
$\mathbb R^{\ell(1+m)}$ and its Borel sigma field.
For \(\ell=0\), this is a singleton. Given \(p\in\mathcal P_\ell\), put
\begin{equation}\label{prob:parameterforcing}
 \Psi_p(t)=\sum_{j=1}^{\ell}
         \mathbf1_{[t_j,T]}(t)S(t-t_j)Gz_j.
\end{equation}
Let \(u_p\) be the deterministic finite-jump construction with these
times and marks, and let \(V_\ell(p)=u_p-\Psi_p\).
The proof of Proposition~\ref{prob:cutoff} gives
\(V_\ell(p)\in X^1([0,T])\).

First, let \(p_n\to p\) in \(\mathcal P_\ell\), and denote
their respective coordinates by \(t_{j,n},z_{j,n}\) and \(t_j,z_j\).
On the interval where both \(j\)-th summands in
\eqref{prob:parameterforcing} are present, unitarity gives
\[
 \begin{split}
 &\|S(t-t_{j,n})Gz_{j,n}-S(t-t_j)Gz_j\|_{H^3}\\
 &\qquad\leq\|G(z_{j,n}-z_j)\|_{H^3}
       +\|(S(t_j-t_{j,n})-\mathrm{Id})Gz_j\|_{H^3}.
 \end{split}
\]
Where exactly one summand is present, its norm is at most
\(\|G\|_{\mathcal L(\mathbb R^m,H^3)}\), and that interval has length
\(|t_{j,n}-t_j|\). The triangle inequality in \(L^{82}([0,T];H^3)\)
therefore yields
\[
 \begin{split}
 \|\Psi_{p_n}-\Psi_p\|_{L^{82}([0,T];H^3)}
 \leq{}&T^{1/82}\sum_{j=1}^{\ell}
       \bigl(\|G(z_{j,n}-z_j)\|_{H^3}\\
 &\hspace{32mm}
       +\|(S(t_j-t_{j,n})-\mathrm{Id})Gz_j\|_{H^3}\bigr)\\
 &+\|G\|_{\mathcal L(\mathbb R^m,H^3)}
               \sum_{j=1}^{\ell}|t_{j,n}-t_j|^{1/82}
 \longrightarrow0.
 \end{split}
\]
The convergence follows from the boundedness of \(G\) and the
strong continuity of \(S\). Lemma~\ref{prob:Zembed} gives
\(\|\Psi_{p_n}-\Psi_p\|_{Z([0,T])}\to0\).

Second, the bounds required in the deterministic stability theorem
are uniform over \(\mathcal P_\ell\). Conservation between the prescribed
jumps and \eqref{prob:overshoot} imply
\[
 \sup_{0\leq t\leq T}F(u_p(t))\le A_G^\ell F(u_0).
\]
Also,
\[
 \sup_{0\leq t\leq T}\|\Psi_p(t)\|_{H^3}
 \leq\ell\|G\|_{\mathcal L(\mathbb R^m,H^3)}.
\]
By \eqref{prob:Fcoercivity} and \(V_\ell(p)=u_p-\Psi_p\),
\[
 \sup_{0\leq t\leq T}\|V_\ell(p)(t)\|_{H^1}
 \leq\bigl(2A_G^\ell F(u_0)\bigr)^{1/2}
             +\ell\|G\|_{\mathcal L(\mathbb R^m,H^3)}.
\]
Both bounds are independent of \(p\in\mathcal P_\ell\).
Apply the \(Z\)-distance version of
Theorem~\ref{det:stability} to the two continuous remainders.
The uniform bounds just obtained give
\[
 \|V_\ell(p_n)-V_\ell(p)\|_{X^1([0,T])}
 \lesssim_{T,\ell,u_0,G}
       \|\Psi_{p_n}-\Psi_p\|_{Z([0,T])}\longrightarrow0.
\]
Thus \(V_\ell\) is continuous.

Each \(\mathcal P_\ell\) is a separable metric space, so its continuous
image in \(X^1([0,T])\) is separable.
Let \(\mathcal X_T\) be the closed linear span of
\(\bigcup_{\ell\ge0}V_\ell(\mathcal P_\ell)\).
It is a closed separable subspace of the Banach space \(X^1([0,T])\),
equipped with the inherited norm and its Borel sigma field.
For a fixed cutoff, put
\[
 K_t^\varepsilon=N((0,t]\times Z_\varepsilon),\qquad
 T_j^\varepsilon=\inf\{t\geq0:K_t^\varepsilon\geq j\},\quad j\geq1.
\]
The arrival times \(T_j^\varepsilon\) are stopping times, and the
associated marks are measurable functions of the Poisson random
measure. Since the intensity in time is diffuse, almost surely
the retained arrivals are distinct and neither endpoint \(0,T\)
is an arrival time. On the event \(\{K_T^\varepsilon=\ell\}\),
the ordered times and marks thus form a measurable random element
of \(\mathcal P_\ell\), and the remainder equals \(V_\ell\)
applied to this element.
For an open set \(O\subset\mathcal X_T\), the event
\(\{v^\varepsilon\in O\}\), up to the exceptional null set, is
the countable union
\[
 \bigcup_{\ell\geq0}
 \left\{K_T^\varepsilon=\ell,\
   (T_1^\varepsilon,\ldots,T_\ell^\varepsilon,
        z_1^\varepsilon,\ldots,z_\ell^\varepsilon)
                   \in V_\ell^{-1}(O)\right\},
\]
where \(z_j^\varepsilon\) denotes the mark at \(T_j^\varepsilon\).
Each event is measurable by the continuity of \(V_\ell\).
Assigning \(v^\varepsilon=0\) on the exceptional null set gives
the asserted Borel realization. The norm is continuous on
\(\mathcal X_T\), as is \((x,y)\mapsto\|x-y\|_{X^1([0,T])}\),
which proves the measurability of both displayed random variables.
This completes the proof.
\end{proof}

\subsection{Cauchy estimates on bounded events}

We next combine the energy bound with the deterministic bounded-set
estimates on events where the energy and convolution norms are bounded.

\begin{proposition}
\label{prob:cauchy}
For every \(T<\infty\), the family
\(\{\|v^\varepsilon\|_{X^1([0,T])}:0<\varepsilon<1\}\)
is bounded in probability. Moreover, if
\begin{equation}\label{prob:distance}
 d_T(\varepsilon,\eta)=
 \sup_{0\leq t\leq T}\|u^\varepsilon(t)-u^\eta(t)\|_{H^1}
 +\|v^\varepsilon-v^\eta\|_{X^1([0,T])},
\end{equation}
then
\begin{equation}\label{prob:cauchystatement}
 d_T(\varepsilon,\eta)\longrightarrow0
 \quad\hbox{in probability as }\max\{\varepsilon,\eta\}\downarrow0.
\end{equation}
More precisely, there are \(C_{T,u_0,G,\nu}<\infty\) and, for each
\(R\ge1\), \(K_T(R)<\infty\) such that
\begin{equation}\label{prob:cauchyquantitative}
 \mathbb P\bigl(d_T(\varepsilon,\eta)>\rho\bigr)
 \le \frac{C_{T,u_0,G,\nu}}{R^2}
   +\frac{4TK_T(R)^2}{\rho^2}\,
        q_3(\max\{\varepsilon,\eta\}),\qquad \rho>0.
\end{equation}
\end{proposition}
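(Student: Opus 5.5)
The plan is to localize on a good event where all the deterministic constants are fixed, and then convert the $H^3$ tail estimate of Lemma~\ref{prob:convolution} into the Cauchy bound through Corollary~\ref{det:totalfield}. Fix $T$ and $R\ge1$. Define the good event
\[
 \Omega_R(\varepsilon,\eta)=\Bigl\{\sup_{0\le t\le T}\bigl(F(u^\varepsilon(t))+F(u^\eta(t))\bigr)\le R,\
 \sup_{0\le t\le T}\bigl(\|\Psi^\varepsilon(t)\|_{H^3}+\|\Psi^\eta(t)\|_{H^3}\bigr)\le R\Bigr\}.
\]
Chebyshev's inequality gives the bound $\mathbb P(\Omega_R^c)\le C_{T,u_0,G,\nu}R^{-2}$, uniformly in the cutoffs. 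For the energy part we use Proposition~\ref{prob:energy}, through $\mathbb E\sup F^2\lesssim F(u_0)^2$. For the convolution part we use \eqref{prob:convolutionmax} with $j=3$, which gives $\mathbb E\sup\|\Psi^\varepsilon\|_{H^3}^2\le 4Tq_3$.

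On $\Omega_R$ we have $\|u^\varepsilon(t)\|_{H^1}\le(2R)^{1/2}$ by \eqref{prob:Fcoercivity}. Hence $\sup_t\|v^\varepsilon(t)\|_{H^1}\le (2R)^{1/2}+R=:R'$, and the same holds for $\eta$. The forcings $\Psi^\varepsilon,\Psi^\eta$ are c\`adl\`ag with pointwise $H^3$ suprema at most $K=R$.

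Proposition~\ref{det:continuation} then gives the bound $\|v^\varepsilon\|_{X^1([0,T])}\le A_T(R',R)$ on $\Omega_R$. This gives boundedness in probability directly: $\mathbb P(\|v^\varepsilon\|_{X^1}>A_T(R',R))\le\mathbb P(\Omega_R^c)\le C/R^2$. Measurability of these norms is provided by Lemma~\ref{prob:measurable}.

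The remainders $v^\varepsilon$ and $v^\eta$ solve \eqref{det:equation} with $a=0$, the same initial value $u_0$, and forcings $\Psi^\varepsilon,\Psi^\eta$. This follows from \eqref{prob:cutoffmild} together with $v=u-\Psi$. Since the initial data agree, the pointwise-supremum version of Corollary~\ref{det:totalfield} yields on $\Omega_R$
\[
 d_T(\varepsilon,\eta)\le K_T(R)\sup_{0\le t\le T}\|\Psi^\varepsilon(t)-\Psi^\eta(t)\|_{H^3},
\]
where $K_T(R):=C_T(R',R)$ is deterministic. We then apply Chebyshev again, now together with \eqref{prob:convolutionannulus}:
\[
 \mathbb P\bigl(\{d_T>\rho\}\cap\Omega_R\bigr)\le\frac{K_T(R)^2}{\rho^2}\,4Tq_3(\max\{\varepsilon,\eta\}).
\]
Adding $\mathbb P(\Omega_R^c)$ gives \eqref{prob:cauchyquantitative}. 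For \eqref{prob:cauchystatement}, first let $\max\{\varepsilon,\eta\}\downarrow0$ with $R$ fixed, using $q_3(r)\to0$, and then let $R\to\infty$.

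The step needing most care is making the application of the deterministic stability theorem legitimate pathwise. Theorem~\ref{det:stability} is stated for $L^\infty$ forcing bounds, and we must also ensure that $d_T$ is a random variable. For measurability, the sup part reduces to countably many times, and the $X^1$ part is handled by Lemma~\ref{prob:measurable}.

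For the bounds, the forcings are c\`adl\`ag, so the essential and pointwise suprema coincide up to the right endpoint. This is exactly the c\`adl\`ag clause of Corollary~\ref{det:totalfield}. The constant there depends only on $(T,R',R)$ and not on the jump configuration, which is why $K_T(R)$ can be taken deterministic and uniform over $\varepsilon,\eta$.

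A minor point is that the event $\Omega_R$ must be measurable. This holds because $F(u^\varepsilon(\cdot))$ is piecewise constant with finitely many jumps, and $\Psi^\varepsilon$ is c\`adl\`ag in $H^3$.
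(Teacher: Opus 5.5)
Your proposal is correct and follows the paper's proof essentially step for step. It uses the same good event controlled via Markov by Proposition~\ref{prob:energy} and \eqref{prob:convolutionmax}, the same remainder bound $R'=(2R)^{1/2}+R$, the $L^\infty H^3$ stability estimate with equal initial data, and Chebyshev combined with \eqref{prob:convolutionannulus}. The only differences are cosmetic: you invoke Proposition~\ref{det:continuation} and the c\`adl\`ag clause of Corollary~\ref{det:totalfield} directly, whereas the paper reruns the restart--concatenation bound and the splitting $u^\varepsilon-u^\eta=(v^\varepsilon-v^\eta)+(\Psi^\varepsilon-\Psi^\eta)$ inline, arriving at $K_T(R)=2C_T(R)+1$.
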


\begin{proof}
Let \(\mathcal A_R^{\varepsilon,\eta}\) be the event on which all four
quantities
\[
 \sup_{0\leq t\leq T}F(u^\varepsilon(t)),\quad
 \sup_{0\leq t\leq T}F(u^\eta(t)),\quad
 \sup_{0\leq t\leq T}\|\Psi^\varepsilon(t)\|_{H^3},\quad
 \sup_{0\leq t\leq T}\|\Psi^\eta(t)\|_{H^3}
\]
are at most \(R\). These are measurable by adapted c\`adl\`ag
path regularity. Proposition~\ref{prob:energy},
Lemma~\ref{prob:convolution}, and Markov's inequality give
\begin{equation}\label{prob:goodevent}
 \mathbb P\bigl((\mathcal A_R^{\varepsilon,\eta})^c\bigr)
 \le \frac{2C_TF(u_0)^2+8Tq_3}{R^2}.
\end{equation}
On \(\mathcal A_R^{\varepsilon,\eta}\),
\[
 \sup_{0\leq t\leq T}\|v^\varepsilon(t)\|_{H^1}
   +\sup_{0\leq t\leq T}\|v^\eta(t)\|_{H^1}
 \le2\sqrt{2R}+2R.
\]
Put \(R_*=\sqrt{2R}+R\), and take
\(\delta=\delta(R_*,R)\) and \(A=A(R_*,R)\) from
Proposition~\ref{det:restart}. Partition \([0,T]\) into at most
\(n_T=1+\lceil T/\delta\rceil\) intervals of length at most
\(\delta\). At each left endpoint, either remainder has \(H^1\)
norm at most \(R_*\). Local uniqueness identifies its restriction
with the solution supplied by the restart proposition, so its
\(X^1\) norm on that interval is at most \(A\).
Lemma~\ref{det:concatenation} then bounds its whole-interval norm
by
\[
 A_T(R):=C_{\mathrm{con}}n_TA.
\]
For a single cutoff these conclusions require only the event
\[
 \left\{\sup_{0\leq t\leq T}F(u^\varepsilon(t))\leq R,\quad
        \sup_{0\leq t\leq T}\|\Psi^\varepsilon(t)\|_{H^3}\leq R\right\}.
\]
Its complement has probability at most
\((C_TF(u_0)^2+4Tq_3)/R^2\), and hence
\[
 \sup_{0<\varepsilon<1}
 \mathbb P\bigl(\|v^\varepsilon\|_{X^1([0,T])}>A_T(R)\bigr)
 \leq \frac{C_TF(u_0)^2+4Tq_3}{R^2}.
\]
Given any positive probability threshold, choosing \(R\) large
enough proves boundedness in probability.

The initial values of the two remainders agree. The
\(L^\infty H^3\)-distance version of
Theorem~\ref{det:stability} gives, on
\(\mathcal A_R^{\varepsilon,\eta}\),
\[
 \|v^\varepsilon-v^\eta\|_{X^1([0,T])}
 \le C_T(R)\sup_{0\leq t\leq T}
          \|\Psi^\varepsilon(t)-\Psi^\eta(t)\|_{H^3}.
\]
The identity \(u^\varepsilon=v^\varepsilon+\Psi^\varepsilon\)
and \eqref{det:linear} give
\[
 \begin{split}
 \sup_{0\leq t\leq T}\|u^\varepsilon(t)-u^\eta(t)\|_{H^1}
 &\leq\|v^\varepsilon-v^\eta\|_{X^1([0,T])}\\
 &\quad+\sup_{0\leq t\leq T}
                 \|\Psi^\varepsilon(t)-\Psi^\eta(t)\|_{H^3}.
 \end{split}
\]
Adding the remainder distance, and setting \(K_T(R)=2C_T(R)+1\),
we obtain
\begin{equation}\label{prob:pathwisecomparison}
 d_T(\varepsilon,\eta)
 \le K_T(R)\sup_{0\leq t\leq T}
          \|\Psi^\varepsilon(t)-\Psi^\eta(t)\|_{H^3}
 \quad\hbox{on }\mathcal A_R^{\varepsilon,\eta}.
\end{equation}
By Lemma~\ref{prob:measurable}, the left-hand side is a random variable.
The event inclusion
\[
 \{d_T(\varepsilon,\eta)>\rho\}
 \subset(\mathcal A_R^{\varepsilon,\eta})^c
 \cup\left\{
   K_T(R)\sup_{0\leq t\leq T}
       \|\Psi^\varepsilon(t)-\Psi^\eta(t)\|_{H^3}>\rho
      \right\}
\]
and Chebyshev's inequality yield
\[
 \begin{split}
 \mathbb P(d_T(\varepsilon,\eta)>\rho)
 &\leq\mathbb P((\mathcal A_R^{\varepsilon,\eta})^c)\\
 &\quad+\frac{K_T(R)^2}{\rho^2}
      \mathbb E\sup_{0\leq t\leq T}
       \|\Psi^\varepsilon(t)-\Psi^\eta(t)\|_{H^3}^2\\
 &\leq\frac{2C_TF(u_0)^2+8Tq_3}{R^2}
     +\frac{4TK_T(R)^2}{\rho^2}
                     q_3(\max\{\varepsilon,\eta\}).
 \end{split}
\]
This proves \eqref{prob:cauchyquantitative}. For every fixed
\(\rho>0\) and \(R\geq1\), letting
\(\max\{\varepsilon,\eta\}\downarrow0\) gives
\[
 \limsup_{\max\{\varepsilon,\eta\}\downarrow0}
       \mathbb P(d_T(\varepsilon,\eta)>\rho)
 \leq\frac{2C_TF(u_0)^2+8Tq_3}{R^2}.
\]
Letting \(R\to\infty\) proves \eqref{prob:cauchystatement}.
This completes the proof.
\end{proof}

\subsection{Identification of the global limit}

\begin{proposition}
\label{prob:limit}
There are adapted processes \(u\) and \(v\), defined for all \(t\ge0\),
such that, almost surely, for every \(T<\infty\),
\begin{equation}\label{prob:limitclass}
 u\in D([0,T];H^1),\qquad v=u-\Psi\in X^1([0,T]).
\end{equation}
They satisfy the mild equation \eqref{eq:mild}, and
\begin{equation}\label{prob:allcutoffs}
 \sup_{0\leq t\leq T}\|u^\varepsilon(t)-u(t)\|_{H^1}
    +\|v^\varepsilon-v\|_{X^1([0,T])}
 \longrightarrow0
 \quad\hbox{in probability as }\varepsilon\downarrow0.
\end{equation}
Moreover,
\begin{equation}\label{prob:limitenergy}
 \mathbb E\sup_{0\leq t\leq T}F(u(t))^2\le C_TF(u_0)^2,
\end{equation}
with the constant of Proposition~\ref{prob:energy}.
\end{proposition}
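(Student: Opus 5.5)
The plan is to extract an almost surely convergent subsequence from the Cauchy-in-probability estimate of Proposition~\ref{prob:cauchy}, identify its limit as a solution, and then upgrade to convergence of the whole family. Fix an integer horizon $T$ and work inside the separable Banach space $\mathcal X_T$ of Lemma~\ref{prob:measurable}, together with $D([0,T];H^1)$ under the uniform distance. By \eqref{prob:cauchystatement} we can choose a decreasing sequence $\varepsilon_k\downarrow0$ with
\[
 \mathbb P\bigl(d_T(\varepsilon_k,\varepsilon_{k+1})>2^{-k}\bigr)\le2^{-k}.
\]
Borel--Cantelli then makes $(u^{\varepsilon_k})$ almost surely Cauchy in the uniform $H^1$ distance, and makes $(v^{\varepsilon_k})$ almost surely Cauchy in $X^1([0,T])$. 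A diagonal choice over $T\in\mathbb N$ gives one subsequence working for all horizons, and the definitions are consistent as $T$ increases because restriction does not increase the $X^1$ norm. Completeness yields limits $u\in D([0,T];H^1)$, since uniform limits of càdlàg paths are càdlàg, and $v\in X^1([0,T])$; on the exceptional null set we set $u=v=0$. Adaptedness follows because each $u^{\varepsilon_k}(t)$ is $\mathcal F_t$-measurable, the limits are pointwise almost sure limits, and the filtration is complete. Along the subsequence, Lemma~\ref{prob:convolution} gives $\Psi^{\varepsilon_k}\to\Psi$ uniformly in $H^3$ almost surely, after possibly thinning the subsequence by Borel--Cantelli using \eqref{prob:convolutiontail}. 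Hence $v=u-\Psi$, because $v^{\varepsilon_k}=u^{\varepsilon_k}-\Psi^{\varepsilon_k}$ and $X^1$ convergence implies uniform $H^1$ convergence.

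Next we pass to the limit in the mild equation \eqref{prob:cutoffmild}. The linear terms converge trivially. For the nonlinear term we use \eqref{det:Hminusone}: the paths $u^{\varepsilon_k}$ are uniformly bounded in $H^1$ on $[0,T]$ along the convergent sequence, so $\mathcal N(u^{\varepsilon_k})\to\mathcal N(u)$ in $L^1([0,T];H^{-1})$. Unitarity of $S$ on $H^{-1}$ then gives uniform convergence of the Duhamel integrals in $H^{-1}$. This yields \eqref{eq:mild} in $H^{-1}$. Equivalently, $v$ solves \eqref{det:equation} with forcing $f=\Psi$, and Lemma~\ref{det:measurable} supplies the continuous $H^1$ representative.

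To obtain \eqref{prob:allcutoffs} for the full family, we use the triangle inequality
\[
 d_T(\varepsilon,\text{limit})\le d_T(\varepsilon,\varepsilon_k)+d_T(\varepsilon_k,\text{limit}).
\]
The first term is small in probability by \eqref{prob:cauchyquantitative} once $\varepsilon,\varepsilon_k$ are small. The second tends to $0$ almost surely. Measurability of $\|v^\varepsilon-v\|_{X^1}$ holds because $v$ is an almost sure limit in the closed subspace $\mathcal X_T$.

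Finally, the energy bound \eqref{prob:limitenergy} follows from Fatou's lemma. Uniform $H^1$ convergence implies $\sup_tF(u^{\varepsilon_k}(t))\to\sup_tF(u(t))$, since $F$ is continuous on $H^1$ by \eqref{prob:Fcoercivity} and the $L^6$ embedding. Combining this with the uniform bound \eqref{prob:maxF} gives the estimate with the same constant $C_T$.

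I expect the main obstacle to be bookkeeping rather than analysis. One must ensure the subsequence limit is independent of the horizon and adapted. One must also justify measurability of the limit in $X^1$, where the separability supplied by Lemma~\ref{prob:measurable} is essential.
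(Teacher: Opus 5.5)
Your proposal is correct and follows essentially the same route as the paper: a fast subsequence chosen from the Cauchy estimate, Borel--Cantelli and completeness in $D([0,T];H^1)$ and $X^1([0,T])$, passage to the limit in the mild equation in $H^{-1}$ via the Lipschitz bound for $\mathcal N$, and Fatou's lemma for the energy. The differences are cosmetic. The paper avoids your diagonalization by building the growing horizon $T=n$ and the convolution tail directly into the choice of $\varepsilon_n$. It obtains convergence of the whole family by Fatou applied to the quantitative bound \eqref{prob:cauchyquantitative} rather than by your triangle-inequality argument. It reads off the $H^1$ representative of the Duhamel term directly from $v-S(\cdot)u_0\in X^1$ instead of invoking Lemma~\ref{det:measurable}.
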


\begin{proof}
By Proposition~\ref{prob:cauchy} and
\eqref{prob:convolutiontail}, one can choose a deterministic decreasing
sequence \(\varepsilon_n\downarrow0\) such that
\begin{align}
 \mathbb P\bigl(d_n(\varepsilon_n,\varepsilon_{n+1})>2^{-n}\bigr)
 &\le2^{-n},\label{prob:subsequencechoice1}\\
 \mathbb P\left(
   \sup_{0\leq t\leq n}\|\Psi^{\varepsilon_n}(t)-\Psi(t)\|_{H^3}>2^{-n}
          \right)
 &\le2^{-n}.\label{prob:subsequencechoice2}
\end{align}
Here \(d_n\) means \eqref{prob:distance} with the time horizon \(T=n\).
To specify the choice, for each \(n\geq1\) choose \(\delta_n>0\)
such that
\[
 \mathbb P\bigl(d_n(\varepsilon,\eta)>2^{-n}\bigr)\leq2^{-n}
 \qquad(0<\varepsilon,\eta<\delta_n).
\]
Choose also \(0<\gamma_n<1\) with
\[
 4n\,2^{2n}q_3(\gamma_n)\leq2^{-n}.
\]
Starting from \(\varepsilon_0=1\), select recursively
\[
 0<\varepsilon_n<
 \min\{\delta_n,\gamma_n,2^{-n},\varepsilon_{n-1}/2\}.
\]
Both \(\varepsilon_n\) and \(\varepsilon_{n+1}\) are below
\(\delta_n\), which gives \eqref{prob:subsequencechoice1}.
The convolution estimate \eqref{prob:convolutiontail} and Chebyshev's
inequality give \eqref{prob:subsequencechoice2}.

By Borel--Cantelli, outside one null set the two events in
\eqref{prob:subsequencechoice1}--\eqref{prob:subsequencechoice2}
occur for only finitely many \(n\).
Fix an integer \(k\). For \(n\geq k\), the definitions of the
uniform distance and the \(X^1\) restriction norm give
\[
 \begin{split}
 &\sup_{0\leq t\leq k}\|u^{\varepsilon_{n+1}}(t)
                         -u^{\varepsilon_n}(t)\|_{H^1}
 +\|v^{\varepsilon_{n+1}}-v^{\varepsilon_n}\|_{X^1([0,k])}\\
 &\qquad\leq d_n(\varepsilon_n,\varepsilon_{n+1})
 \leq2^{-n}
 \end{split}
\]
for all sufficiently large \(n\) on this event. Therefore
\[
 \sum_{n\ge k}
 \left[
 \sup_{0\leq t\leq k}\|u^{\varepsilon_{n+1}}(t)
                         -u^{\varepsilon_n}(t)\|_{H^1}
 +\|v^{\varepsilon_{n+1}}-v^{\varepsilon_n}\|_{X^1([0,k])}
 \right]<\infty
\]
on that event.

The space of \(H^1\)-valued c\`adl\`ag functions on \([0,k]\) is
complete for the uniform distance: a uniform limit has right continuity
and left limits by completeness of \(H^1\).
The continuous remainder space \(X^1([0,k])\) is also complete.
We therefore obtain pathwise limits
\begin{align}
 u^{\varepsilon_n}&\longrightarrow u^{(k)}
     &&\text{uniformly in \(H^1\) on \([0,k]\)},\label{prob:asuniform}\\
 v^{\varepsilon_n}&\longrightarrow v^{(k)}
     &&\text{in \(X^1([0,k])\)}.\label{prob:ascritical}
\end{align}
On overlapping intervals the uniform limits agree, and the critical
limits agree as well because \(X^1\) embeds into \(C H^1\).
Thus they define processes \(u\) and \(v\) on the whole half-line.
The convolution convergence in \eqref{prob:subsequencechoice2} gives
\[
 u-v=\Psi
\]
simultaneously on all these intervals, proving \eqref{prob:limitclass}.
In particular, the limiting \(u\) has the required c\`adl\`ag paths and
the remainder \(v\) is continuous.

At every fixed time \(t\), \(u^{\varepsilon_n}(t)\) is
\(\mathcal F_t\)-measurable and converges almost surely in \(H^1\).
The limit is therefore adapted. Completeness of the filtration permits
the common exceptional null set to be removed consistently; on it one
may choose a fixed deterministic path with initial value \(u_0\).
An adapted right-continuous process with values in the separable space
\(H^1\) is progressively measurable.
Since \(v=u-\Psi\), the remainder is adapted; its continuity
also makes it progressively measurable.
For each fixed \(k\), Lemma~\ref{prob:measurable} places all
\(v^{\varepsilon_n}\) in the closed separable subspace \(\mathcal X_k\).
The critical limit is an \(\mathcal X_k\)-valued measurable random
variable. This also verifies measurability of the critical distances
to the limit.

It remains to identify the equation. For \(a,b\in H^1(\mathbb T^3)\),
the pointwise polynomial bound and Sobolev duality imply
\begin{equation}\label{prob:nonlinearHminus1}
 \begin{split}
 \|\mathcal N(a)-\mathcal N(b)\|_{H^{-1}}
 &\lesssim\|\mathcal N(a)-\mathcal N(b)\|_{L^{6/5}}\\
 &\lesssim\bigl(\|a\|_{L^6}^4+\|b\|_{L^6}^4\bigr)\|a-b\|_{L^6}\\
 &\lesssim\bigl(\|a\|_{H^1}^4+\|b\|_{H^1}^4\bigr)\|a-b\|_{H^1}.
 \end{split}
\end{equation}
On every path for which \eqref{prob:asuniform} holds, the sequence
\(u^{\varepsilon_n}\) is uniformly bounded in \(L^\infty([0,k];H^1)\).
Writing
\[
 B_k=\sup_n\sup_{0\leq t\leq k}
           \|u^{\varepsilon_n}(t)\|_{H^1}
          +\sup_{0\leq t\leq k}\|u(t)\|_{H^1}<\infty,
\]
the bound \eqref{prob:nonlinearHminus1} yields
\[
 \begin{split}
 &\int_0^k
   \|\mathcal N(u^{\varepsilon_n}(s))-\mathcal N(u(s))\|_{H^{-1}}
       \,\mathrm ds\\
 &\qquad\lesssim kB_k^4
       \sup_{0\leq s\leq k}
           \|u^{\varepsilon_n}(s)-u(s)\|_{H^1}
       \longrightarrow0.
 \end{split}
\]
Unitarity of \(S(t)\) on \(H^{-1}\) then gives
\[
 \begin{split}
 &\sup_{0\leq t\leq k}
 \left\|\int_0^tS(t-s)
   [\mathcal N(u^{\varepsilon_n}(s))-\mathcal N(u(s))]
              \,\mathrm ds\right\|_{H^{-1}}\\
 &\qquad\leq\int_0^k
   \|\mathcal N(u^{\varepsilon_n}(s))-\mathcal N(u(s))\|_{H^{-1}}
              \,\mathrm ds
 \longrightarrow0.
 \end{split}
\]
Passing to the limit in \eqref{prob:cutoffmild} yields
\[
 u(t)=S(t)u_0
       -i\int_0^tS(t-s)\mathcal N(u(s))\,\mathrm ds+\Psi(t)
\]
simultaneously for all \(t\in[0,k]\) as an identity in \(H^{-1}\).
Subtracting \(\Psi(t)\) gives
\[
 -i\int_0^tS(t-s)\mathcal N(u(s))\,\mathrm ds
       =v(t)-S(t)u_0.
\]
The right-hand side belongs to \(X^1([0,k])\) by
\eqref{prob:ascritical} and the free evolution estimate
\eqref{det:linear}. Thus the integral has the continuous \(H^1\)
representative required in Definition~\ref{def:solution}.

To prove convergence of the entire cutoff family, put
\[
 d_T(\varepsilon)=
 \sup_{0\leq t\leq T}\|u^\varepsilon(t)-u(t)\|_{H^1}
 +\|v^\varepsilon-v\|_{X^1([0,T])}.
\]
For fixed \(\varepsilon\),
\(d_T(\varepsilon,\varepsilon_n)\to d_T(\varepsilon)\) almost surely.
For \(\rho>0\), this convergence implies
\[
 \mathbf1_{\{d_T(\varepsilon)>\rho\}}
 \leq\liminf_{n\to\infty}
       \mathbf1_{\{d_T(\varepsilon,\varepsilon_n)>\rho/2\}}
 \quad\hbox{almost surely}.
\]
Fatou's lemma and \eqref{prob:cauchyquantitative} therefore give
\begin{equation}\label{prob:limitprobabilitybound}
 \begin{split}
 \mathbb P(d_T(\varepsilon)>\rho)
 &\leq\liminf_{n\to\infty}
       \mathbb P(d_T(\varepsilon,\varepsilon_n)>\rho/2)\\
 &\leq\frac{C_{T,u_0,G,\nu}}{R^2}
        +\frac{16TK_T(R)^2}{\rho^2}q_3(\varepsilon),
 \end{split}
\end{equation}
where \(\varepsilon_n<\varepsilon\) for all sufficiently large \(n\).
For every fixed \(R\), \(q_3(\varepsilon)\to0\) gives
\[
 \limsup_{\varepsilon\downarrow0}
          \mathbb P(d_T(\varepsilon)>\rho)
 \leq C_{T,u_0,G,\nu}R^{-2}.
\]
Letting \(R\to\infty\) proves \eqref{prob:allcutoffs}.

Finally, \(F\) is locally Lipschitz on \(H^1\). More precisely, its
polynomial expression and Sobolev embedding give
\[
 |F(a)-F(b)|
 \lesssim\bigl(1+\|a\|_{H^1}^5+\|b\|_{H^1}^5\bigr)\|a-b\|_{H^1}.
\]
Thus \eqref{prob:asuniform} implies uniform convergence of
\(F(u^{\varepsilon_n})\) to \(F(u)\) on \([0,T]\), pathwise.
Consequently, Fatou's lemma and Proposition~\ref{prob:energy}
give
\[
 \begin{split}
 \mathbb E\sup_{0\leq t\leq T}F(u(t))^2
 &\leq\liminf_{n\to\infty}
       \mathbb E\sup_{0\leq t\leq T}F(u^{\varepsilon_n}(t))^2\\
 &\leq C_TF(u_0)^2,
 \end{split}
\]
which is \eqref{prob:limitenergy}. By
\(\|w\|_{H^1}^4\leq4F(w)^2\),
\begin{equation}\label{prob:fourthH1moment}
 \mathbb E\sup_{0\leq t\leq T}\|u(t)\|_{H^1}^4
 \le4C_TF(u_0)^2.
\end{equation}
This completes the proof.
\end{proof}

\subsection{Pathwise uniqueness and continuous dependence}

The construction also identifies the jumps. The Hilbert-space Poisson
integral in the interaction representation satisfies
\(\Delta_tY=S(-t)G\,\Delta_tL\). Since \(S(t)\) is strongly continuous,
\(\Delta_t\Psi=G\,\Delta_tL\). The continuity of \(v\) therefore gives,
outside a single null set,
\begin{equation}\label{prob:limitingjumps}
 \Delta_tu=G\,\Delta_tL,\qquad t>0.
\end{equation}
\begin{proposition}\label{prob:uniqueness}
Two solutions in Definition~\ref{def:solution} with the same initial
datum and the same Poisson random measure on the prescribed stochastic
basis are indistinguishable.
\end{proposition}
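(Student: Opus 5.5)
The plan is to reduce pathwise uniqueness to the deterministic uniqueness statement, Corollary~\ref{det:uniqueness}, applied path by path with the stochastic convolution $\Psi$ as the prescribed forcing.

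Let $u$ and $\widetilde u$ be two solutions in the sense of Definition~\ref{def:solution}, with the same datum $u_0$ and the same random measure $N$. Set $v=u-\Psi$ and $\widetilde v=\widetilde u-\Psi$. The key observation is that $\Psi$ is the same process for both solutions. It is defined by \eqref{eq:convolution} from $N$ alone, and by Lemma~\ref{prob:convolution} it has a single adapted càdlàg $H^3$-valued version.

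Fix an integer $T$. We work outside the union of the exceptional null sets on which any of the following fails: $v\in X^1([0,T])$, $\widetilde v\in X^1([0,T])$, the mild identity \eqref{eq:mild} for $u$ and for $\widetilde u$, or path regularity of $\Psi$. There are countably many such $T$, so this union is still a null set.

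On the complement, subtracting $\Psi(t)$ from \eqref{eq:mild} gives, for every $t\in[0,T]$,
\[
 v(t)=S(t)u_0-i\int_0^tS(t-s)\mathcal N(v(s)+\Psi(s))\,ds .
\]
This is \eqref{det:equation} with $a=0$, $v_a=u_0$ and $f=\Psi$, and the same holds for $\widetilde v$. Three points must be checked before Corollary~\ref{det:uniqueness} applies.

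\begin{itemize}
\item \emph{The forcing lies in $Z([0,T])$.} A càdlàg path on a compact interval is bounded, so $\Psi\in L^\infty([0,T];H^3)$. Lemma~\ref{det:H3toZ} (equivalently \eqref{prob:Zsup}) then places $\Psi$ in $Z([0,T])$.
\item \emph{The forcing is strongly measurable in time.} Càdlàg paths into the separable space $H^3$ are strongly measurable.
\item \emph{The two Duhamel integrals agree.} Definition~\ref{def:solution} uses the $H^{-1}$-valued integral, while the deterministic theory uses the $X^1$ representative. Lemma~\ref{det:measurable} identifies these two objects.
\end{itemize}

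With these in place, Corollary~\ref{det:uniqueness} with $J=[0,T]$ gives $v=\widetilde v$ on $[0,T]$. Hence $u=\widetilde u$ on $[0,T]$ for every path outside the null set, simultaneously for all $t$. Letting $T$ run through the integers yields indistinguishability on $[0,\infty)$.

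I expect no serious analytic obstacle, because the deterministic theory was built for exactly this forcing class. The step needing most care is the bookkeeping of null sets. One must make sure that the mild equations hold for all $t$ simultaneously, not merely for each fixed $t$ almost surely. If the definition were read only for fixed $t$, I would first upgrade it to a simultaneous statement via the continuity in $t$ of both sides in $H^{-1}$, since $v$ is continuous and $\Psi$ is càdlàg.

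A second point to confirm is that the $\Psi$ appearing in the solution class is the same version for both solutions. Any two càdlàg versions are indistinguishable, so this too costs only a null set.
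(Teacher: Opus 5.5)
Your proposal is correct and follows the paper's route: on a common full-probability event for each integer $T$, you reduce pathwise to the deterministic theory with the shared forcing $f=\Psi$ and zero initial discrepancy. The only difference is the cited result: the paper applies Theorem~\ref{det:stability} with vanishing data and forcing differences, taking $R,K$ from the boundedness of c\`adl\`ag paths, whereas you invoke Corollary~\ref{det:uniqueness} directly, which is the more economical reference since the stability theorem itself rests on that corollary.
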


\begin{proof}
Let \(u,\widetilde u\) be two such solutions and set
\(v=u-\Psi\), \(\widetilde v=\widetilde u-\Psi\).
Fix an integer \(T>0\), and take a common event of probability one
on which both remainders belong to \(X^1([0,T])\), both full solutions
are c\`adl\`ag in \(H^1\), and \(\Psi\) is c\`adl\`ag in \(H^3\).
Every c\`adl\`ag Banach-valued function on a compact interval is bounded.
Thus, for each path in this event, there are finite numbers \(R\geq1\)
and \(K\geq0\) with
\[
 \|v\|_{L^\infty([0,T];H^1)}
 +\|\widetilde v\|_{L^\infty([0,T];H^1)}\le R,
 \qquad
 \|\Psi\|_{L^\infty([0,T];H^3)}\le K.
\]
Theorem~\ref{det:stability} applied to this path gives
\[
 \begin{split}
 \|v-\widetilde v\|_{X^1([0,T])}
 &\leq C_T(R,K)
       \bigl(\|v(0)-\widetilde v(0)\|_{H^1}
                  +\|\Psi-\Psi\|_{Z([0,T])}\bigr)\\
 &=0.
 \end{split}
\]
The continuous embedding into \(C([0,T];H^1)\) implies equality at every
time, and hence \(u=\widetilde u\) on \([0,T]\).
Intersecting the probability-one events over integer \(T\) proves
indistinguishability on \([0,\infty)\).
This completes the proof.
\end{proof}

\begin{proposition}
\label{prob:continuity}
Let \(u_{0,n}\to u_0\) in \(H^1\), and construct \(u_n,u\) using the
same Poisson random measure and the coefficient \(G\).
Then, for every \(T<\infty\),
\begin{equation}\label{prob:initialcontinuity}
 \sup_{0\leq t\leq T}\|u_n(t)-u(t)\|_{H^1}
 +\|(u_n-\Psi)-(u-\Psi)\|_{X^1([0,T])}
 \longrightarrow0
 \quad\hbox{in probability}.
\end{equation}
\end{proposition}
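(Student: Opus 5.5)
The plan is a pathwise application of Theorem~\ref{det:stability} on good events, together with the uniform moment bounds. Both $u_n$ and $u$ are driven by the same noise, so their remainders $v_n=u_n-\Psi$ and $v=u-\Psi$ solve \eqref{det:equation} on $[0,T]$ with the \emph{same} forcing $f=g=\Psi$. The forcing distance in \eqref{det:stabilityH3} therefore vanishes, and on any path where the relevant norms are bounded we get
\[
 \|v_n-v\|_{X^1([0,T])}\le C_T(R,K)\,\|u_{0,n}-u_0\|_{H^1}.
\]
By Corollary~\ref{det:totalfield}, since $u_n-u=v_n-v$, the same constant (enlarged) also controls $\sup_{0\le t\le T}\|u_n(t)-u(t)\|_{H^1}$. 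The only issue is that the bound $R$ must hold uniformly in $n$ with high probability.

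For this, fix $R\ge1$ and introduce the event $\mathcal A_{n,R}$ on which
\[
 \sup_{0\le t\le T}F(u_n(t))\le R,\qquad
 \sup_{0\le t\le T}F(u(t))\le R,\qquad
 \sup_{0\le t\le T}\|\Psi(t)\|_{H^3}\le R.
\]
Since $u_{0,n}\to u_0$ in $H^1$, the quantity $\sup_n F(u_{0,n})$ is finite by \eqref{prob:Fcoercivity}. Proposition~\ref{prob:limit}, applied to each datum (its constant $C_T$ does not depend on the datum), gives
\[
 \mathbb E\sup_{0\le t\le T}F(u_n(t))^2\le C_TF(u_{0,n})^2 .
\]
Combined with \eqref{prob:convolutionmax} and Markov's inequality, this yields
\[
 \mathbb P(\mathcal A_{n,R}^c)\le \frac{C_T\bigl(\sup_kF(u_{0,k})^2+F(u_0)^2\bigr)+4Tq_3}{R^2},
\]
uniformly in $n$. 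On $\mathcal A_{n,R}$, the bounds $\|v_n\|_{H^1},\|v\|_{H^1}\le\sqrt{2R}+R=:R_*$ hold on $[0,T]$, as in the proof of Proposition~\ref{prob:cauchy}. Theorem~\ref{det:stability} with parameters $(R_*,R)$ then gives the deterministic bound
\[
 \sup_{0\le t\le T}\|u_n(t)-u(t)\|_{H^1}+\|v_n-v\|_{X^1([0,T])}\le K_T(R)\|u_{0,n}-u_0\|_{H^1}.
\]

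To conclude, for $\rho>0$ we have
\[
 \mathbb P(\text{LHS}>\rho)\le \mathbb P(\mathcal A_{n,R}^c)+\mathbf 1_{\{K_T(R)\|u_{0,n}-u_0\|_{H^1}>\rho\}} .
\]
The indicator term vanishes for $n$ large once $R$ is fixed; letting $R\to\infty$ afterwards gives convergence in probability.

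The measurability of the $X^1$ distance must also be checked. I would handle it as in Proposition~\ref{prob:limit}: each $v_n$ and $v$ is an almost sure limit of cutoff remainders, and these lie in separable subspaces $\mathcal X_T$ built by Lemma~\ref{prob:measurable} (one for each datum). The closed span of countably many separable subspaces is separable, so the difference is a measurable random variable.

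The main obstacle is making sure the good event controls the whole-interval remainder norms so that Theorem~\ref{det:stability} applies with constants independent of $n$. That theorem needs only sup-in-time $H^1$ bounds on the remainders and the $L^\infty H^3$ bound on the forcing, and both are controlled by $F$ and $\Psi$ on $\mathcal A_{n,R}$. So the essential point is the uniformity of the moment constant in the datum, which Proposition~\ref{prob:energy} provides since the constant there is independent of $u_0$ apart from the factor $F(u_0)^2$.
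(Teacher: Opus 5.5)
Your proposal is correct and follows the paper's proof essentially step for step. It uses the same good event bounding $\sup_{t}F(u_n(t))$, $\sup_{t}F(u(t))$ and $\sup_{t}\|\Psi(t)\|_{H^3}$ by $R$, Markov's inequality with the datum-uniform moment constant, and deterministic stability with identical forcing so that only $\|u_{0,n}-u_0\|_{H^1}$ survives; it then lets $n\to\infty$ before $R\to\infty$ and obtains measurability from the closed span of the countably many separable subspaces of Lemma~\ref{prob:measurable}.
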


\begin{proof}
The function \(F\) is continuous on \(H^1\), so
\(\sup_nF(u_{0,n})<\infty\).
The energy estimate \eqref{prob:limitenergy} is therefore uniform
in \(n\), with the same coefficient-dependent constant.
Let \(\mathcal B_R^n\) be the event where
\[
 \sup_{0\leq t\leq T}F(u_n(t))\le R,\qquad
 \sup_{0\leq t\leq T}F(u(t))\le R,\qquad
 \sup_{0\leq t\leq T}\|\Psi(t)\|_{H^3}\le R.
\]
The union bound and Markov's inequality yield
\[
 \begin{split}
 \mathbb P((\mathcal B_R^n)^c)
 &\leq R^{-2}
 \left(\mathbb E\sup_{0\leq t\leq T}F(u_n(t))^2
       +\mathbb E\sup_{0\leq t\leq T}F(u(t))^2\right)\\
 &\quad+R^{-2}
       \mathbb E\sup_{0\leq t\leq T}\|\Psi(t)\|_{H^3}^2\\
 &\leq R^{-2}\bigl(C_TF(u_{0,n})^2+C_TF(u_0)^2+4Tq_3\bigr).
 \end{split}
\]
Thus
\begin{equation}\label{prob:databadset}
 \sup_n\mathbb P((\mathcal B_R^n)^c)
 \leq \frac{C_{T,G,\nu,\{u_{0,n}\},u_0}}{R^2}.
\end{equation}
On \(\mathcal B_R^n\), both remainders have \(H^1\) bounds controlled
only by \(R\). Since the forcing is the same, deterministic stability
gives
\[
 \|(u_n-\Psi)-(u-\Psi)\|_{X^1([0,T])}
 \le C_T(R)\|u_{0,n}-u_0\|_{H^1}.
\]
Denote the left-hand side of \eqref{prob:initialcontinuity} by
\(D_{T,n}\). Since \(u_n-u=(u_n-\Psi)-(u-\Psi)\), the embedding
\eqref{det:linear} gives
\[
 D_{T,n}\leq2C_T(R)\|u_{0,n}-u_0\|_{H^1}
 \quad\hbox{on }\mathcal B_R^n.
\]
It follows that, for every \(\rho>0\),
\[
 \begin{split}
 \mathbb P(D_{T,n}>\rho)
 &\leq\mathbb P((\mathcal B_R^n)^c)
    +\mathbf1_{\{2C_T(R)\|u_{0,n}-u_0\|_{H^1}>\rho\}},\\
 \limsup_{n\to\infty}\mathbb P(D_{T,n}>\rho)
 &\leq C_{T,G,\nu,\{u_{0,n}\},u_0}R^{-2}.
 \end{split}
\]
Letting \(R\to\infty\) proves \eqref{prob:initialcontinuity}.

For measurability, apply Lemma~\ref{prob:measurable} separately
to \(u_0\) and each \(u_{0,n}\). The closed linear span of the
resulting countable collection of separable subspaces is separable.
All limiting remainders are measurable random elements of that
space, by their construction as almost-sure \(X^1\) limits.
Their \(X^1\) distances are therefore measurable, as are the
uniform distances between the c\`adl\`ag solutions.
This completes the proof.
\end{proof}

\begin{proof}[Proof of Theorem~\ref{thm:main}]
Proposition~\ref{prob:cutoff} constructs the finite-activity solutions.
Propositions~\ref{prob:cauchy} and \ref{prob:limit} construct their
global limit on the given stochastic basis and identify the mild
equation. The class assertion and cutoff convergence follow from
\eqref{prob:limitclass} and \eqref{prob:allcutoffs}.
Propositions~\ref{prob:energy} and \ref{prob:limit} give
\[
 \sup_{0<\varepsilon<1}\mathbb E\sup_{0\leq t\leq T}
               F(u^\varepsilon(t))^2
 +\mathbb E\sup_{0\leq t\leq T}F(u(t))^2
 \leq2C_TF(u_0)^2,
\]
which is \eqref{eq:main-moment}.
Proposition~\ref{prob:uniqueness} proves pathwise uniqueness in the
stated class, and Proposition~\ref{prob:continuity} proves continuous
dependence.
This completes the proof.
\end{proof}

\section{Further stability consequences}\label{sec:consequences}

The moment bound \eqref{eq:main-moment} upgrades the convergence
in \eqref{eq:main-cutoff} and \eqref{eq:main-data} to convergence
in mean in the uniform energy path norm.

\subsection{Convergence in mean in the energy path norm}

\begin{corollary}\label{cor:Lq}
For every $T<\infty$ and $0<q<4$,
\begin{equation}\label{eq:Lq-cutoff}
 \lim_{\varepsilon\downarrow0}
 \mathbb E\left[\sup_{0\leq t\leq T}
        \|u^\varepsilon(t)-u(t)\|_{H^1}^{q}\right]=0.
\end{equation}
If $u_{0,n}\to u_0$ in $H^1$ and the solutions use the same
noise, then
\begin{equation}\label{eq:Lq-data}
 \lim_{n\to\infty}
 \mathbb E\left[\sup_{0\leq t\leq T}
        \|u_n(t)-u(t)\|_{H^1}^{q}\right]=0.
\end{equation}
\end{corollary}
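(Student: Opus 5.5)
The plan is to upgrade convergence in probability to convergence in $L^q$ by uniform integrability, via Vitali's theorem. Set
\[
 X_\varepsilon=\sup_{0\leq t\leq T}\|u^\varepsilon(t)-u(t)\|_{H^1},
 \qquad
 X_n=\sup_{0\leq t\leq T}\|u_n(t)-u(t)\|_{H^1}.
\]
These are random variables, because the uniform distance between c\`adl\`ag $H^1$ paths is measurable (the supremum may be taken over a countable dense set together with $T$). By \eqref{eq:main-cutoff} and \eqref{eq:main-data}, $X_\varepsilon\to0$ and $X_n\to0$ in probability. It therefore suffices to show that the families $\{X_\varepsilon^q\}$ and $\{X_n^q\}$ are bounded in $L^{4/q}(\Omega)$. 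Since $4/q>1$, such a bound gives uniform integrability.

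First, the fourth moment bound. By \eqref{prob:Fcoercivity}, $\|w\|_{H^1}^4\leq4F(w)^2$. Hence
\[
 X_\varepsilon^4\leq8\Bigl(\sup_{t\le T}\|u^\varepsilon(t)\|_{H^1}^4+\sup_{t\le T}\|u(t)\|_{H^1}^4\Bigr)
 \leq32\Bigl(\sup_{t\le T}F(u^\varepsilon(t))^2+\sup_{t\le T}F(u(t))^2\Bigr).
\]
By \eqref{eq:main-moment}, the expectation of the right-hand side is bounded by $C_TF(u_0)^2$, uniformly in $0<\varepsilon<1$.

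The data case is identical. Here we use \eqref{prob:limitenergy} for $u_n$ and $u$, with the constant $C_T$ of Proposition~\ref{prob:energy}, which does not depend on the datum. Since $u_{0,n}\to u_0$ in $H^1$, continuity of $F$ on $H^1$ gives $\sup_nF(u_{0,n})<\infty$. So $\sup_n\mathbb E X_n^4<\infty$.

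Next, the truncation. Fix $q<4$ and $M>0$, and split on the event $\{X\le M\}$ for $X=X_\varepsilon$ or $X=X_n$. On this event, bounded convergence applies. On its complement, H\"older with exponents $4/q$ and $4/(4-q)$ gives
\[
 \mathbb E[X^q\mathbf1_{\{X>M\}}]\leq(\mathbb EX^4)^{q/4}\,\mathbb P(X>M)^{1-q/4}\leq(\mathbb EX^4)M^{q-4}.
\]
Therefore
\[
 \limsup\mathbb EX^q\leq \limsup\mathbb E[X^q\wedge M^q]+CM^{q-4}=CM^{q-4}.
\]
The first term vanishes by convergence in probability, since $x\mapsto x^q\wedge M^q$ is bounded and continuous. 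Letting $M\to\infty$ gives \eqref{eq:Lq-cutoff} and \eqref{eq:Lq-data}.

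There is no serious obstacle. The only points to check are the measurability of the suprema and the fact that the moment constant in \eqref{prob:limitenergy} is uniform over the convergent data $u_{0,n}$, which holds because it depends on the datum only through $F(u_{0,n})^2$. The restriction $q<4$ is exactly what makes the strict exponent gain over $L^4$ available; $q=4$ would need a higher moment than \eqref{eq:main-moment} provides.
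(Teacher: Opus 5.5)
Your proposal is correct and essentially the paper's proof. Both obtain the uniform fourth-moment bound from (ii) via $\|w\|_{H^1}^4\le 4F(w)^2$ and then apply H\"older with exponents $4/q$ and $4/(4-q)$ on a tail event. The only difference is cosmetic: the paper splits at a small level $\delta$ and controls $\mathbb P(e_\varepsilon>\delta)$ by convergence in probability, whereas you split at a large level $M$, control the tail by Chebyshev, and handle the bounded part by convergence in probability.
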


\begin{proof}
Set $e_\varepsilon=\sup_{0\leq t\leq T}
\|u^\varepsilon(t)-u(t)\|_{H^1}$. From
$\|w\|_{H^1}^2\le2F(w)$ and
Theorem~\ref{thm:main}(ii),
\begin{equation}\label{eq:fourth-bound}
 \sup_{0<\varepsilon<1}\mathbb E e_\varepsilon^4
 \lesssim\sup_{0<\varepsilon<1}
       \mathbb E\sup_{0\leq t\leq T}
            \bigl(F(u^\varepsilon(t))^2+F(u(t))^2\bigr)
 <\infty.
\end{equation}
Write $B_T=\sup_{0<\varepsilon<1}\mathbb E e_\varepsilon^4<\infty$.
For $\delta>0$, split the expectation according to whether
$e_\varepsilon\leq\delta$. H\"older's inequality with exponents
$4/q$ and $4/(4-q)$ gives
\begin{align*}
 \mathbb E e_\varepsilon^q
 &\leq \delta^q+
  \mathbb E\bigl[e_\varepsilon^q
                    \mathbf1_{\{e_\varepsilon>\delta\}}\bigr]\\
 &\leq \delta^q+B_T^{q/4}
              \mathbb P(e_\varepsilon>\delta)^{1-q/4}.
\end{align*}
By \eqref{eq:main-cutoff}, the probability tends to zero for every
fixed $\delta>0$. Therefore
$\limsup_{\varepsilon\downarrow0}\mathbb E e_\varepsilon^q
\leq\delta^q$, and letting $\delta\downarrow0$ proves
\eqref{eq:Lq-cutoff}.
For the initial-data assertion, the sequence $u_{0,n}$ is
bounded in $H^1$, and the embedding $H^1\hookrightarrow L^6$
implies $\sup_nF(u_{0,n})<\infty$. The same fourth-moment
bound gives
\[
 \sup_n\mathbb E\left[
   \sup_{0\leq t\leq T}\|u_n(t)-u(t)\|_{H^1}^4\right]<\infty.
\]
Applying the preceding H\"older estimate with
$e_n=\sup_{0\leq t\leq T}\|u_n(t)-u(t)\|_{H^1}$ and using
\eqref{eq:main-data} proves \eqref{eq:Lq-data}.
This completes the proof.
\end{proof}

\subsection{A finite-time vanishing-noise limit}

For $0\le\lambda\le1$, let $u^\lambda$ solve the equation
with coefficient $\lambda G$ and initial datum $u_0$ on the
same stochastic basis. Its stochastic convolution is
$\lambda\Psi$. Let $w$ be the global deterministic solution
with datum $u_0$, so that $u^0=w$.

\begin{corollary}\label{cor:vanishing}
For each $T<\infty$,
\begin{equation}\label{eq:vanishing-prob}
 \sup_{0\leq t\leq T}\|u^\lambda(t)-w(t)\|_{H^1}
    +\|u^\lambda-\lambda\Psi-w\|_{X^1([0,T])}
           \longrightarrow0
 \quad\hbox{in probability as }\lambda\downarrow0.
\end{equation}
For each $0<q<4$, one also has
\begin{equation}\label{eq:vanishing-Lq}
 \lim_{\lambda\downarrow0}
 \mathbb E\left[\sup_{0\leq t\leq T}
              \|u^\lambda(t)-w(t)\|_{H^1}^q\right]=0.
\end{equation}
\end{corollary}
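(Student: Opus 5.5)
The proof mirrors the initial-data continuity argument of Proposition~\ref{prob:continuity}, with the noise amplitude $\lambda$ as the small parameter. Put $v^\lambda=u^\lambda-\lambda\Psi$. Then $v^\lambda$ solves \eqref{det:equation} on $[0,T]$ with $a=0$, datum $u_0$ and forcing $f=\lambda\Psi$. The deterministic solution $w$ solves the same equation with forcing $g=0$. Both remainders therefore share the initial value $u_0$, and the forcing distance is
\[
 \|\lambda\Psi-0\|_{L^\infty([0,T];H^3)}=\lambda\sup_{0\le t\le T}\|\Psi(t)\|_{H^3}.
\]

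The first step is uniform localization. Proposition~\ref{prob:energy} states that its constant is uniform for coefficients $\lambda G$ with $0\le\lambda\le1$. Passing to the limit as in Proposition~\ref{prob:limit} therefore gives
\[
 \sup_{0\le\lambda\le1}\mathbb E\sup_{0\le t\le T}F(u^\lambda(t))^2\le C_TF(u_0)^2.
\]
Let $\mathcal B_R^\lambda$ be the event where $\sup_tF(u^\lambda(t))\le R$ and $\sup_t\|\Psi(t)\|_{H^3}\le R$. Markov's inequality and Lemma~\ref{prob:convolution} give
\[
 \sup_\lambda\mathbb P\bigl((\mathcal B_R^\lambda)^c\bigr)\le\frac{C_TF(u_0)^2+4Tq_3}{R^2}.
\]
On $\mathcal B_R^\lambda$ the following bounds hold:
\begin{itemize}
\item $\sup_t\|v^\lambda(t)\|_{H^1}\le\sqrt{2R}+R$;
\item $\|\lambda\Psi\|_{L^\infty H^3}\le R$;
\item $\sup_t\|w(t)\|_{H^1}\le(2F(u_0))^{1/2}$, by conservation.
\end{itemize}
Hence Corollary~\ref{det:totalfield}, with parameters depending only on $R$ and $u_0$, gives on $\mathcal B_R^\lambda$
\[
 \sup_{t}\|u^\lambda(t)-w(t)\|_{H^1}+\|v^\lambda-w\|_{X^1([0,T])}\le C_T(R)\,\lambda R.
\]
The pointwise-supremum version applies because $\Psi$ is c\`adl\`ag.

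It follows that, for every $\rho>0$,
\[
 \mathbb P(D_\lambda>\rho)\le\frac{C}{R^2}+\mathbf 1_{\{C_T(R)\lambda R>\rho\}},
\]
where $D_\lambda$ denotes the left side of \eqref{eq:vanishing-prob}. Let $\lambda\downarrow0$ with $R$ fixed, and then let $R\to\infty$; this proves \eqref{eq:vanishing-prob}.

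For \eqref{eq:vanishing-Lq}, set $e_\lambda=\sup_t\|u^\lambda(t)-w(t)\|_{H^1}$. The uniform moment bound together with $\|w\|_{H^1}^2\le2F$ gives $\sup_\lambda\mathbb E e_\lambda^4<\infty$. The Hölder splitting from the proof of Corollary~\ref{cor:Lq} then converts convergence in probability into convergence of $\mathbb E e_\lambda^q$ for $q<4$.

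The only delicate point is the uniformity in $\lambda$ of the energy moment for the limiting solution rather than only for the cutoffs. The Taylor constants $C_G$ and $A_G$ of Lemma~\ref{prob:taylor} are monotone in the operator norms of $G$, so they can be fixed for $G$ and reused for every $\lambda G$. The Fatou argument of Proposition~\ref{prob:limit} then transfers the bound for each $\lambda$ with the same constant. Measurability of $D_\lambda$ follows as in Proposition~\ref{prob:continuity}.
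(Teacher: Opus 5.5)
Your proposal is correct and follows essentially the same route as the paper: uniform-in-$\lambda$ energy moments (via the $\lambda G$ uniformity of Proposition~\ref{prob:energy} and the Fatou passage), localization on a bounded event, deterministic stability with zero initial difference and forcing distance $\lambda\sup_t\|\Psi(t)\|_{H^3}$, and the H\"older splitting for the $L^q$ limit. The only cosmetic difference is the localization. You localize on $\sup_t\|\Psi(t)\|_{H^3}\le R$ and bound the forcing distance deterministically by $\lambda R$. The paper instead localizes on $\sup_t\|\lambda\Psi(t)\|_{H^3}\le R$ and applies Chebyshev with the second moment $4T\lambda^2q_3$, which yields an explicit $\lambda^2$ error term in its probability bound.
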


\begin{proof}
For $0\leq\lambda\leq1$, set
\[
 h_\lambda(w,z)=F(w+\lambda Gz)-F(w),\qquad
 R_\lambda(w,z)=h_\lambda(w,z)-\lambda DF(w)[Gz].
\]
The Taylor formula used in Lemma~\ref{prob:taylor} gives
\[
 R_\lambda(w,z)
 =\lambda^2\int_0^1(1-s)
       D^2F(w+s\lambda Gz)[Gz,Gz]\,ds.
\]
Since $0\leq\lambda\leq1$ and $|z|\leq1$, its estimates yield
\[
 0\leq R_\lambda(w,z)\lesssim_G\lambda^2|z|^2F(w),\qquad
 |h_\lambda(w,z)|\lesssim_G\lambda|z|F(w).
\]
After integration in $z$, the drift is bounded by
$C_G\lambda^2m_2F(w)$ and the bracket density by
$C_G\lambda^2m_2F(w)^2$. The proof of
Proposition~\ref{prob:energy} thus applies with constants
independent of $\lambda$, giving
\begin{equation}\label{eq:lambda-moment}
 \sup_{0\le\lambda\le1}\mathbb E\sup_{0\leq t\leq T}
                              F(u^\lambda(t))^2
                  \le C_TF(u_0)^2.
\end{equation}
Moreover,
\begin{equation}\label{eq:lambda-psi}
 \mathbb E\sup_{0\leq t\leq T}\|\lambda\Psi(t)\|_{H^3}^2
                               \le4T\lambda^2q_3.
\end{equation}

Fix $R$ large enough that $F(w(t))=F(u_0)\le R$ and set
\[
 A_R^\lambda=
 \left\{\sup_{0\leq t\leq T}F(u^\lambda(t))\le R,
        \ \sup_{0\leq t\leq T}\|\lambda\Psi(t)\|_{H^3}\le R\right\}.
\]
Markov's inequality and
\eqref{eq:lambda-moment}--\eqref{eq:lambda-psi} give
\[
 \mathbb P((A_R^\lambda)^c)
 \leq\frac{C_TF(u_0)^2+4T\lambda^2q_3}{R^2}.
\]
Hence there is
$C_{\mathrm{err}}=C_{\mathrm{err}}(T,u_0,G,m_2)<\infty$ such that
$\mathbb P((A_R^\lambda)^c)\le C_{\mathrm{err}}/R^2$, uniformly in
$0\le\lambda\le1$ and $R\geq F(u_0)$. On $A_R^\lambda$, the remainders
$v^\lambda=u^\lambda-\lambda\Psi$ and $w$ have bounded
$L^\infty H^1$ norms, and their external paths are bounded
in $L^\infty H^3$. The deterministic stability estimate
of Section~\ref{sec:deterministic} gives
\[
 \|v^\lambda-w\|_{X^1([0,T])}
       \le C_T(R)\lambda
                      \sup_{0\leq t\leq T}\|\Psi(t)\|_{H^3}.
\]
The embedding into $C_tH^1_x$ and the triangle inequality
give the same bound, with an enlarged constant, for the sum
of distances in \eqref{eq:vanishing-prob}. Therefore, for
$a>0$,
\begin{equation}\label{eq:lambda-prob-bound}
 \mathbb P\left(
    \sup_{0\leq t\leq T}\|u^\lambda(t)-w(t)\|_{H^1}
       +\|v^\lambda-w\|_{X^1([0,T])}>a\right)
  \le\frac{C_{\mathrm{err}}}{R^2}
       +\frac{4TC_T(R)^2q_3}{a^2}\lambda^2.
\end{equation}
For fixed $R$, taking the upper limit as $\lambda\downarrow0$
in \eqref{eq:lambda-prob-bound} leaves at most
$C_{\mathrm{err}}/R^2$. Letting $R\to\infty$ proves
\eqref{eq:vanishing-prob}. Put
$e_\lambda=\sup_{0\leq t\leq T}\|u^\lambda(t)-w(t)\|_{H^1}$.
By \eqref{eq:lambda-moment}, mass and energy conservation for
$w$, and $\|a\|_{H^1}^2\leq2F(a)$,
\[
 B_T:=\sup_{0\leq\lambda\leq1}\mathbb E e_\lambda^4<\infty.
\]
For $0<q<4$ and $\delta>0$, H\"older's inequality now gives
\[
 \mathbb E e_\lambda^q
 \leq\delta^q+B_T^{q/4}
                 \mathbb P(e_\lambda>\delta)^{1-q/4}.
\]
Letting $\lambda\downarrow0$ and then $\delta\downarrow0$
proves \eqref{eq:vanishing-Lq}. This completes the proof.
\end{proof}

Corollary~\ref{dyn:nonuniform-zero-noise} gives the corresponding
long-time separation from the deterministic flow.

\section{Mass escape and absence of free scattering}
\label{sec:dynamics}

The mass balance yields an expected exit-time bound and
almost-sure escape from bounded mass sets. These estimates also
give lower bounds for the expected energy and obstructions to
free scattering.

\subsection{Pathwise mass balance and escape from bounded mass sets}

For $w\in L^2$ and $z\in Z_0$, set
\[
 b_0=\|G\|_{\mathcal L(\mathbb R^m,L^2(\mathbb T^3))},
 \qquad
 h_M(w,z)=2\operatorname{Re}\langle w,Gz\rangle_{L^2}
                          +\|Gz\|_{L^2}^2.
\]
Thus $\|Gz\|_2\le b_0$ for $z\in Z_0$. As before,
$q_0=\int_{Z_0}\|Gz\|_2^2\,\nu(dz)$.

\begin{proposition}
\label{dyn:massbalance}\label{prop:mean-mass}
The solution of Theorem~\ref{thm:main} satisfies, almost surely
simultaneously for all $t\ge0$,
\begin{equation}\label{eq:pathwise-mass}
 \|u(t)\|_2^2
  =\|u_0\|_2^2+q_0t+\mathcal M_t,
 \qquad
 \mathcal M_t=
 \int_{(0,t]}\int_{Z_0}
       h_M(u(s-),z)\,\widetilde N(ds,dz).
\end{equation}
The process $\mathcal M$ is a square-integrable martingale on
each finite time interval. In particular,
\begin{equation}\label{eq:mean-mass}
 \mathbb E\|u(t)\|_2^2=\|u_0\|_2^2+q_0t,
 \qquad t\ge0.
\end{equation}
\end{proposition}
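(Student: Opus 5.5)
We first prove the identity for the finite-activity solutions $u^\varepsilon$ and then pass to the limit using Proposition~\ref{prob:limit}. Between retained jumps the deterministic flow conserves mass. At a retained jump with mark $z$, \eqref{prob:cutoffjump} gives the exact increment
\[
 \|u^\varepsilon(s-)+Gz\|_2^2-\|u^\varepsilon(s-)\|_2^2=h_M(u^\varepsilon(s-),z).
\]
Hence, pathwise,
\[
 \|u^\varepsilon(t)\|_2^2=\|u_0\|_2^2+\int_0^t\int_{Z_\varepsilon}h_M(u^\varepsilon(s-),z)\,N(ds,dz).
\]
Compensating, the linear term $2\operatorname{Re}\langle w,Gz\rangle$ integrates to zero against $\nu$ on $Z_\varepsilon$ by the symmetry cancellation \eqref{prob:zerodrift}. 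The quadratic term gives $q_0(\varepsilon)t$, where $q_0(\varepsilon)=\int_{Z_\varepsilon}\|Gz\|_2^2\,\nu(dz)$. This yields
\[
 \|u^\varepsilon(t)\|_2^2=\|u_0\|_2^2+q_0(\varepsilon)t+\mathcal M^\varepsilon_t,
\]
with $\mathcal M^\varepsilon_t=\int_0^t\int_{Z_\varepsilon}h_M(u^\varepsilon(s-),z)\,\widetilde N(ds,dz)$. The compensation is legitimate once the integrand is square integrable. We have $|h_M(w,z)|\le 2\|w\|_2 b_0|z|+b_0^2|z|^2\lesssim (1+\|w\|_2)|z|$. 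Proposition~\ref{prob:energy} bounds $\mathbb E\sup_{t\le T}\|u^\varepsilon\|_2^4$ uniformly, so the bracket $\mathbb E\int_0^T\int|h_M|^2\,d\nu\,ds\lesssim m_2\int_0^T(1+\mathbb E\|u^\varepsilon\|_2^2)\,ds$ is finite. Therefore $\mathcal M^\varepsilon$ is a square-integrable martingale. One can also localize by $\tau_n$ as in that proof and remove the localization.

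Next we pass to the limit along the subsequence $\varepsilon_n$ of Proposition~\ref{prob:limit}, on which $u^{\varepsilon_n}\to u$ uniformly in $H^1$ on $[0,k]$ almost surely. The left side converges uniformly in $t$, and $q_0(\varepsilon_n)\to q_0$ by monotone convergence. For the martingales, define $\mathcal M_t$ by the stated formula using the predictable integrand $h_M(u(s-),z)$. Its bracket is finite by \eqref{prob:fourthH1moment}, so $\mathcal M$ is a square-integrable martingale on $[0,T]$. Doob's inequality and the isometry give
\[
 \mathbb E\sup_{t\le T}|\mathcal M^{\varepsilon_n}_t-\mathcal M_t|^2\le 4\,\mathbb E\int_0^T\!\!\int_{Z_0}\bigl|\mathbf 1_{Z_{\varepsilon_n}}(z)h_M(u^{\varepsilon_n}(s-),z)-h_M(u(s-),z)\bigr|^2\nu(dz)\,ds .
\]
We split this into two terms. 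The tail term over $\{|z|\le\varepsilon_n\}$ is bounded by $C\,\mathbb E\sup_t(1+\|u\|_2^2)\int_{|z|\le\varepsilon_n}|z|^2\,d\nu\to0$. The difference term is at most $4b_0^2m_2\,\mathbb E\int_0^T\|u^{\varepsilon_n}(s-)-u(s-)\|_2^2\,ds$. This term is the main obstacle, since we have only convergence in probability of the uniform distance, not in mean square. We handle it with Corollary~\ref{cor:Lq}, which gives mean convergence of $\sup_t\|u^\varepsilon-u\|_{H^1}^q$ for every $q<4$, in particular $q=2$. Alternatively, the uniform fourth-moment bound makes the squared distances uniformly integrable, and we conclude by Vitali.

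Hence $\sup_{t\le T}|\mathcal M^{\varepsilon_n}_t-\mathcal M_t|\to0$ in $L^2$. A further subsequence converges almost surely, uniformly on each $[0,k]$. Both sides of \eqref{eq:pathwise-mass} are càdlàg, so the identity holds simultaneously for all $t$ outside one null set. Intersecting over integer $k$ covers all $t\ge0$. Taking expectations and using $\mathbb E\mathcal M_t=0$ gives \eqref{eq:mean-mass}.
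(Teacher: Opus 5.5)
Your proposal is correct and follows the paper's proof essentially step by step. You use the finite-jump mass identity with the symmetric cancellation of the linear term, then the Doob--isometry estimate that splits the integrand difference into $2\operatorname{Re}\langle u^\varepsilon(s-)-u(s-),Gz\rangle$ on $Z_\varepsilon$ plus the small-mark tail, and you close with Corollary~\ref{cor:Lq} at $q=2$. The only cosmetic difference is in the left-hand side: you use almost-sure uniform convergence along the subsequence $\varepsilon_n$ of Proposition~\ref{prob:limit}, whereas the paper uses $L^1$ convergence of $\sup_t\bigl|\|u^\varepsilon(t)\|_2^2-\|u(t)\|_2^2\bigr|$ for the whole family via Cauchy--Schwarz.
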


\begin{proof}
For $w,g\in L^2$, expansion of the square gives
\[
 \|w+g\|_2^2-\|w\|_2^2
  =2\operatorname{Re}\langle w,g\rangle_{L^2}+\|g\|_2^2.
\]
Mass conservation between retained jumps therefore yields
\[
 \|u^\varepsilon(t)\|_2^2-\|u_0\|_2^2
  =\int_{(0,t]}\int_{Z_\varepsilon}
       h_M(u^\varepsilon(s-),z)\,N(ds,dz).
\]
The symmetry identity \eqref{prob:zerodrift} gives
\begin{align*}
 \int_{Z_\varepsilon}h_M(w,z)\,\nu(dz)
  &=2\operatorname{Re}\left\langle w,
       \int_{Z_\varepsilon}Gz\,\nu(dz)\right\rangle_{L^2}
       +\int_{Z_\varepsilon}\|Gz\|_2^2\,\nu(dz)\\
  &=\int_{Z_\varepsilon}\|Gz\|_2^2\,\nu(dz).
\end{align*}
Subtracting the compensator gives
\begin{equation}\label{eq:mass-finite}
 \begin{split}
 \|u^\varepsilon(t)\|_2^2
  ={}&\|u_0\|_2^2
       +t\int_{Z_\varepsilon}\|Gz\|_2^2\,\nu(dz)\\
     &+\int_{(0,t]}\int_{Z_\varepsilon}
          h_M(u^\varepsilon(s-),z)\,\widetilde N(ds,dz).
 \end{split}
\end{equation}
For any $w\in L^2$, Cauchy--Schwarz and $(a+b)^2\leq2a^2+2b^2$
give
\[
 |h_M(w,z)|^2
 \leq8\|w\|_2^2\|Gz\|_2^2+2\|Gz\|_2^4
 \leq(8\|w\|_2^2+2b_0^2)\|Gz\|_2^2.
\]
Integration in $z$ yields
\begin{equation}\label{eq:mass-bracket-bound}
 \int_{Z_0}|h_M(w,z)|^2\,\nu(dz)
   \le (8\|w\|_2^2+2b_0^2)q_0.
\end{equation}
The right-hand side has an integrable time integral when
$w=u(s-)$, by Theorem~\ref{thm:main}(ii). The martingale
$\mathcal M$ in \eqref{eq:pathwise-mass} is therefore well defined
and square integrable on $[0,T]$ for each $T<\infty$.

To pass to the limit in the martingale term, write
$\mathcal M^\varepsilon$ for the last term of
\eqref{eq:mass-finite}. On $Z_\varepsilon$ the difference
of its integrand and that of $\mathcal M$ equals
$2\operatorname{Re}\langle u^\varepsilon(s-)-u(s-),Gz\rangle$.
On $Z_0\setminus Z_\varepsilon$ the omitted integrand is
$h_M(u(s-),z)$. Since these two mark sets are disjoint, the
Poisson isometry and Doob's inequality give
\begin{align*}
 \mathbb E\sup_{0\leq t\leq T}
       |\mathcal M_t^\varepsilon-\mathcal M_t|^2
 &\leq4\mathbb E\int_0^T\int_{Z_\varepsilon}
   \bigl|2\operatorname{Re}\langle
       u^\varepsilon(s-)-u(s-),Gz\rangle_{L^2}\bigr|^2
            \,\nu(dz)\,ds\\
 &\quad+4\mathbb E\int_0^T\int_{Z_0\setminus Z_\varepsilon}
        |h_M(u(s-),z)|^2\,\nu(dz)\,ds.
\end{align*}
Using the preceding pointwise bounds, we obtain
\begin{align}
 \mathbb E\sup_{0\leq t\leq T}|\mathcal M_t^\varepsilon-\mathcal M_t|^2
 &\le 16Tq_0\,
       \mathbb E\sup_{0\leq s\leq T}\|u^\varepsilon(s)-u(s)\|_2^2
       \notag\\
 &\quad+4T\left(8\mathbb E\sup_{0\leq s\leq T}\|u(s)\|_2^2
                          +2b_0^2\right)q_3(\varepsilon)
 \longrightarrow0.
 \label{eq:mass-martingale-limit}
\end{align}
Here $\|Gz\|_2\le\|Gz\|_{H^3}$ and
Corollary~\ref{cor:Lq} with $q=2$ were used. Moreover,
\begin{align*}
 \mathbb E\sup_{0\leq t\leq T}
     \bigl|\|u^\varepsilon(t)\|_2^2-\|u(t)\|_2^2\bigr|
 &\leq
 \left(\mathbb E\sup_{0\leq t\leq T}
                 \|u^\varepsilon(t)-u(t)\|_2^2\right)^{1/2}\\
 &\quad\times
 \left(\mathbb E\sup_{0\leq t\leq T}
          (\|u^\varepsilon(t)\|_2+\|u(t)\|_2)^2\right)^{1/2}
 \longrightarrow0.
\end{align*}
The first factor tends to zero by Corollary~\ref{cor:Lq}, and the
second is uniformly bounded by \eqref{eq:main-moment}.
The drift in \eqref{eq:mass-finite} converges uniformly on $[0,T]$
to $q_0t$. Thus \eqref{eq:pathwise-mass} follows as an identity of
c\`adl\`ag processes on $[0,T]$. Taking a common event of
probability one for integer $T$ proves the simultaneous assertion
on $[0,\infty)$. Since $\mathbb E\mathcal M_t=0$, taking
expectations gives \eqref{eq:mean-mass}. This completes the proof.
\end{proof}

\begin{proposition}\label{dyn:massescape}
Assume that the effective noise is nonzero, in the sense that
$q_0>0$. For $R>\|u_0\|_2$, define
\[
 \tau_R=\inf\{t\ge0:\|u(t)\|_2>R\},
 \qquad \inf\varnothing=\infty.
\]
Then
\begin{equation}\label{eq:mass-exit-bound}
 \mathbb E\tau_R
  \le\frac{(R+b_0)^2-\|u_0\|_2^2}{q_0}<\infty.
\end{equation}
Consequently,
\begin{equation}\label{eq:mass-limsup}
 \mathbb P\left(
      \limsup_{t\to\infty}\|u(t)\|_2=\infty\right)=1.
\end{equation}
\end{proposition}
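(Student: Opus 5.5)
The plan is to apply optional stopping to the pathwise mass balance of Proposition~\ref{dyn:massbalance} at the bounded stopping times $\tau_R\wedge t$. First, $\tau_R$ is a stopping time. The process $\|u(t)\|_2$ is adapted and c\`adl\`ag, the filtration is right-continuous, and $\tau_R$ is the hitting time of the open set $(R,\infty)$. Since $\mathcal M$ is a square-integrable martingale on $[0,t]$, we have $\mathbb E\mathcal M_{\tau_R\wedge t}=0$, and therefore
\[
 q_0\,\mathbb E(\tau_R\wedge t)
 =\mathbb E\|u(\tau_R\wedge t)\|_2^2-\|u_0\|_2^2 .
\]

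The key step is a deterministic bound on the stopped mass, which controls the overshoot at the exit time. For $s<\tau_R$ we have $\|u(s)\|_2\le R$, hence $\|u(\tau_R-)\|_2\le R$ on $\{0<\tau_R<\infty\}$. By \eqref{prob:limitingjumps}, $\Delta_t u=G\,\Delta_tL$, and every jump of $L$ is a mark $z\in Z_0$ with $|z|\le 1$. Hence $\|\Delta_{\tau_R}u\|_2\le b_0$, and
\[
 \|u(\tau_R\wedge t)\|_2\le R+b_0
\]
almost surely. (If $\tau_R=0$, right continuity together with $\|u_0\|_2<R$ rules this out.) Combining the two displays gives
\[
 \mathbb E(\tau_R\wedge t)\le\frac{(R+b_0)^2-\|u_0\|_2^2}{q_0}.
\]
Monotone convergence as $t\to\infty$ then yields \eqref{eq:mass-exit-bound}, and in particular $\tau_R<\infty$ almost surely. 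The one point requiring care is that the jump identity holds outside a single null set, simultaneously for all jump times; this is exactly the content of \eqref{prob:limitingjumps}.

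For \eqref{eq:mass-limsup}, a single exit is not enough, since we need exits at arbitrarily late times. The plan is to restart via the strong Markov structure, or more simply to use the mean growth directly. A tidier route avoids the Markov property. Fix $R$ and $T$, and apply the same argument to the martingale increments after time $T$. Define $\sigma=\inf\{t\ge T:\|u(t)\|_2>R\}$. On $\{\|u(T)\|_2\le R\}$, optional stopping for $\mathcal M_{\sigma\wedge t}-\mathcal M_T$, conditioned on $\mathcal F_T$, gives
\[
 q_0\,\mathbb E[(\sigma\wedge t-T)\mathbf 1_{\{\|u(T)\|_2\le R\}}]\le(R+b_0)^2 .
\]
On the complementary event, $\sigma=T$ up to right-continuity issues, and we handle this by using $\ge R$ in place of $>R$ if needed. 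Hence $\sigma<\infty$ almost surely for every $T$. Taking integer $T$ and $R$ shows that almost surely, for all $R$, the mass exceeds $R$ at arbitrarily large times, which is \eqref{eq:mass-limsup}.

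The main obstacle I expect is measure-theoretic bookkeeping rather than estimates. One must verify that the hitting times are stopping times, that optional stopping is legitimate for the c\`adl\`ag square-integrable martingale $\mathcal M$ started at the deterministic time $T$, and that the overshoot bound holds for the limiting (not cutoff) solution. All of these are supplied by Proposition~\ref{dyn:massbalance} and \eqref{prob:limitingjumps}.
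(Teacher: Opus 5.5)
Your proof of \eqref{eq:mass-exit-bound} follows the paper: $\tau_R$ is a stopping time by right continuity, the stopped mass satisfies $\|u(t\wedge\tau_R)\|_2\le R+b_0$ because $\|u(\tau_R-)\|_2\le R$ and the jump is bounded, and optional stopping plus monotone convergence finish. For $\mathbb E\mathcal M_{t\wedge\tau_R}=0$, the paper uses the bracket bound \eqref{eq:mass-bracket-bound} on the stopped predictable integrand. Your appeal to optional sampling for the c\`adl\`ag square-integrable martingale at the bounded time $t\wedge\tau_R$ is equally valid.

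One step you assert without proof is that every jump of $L$ is a single mark. The identity \eqref{prob:limitingjumps} gives only $\Delta_tu=G\,\Delta_tL$. If two atoms of $N$ shared a time coordinate, $\Delta_tL$ would be a sum of marks, and the overshoot bound $b_0$ could fail. The paper proves separately that, on one full-probability event, no two atoms have the same time. It uses the finitely many atoms on $(0,j]\times Z_{1/k}$, whose time coordinates have a joint density, and intersects over $j,k$. This standard fact, not \eqref{prob:limitingjumps} itself, is the point that needs the extra line.

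For \eqref{eq:mass-limsup} you take a different route, and your motivating remark that ``a single exit is not enough'' is mistaken. The paper chooses levels $R_n\uparrow\infty$. On one full-probability event every $\tau_{R_n}$ is finite with $\|u(\tau_{R_n})\|_2\ge R_n$. A c\`adl\`ag path is bounded on compact intervals, so necessarily $\tau_{R_n}\to\infty$, which already gives exits at arbitrarily late times.

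Your restart argument at deterministic $T$ is nevertheless correct when written out. You multiply the balance on $[T,\sigma\wedge t]$ by the $\mathcal F_T$-measurable indicator of $\{\|u(T)\|_2\le R\}$ and use optional sampling between the bounded times $T\le\sigma\wedge t$. On $\{\|u(T)\|_2>R\}$ one has $\sigma=T$ exactly, with no right-continuity issue, since $T$ itself lies in the set. This route yields slightly more: $\mathbb E(\sigma-T)\le(R+b_0)^2/q_0$ uniformly in $T$, a bound on the expected waiting time to re-exceed level $R$. For the qualitative statement it is unnecessary, and the paper's appeal to local boundedness of c\`adl\`ag paths is the simpler argument.
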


\begin{proof}
For every $t>0$, right continuity gives
\[
 \{\tau_R<t\}
  =\bigcup_{r\in\mathbb Q\cap[0,t)}\{\|u(r)\|_2>R\}
  \in\mathcal F_t.
\]
The right continuity of the filtration then implies
$\{\tau_R\leq t\}\in\mathcal F_t$, so $\tau_R$ is a stopping
time. Since $\|u_0\|_2<R$,
right continuity at zero gives $\tau_R>0$ on every path under
consideration. On $\{\tau_R<\infty\}$, the definition of $\tau_R$
gives times $s_n\in[\tau_R,\tau_R+1/n)$ with $\|u(s_n)\|_2>R$.
Right continuity then yields $\|u(\tau_R)\|_2\geq R$, while the
left limit satisfies $\|u(\tau_R-)\|_2\leq R$.
To bound the jump at $\tau_R$, consider the nested sets
$Z_{1/k}$, $k\geq2$. For every integer $j\geq1$,
$N$ has finitely many atoms on $(0,j]\times Z_{1/k}$, since
$j\nu(Z_{1/k})\leq jk^2m_2$. Conditional on their number, the
time coordinates have a joint density and are therefore distinct
almost surely. Intersecting these events over $j$ and $k$ gives
one event on which no two atoms of $N$ share a time coordinate:
any two marks belong to some common $Z_{1/k}$ and any two finite
times belong to some common $(0,j]$. On its intersection with
the event in \eqref{prob:limitingjumps}, every nonzero jump of
$u$ equals $Gz$ for a single mark $z\in Z_0$. Consequently
$\|\Delta_tu\|_2\leq b_0$ for every $t>0$, and
\[
 R\leq\|u(\tau_R)\|_2
 \leq\|u(\tau_R-)\|_2+\|\Delta_{\tau_R}u\|_2
 \leq R+b_0
 \quad\hbox{on }\{\tau_R<\infty\}.
\]
For $0\leq t<\tau_R$, the norm is at most $R$. Thus, for every $t\geq0$,
\begin{equation}\label{eq:mass-overshoot}
 \|u(t\wedge\tau_R)\|_2\le R+b_0
 \quad\hbox{almost surely}.
\end{equation}
The stopped integral $\mathcal M_{t\wedge\tau_R}$ is a true
square-integrable martingale: its predictable integrand is
$\mathbf1_{\{0<s\leq\tau_R\}}h_M(u(s-),z)$, and
\eqref{eq:mass-bracket-bound} gives
\[
 \mathbb E\int_0^t\mathbf1_{\{s\leq\tau_R\}}
       \int_{Z_0}|h_M(u(s-),z)|^2\,\nu(dz)\,ds
 \leq t(8R^2+2b_0^2)q_0<\infty.
\]
Stopping
\eqref{eq:pathwise-mass}, taking expectations, and using
\eqref{eq:mass-overshoot} give
\[
 \|u_0\|_2^2+q_0\mathbb E(t\wedge\tau_R)
   =\mathbb E\|u(t\wedge\tau_R)\|_2^2
   \le (R+b_0)^2.
\]
Monotone convergence as $t\to\infty$ proves
\eqref{eq:mass-exit-bound}. In particular, $\tau_R<\infty$
almost surely.

Choose integers $R_n\uparrow\infty$ with $R_1>\|u_0\|_2$.
On a common probability-one event, each $\tau_{R_n}$ is finite
and $\|u(\tau_{R_n})\|_2\geq R_n$.
Boundedness of c\`adl\`ag paths on compact time intervals implies
$\tau_{R_n}\to\infty$: otherwise a bounded time interval would
contain values of $\|u\|_2$ exceeding every $R_n$.
The values at these times prove \eqref{eq:mass-limsup}.
This completes the proof.
\end{proof}

\begin{corollary}
\label{dyn:energy-growth}
For every $t\ge0$,
\begin{equation}\label{eq:energy-growth-lower}
 \mathbb E E(u(t))\ge\frac{(M(u_0)+q_0t)^3}{6},
 \qquad
 \mathbb E F(u(t))^2\ge\frac{(M(u_0)+q_0t)^6}{36}.
\end{equation}
If $q_0>0$, then also
$\limsup_{t\to\infty}E(u(t))=\infty$ almost surely.
\end{corollary}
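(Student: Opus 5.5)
The plan is to derive the cubic lower bound from the exact mean-mass identity \eqref{eq:mean-mass} together with Jensen/Hölder on the normalized torus. Since the spatial measure is a probability measure, Hölder gives, for every $w\in H^1$,
\[
 M(w)=\|w\|_2^2\le\|w\|_6^2,
 \qquad\hbox{hence}\qquad
 \tfrac16 M(w)^3\le\tfrac16\|w\|_6^6\le E(w).
\]
Apply this pathwise with $w=u(t)$ and take expectations. The function $x\mapsto x^3$ is convex on $[0,\infty)$, so Jensen's inequality and Proposition~\ref{prop:mean-mass} give
\[
 \mathbb E E(u(t))\ge\tfrac16\,\mathbb E M(u(t))^3
 \ge\tfrac16\bigl(\mathbb E M(u(t))\bigr)^3
 =\tfrac16\bigl(M(u_0)+q_0t\bigr)^3 .
\]
Integrability is not an issue: $E(u(t))\le F(u(t))$, and $\mathbb E F(u(t))^2<\infty$ by Theorem~\ref{thm:main}(ii). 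Even without it, the inequality holds in $[0,\infty]$.

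For the second bound, note that $F(w)\ge E(w)\ge0$. Jensen for the square then yields
\[
 \mathbb E F(u(t))^2\ge\bigl(\mathbb E E(u(t))\bigr)^2\ge\tfrac1{36}\bigl(M(u_0)+q_0t\bigr)^6 .
\]
Alternatively, one can use $F\ge E\ge M^3/6$ pointwise and apply Jensen to $x^6$.

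For the almost-sure statement with $q_0>0$, use the pathwise inequality $E(u(t))\ge\frac16\|u(t)\|_2^6$. Proposition~\ref{dyn:massescape} gives $\limsup_{t\to\infty}\|u(t)\|_2=\infty$ almost surely. On that same probability-one event this forces $\limsup_{t\to\infty}E(u(t))=\infty$.

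There is no serious obstacle. The only points to check are that the normalization of Haar measure makes the constant in $\|w\|_2\le\|w\|_6$ equal to one, and that the mean-mass identity holds at each fixed time, which is exactly \eqref{eq:mean-mass}.
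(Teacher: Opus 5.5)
Your proposal is correct and follows essentially the paper's argument: H\"older on the normalized torus gives $E\geq M^3/6$, Jensen combined with the mean-mass identity \eqref{eq:mean-mass} gives the cubic bound, and the pathwise inequality together with Proposition~\ref{dyn:massescape} gives the almost-sure divergence. The only difference is cosmetic. You obtain the $F^2$ bound from $\mathbb E F^2\geq(\mathbb E E)^2$, whereas the paper uses $F^2\geq M^6/36$ pointwise and applies Jensen to $x^6$; both are valid.
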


\begin{proof}
Since Haar measure has total mass one, H\"older's inequality gives
\[
 M(w)=\int_{\mathbb T^3}|w|^2\,dx
 \leq\left(\int_{\mathbb T^3}|w|^6\,dx\right)^{1/3}
       \left(\int_{\mathbb T^3}1\,dx\right)^{2/3}
 =\|w\|_6^2.
\]
Consequently,
\[
 E(w)\geq\frac16\|w\|_6^6\geq\frac16M(w)^3,
 \qquad
 F(w)^2\geq E(w)^2\geq\frac1{36}M(w)^6.
\]
The moment bound \eqref{eq:main-moment} implies integrability of
$F(u(t))^2$, and hence of $M(u(t))^6$. Jensen's inequality and
\eqref{eq:mean-mass} therefore yield
\begin{align*}
 \mathbb E E(u(t))
 &\geq\frac16\mathbb E M(u(t))^3
  \geq\frac16\bigl(\mathbb E M(u(t))\bigr)^3
  =\frac16(M(u_0)+q_0t)^3,\\
 \mathbb E F(u(t))^2
 &\geq\frac1{36}\mathbb E M(u(t))^6
  \geq\frac1{36}\bigl(\mathbb E M(u(t))\bigr)^6
  =\frac1{36}(M(u_0)+q_0t)^6.
\end{align*}
If $q_0>0$, the pointwise inequality
$E(u(t))\geq M(u(t))^3/6$ and \eqref{eq:mass-limsup} imply the
asserted almost-sure energy divergence along a sequence of times.
This completes the proof.
\end{proof}

\subsection{Scattering obstructions on the torus}
\label{subsec:scattering-obstructions}

For $s\in\{0,1\}$, we say that $u$ \emph{scatters freely in $H^s$}
if there exists $u_+\in H^s$ such that
\begin{equation}\label{dyn:free-scattering}
 \lim_{t\to\infty}\|u(t)-S(t)u_+\|_{H^s}=0.
\end{equation}
The case $s=0$ is free scattering in $L^2$.
The absence of nontrivial asymptotically linear solutions for the
periodic cubic equation was discussed by Colliander, Keel,
Staffilani, Takaoka and Tao~\cite[Appendix]{CKSTT2010}.
The following proposition treats the deterministic quintic equation.

\begin{proposition}
\label{dyn:det-nonscattering}
Let $w\in C([0,\infty);H^1)$ be a global solution of
\[
 \partial_t w=i(\Delta w-|w|^4w)
\]
in the deterministic energy solution class. If there exists
$w_+\in H^1$ for which
$\|w(t)-S(t)w_+\|_{H^1}\to0$ as $t\to\infty$, then
$w(0)=0$ and $w\equiv0$.
\end{proposition}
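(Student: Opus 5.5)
We compare the conserved quantities of $w$ along the flow with their limits along the free evolution $S(t)w_+$. Mass and energy are conserved for $w$ by Proposition~\ref{det:unforced}. Since $S(t)$ is unitary on $L^2$ and on $\dot H^1$, the convergence $\|w(t)-S(t)w_+\|_{H^1}\to0$ gives
\[
 \|w(0)\|_2=\|w_+\|_2,\qquad \|\nabla w(0)\|_2=\|\nabla w_+\|_2 .
\]
Energy conservation then yields
\[
 \frac16\|w(0)\|_6^6=\lim_{t\to\infty}\frac16\|w(t)\|_6^6 .
\]
The Sobolev embedding $H^1\hookrightarrow L^6$ lets us replace $w(t)$ by $S(t)w_+$ in this limit. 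Hence
\[
 \|w(0)\|_6^6=\lim_{t\to\infty}\|S(t)w_+\|_6^6 ,
\]
and in particular this limit exists. The whole argument therefore reduces to showing that the $L^6$ norm of a free periodic solution cannot tend to a limit smaller than a positive multiple of its mass.

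The key step is a lower bound from the normalized Haar measure, exactly as in the proof of Corollary~\ref{dyn:energy-growth}. Hölder's inequality gives $\|h\|_2\le\|h\|_6$ for every $h$. Applying this to $h=S(t)w_+$ gives
\[
 \|S(t)w_+\|_6^6\ge\|S(t)w_+\|_2^6=\|w_+\|_2^6 ,
\]
so the limit is at least $\|w_+\|_2^6=\|w(0)\|_2^6$. This is only an inequality, and it is also consistent with $\|w(0)\|_6\ge\|w(0)\|_2$, so by itself it gives no contradiction. A sharper invariant is therefore needed. I would use the periodicity of the free flow: on $\mathbb T^3$ the phases $e^{-it|\xi|^2}$ all have integer frequencies, so $S(2\pi k)=\mathrm{Id}$ for every integer $k$. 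Along the times $t_k=2\pi k$, the hypothesis then gives $w(2\pi k)\to w_+$ in $H^1$.

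Next I would exploit the nonlinear dynamics together with this recurrence. Set $\phi(t)=S(-t)w(t)$. The hypothesis means $\phi(t)\to w_+$ in $H^1$, and the Duhamel formula gives
\[
 \phi(t)-\phi(s)=-i\int_s^tS(-\tau)\mathcal N(w(\tau))\,d\tau .
\]
Fix $\tau\in[0,2\pi]$ and use $S(2\pi k+\tau)=S(\tau)$. Then $w(2\pi k+\tau)\to S(\tau)w_+$ in $H^1$, locally uniformly in $\tau$. Consider the integral of the Duhamel term over $[2\pi k,2\pi(k+1)]$. Since $\phi$ converges, this integral tends to $0$ in $H^{-1}$. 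By the local Lipschitz bound \eqref{det:Hminusone}, it also converges to
\[
 -i\int_0^{2\pi}S(-\tau)\mathcal N(S(\tau)w_+)\,d\tau .
\]
So this integral vanishes. Now pair the vanishing integral with $w_+$. Using $\langle S(-\tau)\mathcal N(S(\tau)w_+),w_+\rangle=\langle\mathcal N(S(\tau)w_+),S(\tau)w_+\rangle=\|S(\tau)w_+\|_6^6$, we obtain
\[
 \int_0^{2\pi}\|S(\tau)w_+\|_6^6\,d\tau=0 .
\]
This pairing is legitimate in the $H^{-1}$--$H^1$ duality. Hence $S(\tau)w_+=0$ for almost every $\tau$, so $w_+=0$. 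Then $\|w(0)\|_2=\|w_+\|_2=0$, so $w(0)=0$, and uniqueness of the deterministic flow gives $w\equiv0$.

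I expect the main obstacle to be the rigorous passage to the limit in the Duhamel integral, that is, the uniform-in-$\tau$ convergence $w(2\pi k+\tau)\to S(\tau)w_+$ in $H^1$. This should follow directly from the hypothesis combined with $S(2\pi k)=\mathrm{Id}$ and unitarity. The remaining steps are routine pairings.
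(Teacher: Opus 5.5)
Your argument is correct, and it reaches the conclusion by a different mechanism than the paper. Both proofs pair the interaction-picture Duhamel increment with $w_+$ and identify the leading term as a sixth power of an $L^6$ norm.

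The paper does not use periodicity of the free flow. It sets $A(t)=\langle S(-t)w(t),w_+\rangle$ and shows $\operatorname{Im}A'(t)\le-\|w(t)\|_6^6+o(1)$. It then uses mass conservation and $\|w\|_6\ge\|w\|_2$ on normalized Haar measure to get $\operatorname{Im}A'(t)\le-\mu^3/2$ eventually. So the bounded quantity $\operatorname{Im}A$ would drift to $-\infty$, a contradiction.

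You instead use the arithmetic fact $S(2\pi)=\mathrm{Id}$ on $(\mathbb R/2\pi\mathbb Z)^3$, which holds because $|\xi|^2\in\mathbb Z$. This turns $\phi(2\pi(k+1))-\phi(2\pi k)\to0$ into the exact identity $\int_0^{2\pi}S(-\tau)\mathcal N(S(\tau)w_+)\,d\tau=0$. The uniform-in-$\tau$ convergence you flagged as the main obstacle is immediate from
$\sup_{\tau\in[0,2\pi]}\|w(2\pi k+\tau)-S(\tau)w_+\|_{H^1}\le\sup_{t\ge2\pi k}\|w(t)-S(t)w_+\|_{H^1}$,
together with the local Lipschitz bound for $\mathcal N:H^1\to H^{-1}$. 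Pairing with $w_+$ then gives $\int_0^{2\pi}\|S(\tau)w_+\|_6^6\,d\tau=0$, hence $w_+=0$. This needs neither a contradiction argument nor a lower bound on the $L^6$ norm.

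The trade-off is robustness. Your route relies on exact rational periodicity of the linear group, so as written it would not work on irrational rectangular tori or general compact manifolds, where one would need an almost-periodicity substitute. The paper's drift argument needs only unitarity of $S(t)$, mass conservation, and finite normalized volume.

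One correction: delete your opening paragraph. The claim $\|\nabla w(0)\|_2=\|\nabla w_+\|_2$ is not justified, since only the total energy is conserved, not its kinetic part. The identity $\|w(0)\|_6^6=\lim_{t\to\infty}\|S(t)w_+\|_6^6$ derived from it is therefore unsupported. Nothing in your final argument uses it. The last step needs only mass conservation, which does give $\|w(0)\|_2=\|w_+\|_2$.
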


\begin{proof}
Write $\mu=M(w(0))=\|w(t)\|_2^2$. Conservation of mass and energy
gives $\sup_{t\ge0}\|w(t)\|_{H^1}<\infty$. Suppose, for a
contradiction, that $\mu>0$, and put $f(t)=S(t)w_+$.
We use the complex $L^2$ pairing, linear in its first entry, and set
\[
 A(t)=\int_{\mathbb T^3}S(-t)w(t,x)\,\overline{w_+(x)}\,dx.
\]
This scalar function is bounded by $\mu^{1/2}\|w_+\|_2$.
The mild equation in the interaction representation reads
\[
 S(-t)w(t)=w(0)-i\int_0^t S(-s)\mathcal N(w(s))\,ds
 \quad\hbox{in }H^{-1}.
\]
Since $\mathcal N:H^1\to H^{-1}$ is continuous and
$S(t)$ is strongly continuous on $H^{-1}$, the right-hand side
is continuously differentiable as an $H^{-1}$-valued map.
Pairing with the fixed element $w_+\in H^1$ is therefore legitimate,
and unitarity, first for smooth functions and then by duality, yields
\[
 A'(t)=-i\int_{\mathbb T^3}|w(t)|^4w(t)\,
                         \overline{f(t)}\,dx.
\]
All terms in this identity are integrable because
$w(t),f(t)\in L^6$. Consequently,
\begin{align*}
 \operatorname{Im}A'(t)
 &=-\|w(t)\|_6^6
   +\operatorname{Re}\int_{\mathbb T^3}|w(t)|^4w(t)
                         \overline{w(t)-f(t)}\,dx\\
 &\le-\|w(t)\|_6^6
      +\|w(t)\|_6^5\|w(t)-f(t)\|_6.
\end{align*}
Sobolev embedding and the uniform $H^1$ bound give
\[
 \|w(t)\|_6^5\|w(t)-f(t)\|_6
 \lesssim \left(\sup_{s\geq0}\|w(s)\|_{H^1}\right)^5
                         \|w(t)-f(t)\|_{H^1}
 \longrightarrow0.
\]
Since Haar measure is normalized to have total
mass one, H\"older's inequality and mass conservation give
$\|w(t)\|_6^6\ge\|w(t)\|_2^6=\mu^3$.
Choose $T_0$ so that the error term is at most $\mu^3/2$ for
every $t\geq T_0$. Then, for $t\geq T_0$,
\[
 \operatorname{Im}A(t)
 =\operatorname{Im}A(T_0)
       +\int_{T_0}^t\operatorname{Im}A'(s)\,ds
 \leq\operatorname{Im}A(T_0)-\frac{\mu^3}{2}(t-T_0).
\]
The right-hand side tends to $-\infty$, contradicting
$|A(t)|\leq\mu^{1/2}\|w_+\|_2$. Hence $\mu=0$ and mass
conservation gives $w\equiv0$. This completes the proof.
\end{proof}

\begin{corollary}
\label{dyn:stoch-nonscattering}
Assume $q_0>0$. On one event of probability one, there is no
$u_+\in L^2(\mathbb T^3)$, including a sample-dependent choice,
such that
\[
 \|u(t)-S(t)u_+\|_2\longrightarrow0\qquad(t\to\infty).
\]
In fact, $u$ cannot converge in $L^2$ to any trajectory that is
bounded in $L^2$ on a tail interval. This excludes asymptotic
convergence to any deterministic unforced energy solution, also
when its initial state is allowed to depend on the sample.
\end{corollary}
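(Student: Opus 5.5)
The proof reduces everything to Proposition~\ref{dyn:massescape}: on a bounded reference trajectory the mass of $u$ would stay bounded, contradicting \eqref{eq:mass-limsup}. Fix the probability-one event $\Omega_0$ on which \eqref{eq:mass-limsup} holds, that is, $\limsup_{t\to\infty}\|u(t)\|_2=\infty$. This event is defined without reference to any candidate profile, so one null set serves for all sample-dependent choices simultaneously. This disposes of the quantifier issue in the statement: we never take a union over uncountably many $u_+$. All remaining reasoning is pathwise on $\Omega_0$.

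We prove the general assertion first. Fix $\omega\in\Omega_0$ and suppose there is a function $y:[T_0,\infty)\to L^2$ with $\sup_{t\ge T_0}\|y(t)\|_2=:Y<\infty$ and $\|u(t)-y(t)\|_2\to0$. Then there is $T_1\ge T_0$ with $\|u(t)-y(t)\|_2\le1$ for $t\ge T_1$. Hence $\|u(t)\|_2\le Y+1$ on $[T_1,\infty)$, so $\limsup_{t\to\infty}\|u(t)\|_2\le Y+1<\infty$, contradicting the choice of $\omega$. No measurability of $y$ in $\omega$ is needed, since the argument is carried out for each fixed sample.

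The two special cases follow by checking boundedness of the reference trajectory. For $y(t)=S(t)u_+$ with any $u_+\in L^2$ (possibly depending on $\omega$), unitarity of $S(t)$ on $L^2$ gives $\|y(t)\|_2=\|u_+\|_2$ for all $t$, so free scattering in $L^2$ is excluded. For a deterministic unforced energy solution $w$ with a possibly sample-dependent initial state in $H^1$, mass conservation from Proposition~\ref{det:unforced} gives $\|w(t)\|_2=\|w(0)\|_2$, so again the trajectory is bounded. Convergence in $H^1$ is stronger than convergence in $L^2$, so free scattering in $H^1$ is also excluded.

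There is no genuine obstacle here. The only point that needs care is making the exceptional set independent of the reference trajectory, and this is handled by fixing $\Omega_0$ from \eqref{eq:mass-limsup} before choosing $u_+$ or $w$.
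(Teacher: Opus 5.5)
Your proposal is correct and matches the paper's proof essentially step for step: fix the single full-probability event from Proposition~\ref{dyn:massescape} on which $\limsup_{t\to\infty}\|u(t)\|_2=\infty$, and argue pathwise that convergence to a trajectory bounded on a tail interval would bound the mass there. The two special cases then follow from unitarity of $S(t)$ for free orbits and from mass conservation for unforced energy solutions, and your extra remark that $H^1$ convergence implies $L^2$ convergence is a harmless addition.
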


\begin{proof}
Use the single full-probability event in
Proposition~\ref{dyn:massescape} on which
$\limsup_{t\to\infty}\|u(t)\|_2=\infty$.
Fix a sample in this event. If $a$ is a trajectory with
$B:=\sup_{t\geq T_0}\|a(t)\|_2<\infty$ and
$\|u(t)-a(t)\|_2\to0$, choose $T_1\geq T_0$ so that the
distance is at most one for every $t\geq T_1$. Then
\[
 \sup_{t\geq T_1}\|u(t)\|_2
 \leq\sup_{t\geq T_1}\|u(t)-a(t)\|_2
       +\sup_{t\geq T_1}\|a(t)\|_2
 \leq1+B,
\]
contradicting the defining property of the event. A free orbit
$a(t)=S(t)u_+$ has norm $\|u_+\|_2$, and a deterministic
unforced energy solution has constant mass. These arguments
hold for every target trajectory on the chosen event, including
targets chosen after the sample is fixed. This completes the proof.
\end{proof}

\begin{corollary}
\label{dyn:jump-obstruction}
Assume $q_0>0$. On one event of probability one, no continuous
path $a:[0,\infty)\to L^2$ satisfies
\[
 \|u(t)-a(t)\|_2\longrightarrow0\qquad(t\to\infty).
\]
The path $a$ may depend on the sample and may be unbounded.
\end{corollary}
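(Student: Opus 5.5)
We argue by contradiction on the single full-probability event already used in Proposition~\ref{dyn:massescape}, intersected with the event of \eqref{prob:limitingjumps} and with the event on which no two atoms of $N$ share a time coordinate. On this event every nonzero jump of $u$ equals $Gz$ for one mark $z\in Z_0$. The idea is that a continuous limit path forces the jumps of $u$ to become small eventually. Small jumps are compatible with infinite activity, so instead we show that \emph{large} jumps must keep occurring. Since $a$ may be unbounded, the boundedness argument of Corollary~\ref{dyn:stoch-nonscattering} does not apply, and the jump structure must be used instead.

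First step: if $\|u(t)-a(t)\|_2\to0$ with $a$ continuous, then $\|\Delta_tu\|_2\to0$ as $t\to\infty$. Indeed, for $t>0$ take $s\uparrow t$. Continuity of $a$ and the existence of $u(t-)$ give
\[
\|\Delta_tu\|_2\le\limsup_{s\uparrow t}\|u(s)-a(s)\|_2+\|u(t)-a(t)\|_2,
\]
which tends to zero as $t\to\infty$. Hence there is $T_1$ such that $\|\Delta_tu\|_2\le\kappa/2$ for all $t\ge T_1$, where $\kappa>0$ is chosen below.

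Second step: choose $\kappa$ and a mark set of large jumps. Since $q_0=\int_{Z_0}\|Gz\|_2^2\,\nu(dz)>0$, the Borel set $A_\kappa=\{z\in Z_0:\|Gz\|_2>\kappa\}$ has $\nu(A_\kappa)>0$ for some $\kappa>0$. If instead $\|Gz\|_2\le\kappa$ $\nu$-a.e.\ for every $\kappa>0$, then $Gz=0$ $\nu$-a.e.\ and $q_0=0$. Fix such a $\kappa$ and an $r\in(0,1)$ with $0<\nu(A_\kappa\cap Z_r)<\infty$. This is possible because $A_\kappa\cap Z_r\uparrow A_\kappa$ as $r\downarrow0$ and $\nu(Z_r)\le r^{-2}m_2$. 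Then $N((n,n+1]\times(A_\kappa\cap Z_r))$, $n\ge0$, are i.i.d.\ Poisson variables with positive mean. By Borel--Cantelli (second lemma, using independence), almost surely infinitely many of them are positive. Intersecting this event with the previous ones, atoms $(t,z)$ with $z\in A_\kappa$ occur at arbitrarily large times.

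Conclusion: at each such atom, \eqref{prob:limitingjumps} and the absence of simultaneous atoms give $\Delta_tu=Gz$ with $\|Gz\|_2>\kappa$. This contradicts the first step for $t\ge T_1$. The argument holds for every continuous $a$ on the fixed event, so sample-dependent and unbounded targets are included.

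The main point to check carefully is that the exceptional null set is chosen \emph{before} $a$. This holds, since the event is defined only through $N$ and $u$. One also needs that the identity $\Delta_tu=G\,\Delta_tL$ reduces to a single mark at each atom time, which the disjointness event from the proof of Proposition~\ref{dyn:massescape} already supplies.
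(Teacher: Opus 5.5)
Your proof is correct and follows the paper's argument. It uses the same full-probability event (distinct atom times together with $\Delta_t u=G\,\Delta_t L$), a mark set $\{\|Gz\|_2>\kappa\}$ of positive finite $\nu$-measure that the second Borel--Cantelli lemma shows is hit in infinitely many unit time windows, and the left-limit triangle inequality showing that a continuous target forces the jumps to vanish; the only cosmetic difference is that you get finiteness by intersecting with $Z_r$, whereas the paper notes directly that $\nu(A_\kappa)\le\kappa^{-2}q_0$.
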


\begin{proof}
Since $q_0>0$, the set $\{z\in Z_0:Gz\ne0\}$ has positive
$\nu$-measure. It is the union of the sets
$\{z\in Z_0:\|Gz\|_2>1/n\}$, $n\geq1$. Hence there is $r>0$
such that
\[
 A_r=\{z\in Z_0:\|Gz\|_2>r\},\qquad
 0<\nu(A_r)\leq r^{-2}q_0<\infty.
\]
The events $\{N((j,j+1]\times A_r)\geq1\}$, $j\geq0$,
are independent and each has probability $1-e^{-\nu(A_r)}>0$.
By the second Borel--Cantelli lemma, infinitely many of them
occur almost surely. The corresponding arrival times can
therefore be enumerated as $t_n\to\infty$. Intersect this event
with the one constructed in the proof of
Proposition~\ref{dyn:massescape}, on which distinct Poisson atoms
have distinct time coordinates and \eqref{prob:limitingjumps}
holds at every time. At $t_n$ there is exactly one mark $z_n$,
and $z_n\in A_r$. Hence
\[
 \|u(t_n)-u(t_n-)\|_2>r\qquad(n\geq1).
\]
If a continuous path $a$ satisfied the asserted convergence,
then for every $\delta>0$ there would be $T$ such that
$\|u(t)-a(t)\|_2<\delta$ for all $t\geq T$.
For $t_n>T$, choose $s_k\uparrow t_n$ with $s_k\geq T$.
The left limit of $u$ and the continuity of $a$ give
\[
 \|u(t_n-)-a(t_n)\|_2
 =\lim_{k\to\infty}\|u(s_k)-a(s_k)\|_2\leq\delta.
\]
The triangle inequality then yields
\[
 r<\|u(t_n)-u(t_n-)\|_2
 \leq\|u(t_n)-a(t_n)\|_2
       +\|a(t_n)-u(t_n-)\|_2
 \leq2\delta,
\]
which is impossible for $\delta<r/2$. Since the event was chosen
before $a$, the conclusion holds simultaneously for every
continuous path $a$. This completes the proof.
\end{proof}

\begin{corollary}
\label{dyn:nonuniform-zero-noise}
Assume $q_0>0$, and let $u^\lambda$ and $w$ be as in
Corollary~\ref{cor:vanishing}. For each fixed $0<\lambda\leq1$,
\[
 \limsup_{t\to\infty}\|u^\lambda(t)-w(t)\|_2=\infty
 \quad\hbox{almost surely}.
\]
\end{corollary}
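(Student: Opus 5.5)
The plan is to reduce the claim to Proposition~\ref{dyn:massescape} applied to the equation with coefficient $\lambda G$, using that the deterministic solution $w$ conserves mass. Fix $0<\lambda\le1$. The process $u^\lambda$ is the solution of Theorem~\ref{thm:main} with $G$ replaced by $\lambda G$. Assumption~\ref{ass:noise} is invariant under this replacement, since $\lambda G$ is again a real-linear map into $H^3$ with profiles $\lambda f_k$. The corresponding effective noise size is
\[
 q_0^{(\lambda)}=\int_{Z_0}\|\lambda Gz\|_2^2\,\nu(dz)=\lambda^2q_0>0.
\]
Hence Propositions~\ref{dyn:massbalance} and \ref{dyn:massescape} apply verbatim to $u^\lambda$, with $b_0$ replaced by $\lambda b_0$. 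This yields an event of probability one on which $\limsup_{t\to\infty}\|u^\lambda(t)\|_2=\infty$.

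Next I would use that $w$ is the global deterministic solution with datum $u_0$ supplied by Proposition~\ref{det:unforced}. It conserves mass, so $\|w(t)\|_2=\|u_0\|_2$ for every $t\ge0$. On the full-probability event above, the reverse triangle inequality gives
\[
 \|u^\lambda(t)-w(t)\|_2\ge\|u^\lambda(t)\|_2-\|u_0\|_2 .
\]
Taking the upper limit as $t\to\infty$ gives the claim. Equivalently, one may invoke Corollary~\ref{dyn:stoch-nonscattering} with the $L^2$-bounded target trajectory $a=w$ and observe that the same event forces divergence, not just non-convergence.

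There is no real obstacle. The only point needing care is to check that the proof of Proposition~\ref{dyn:massescape} uses only Assumption~\ref{ass:noise}, the moment bound \eqref{eq:main-moment}, and the jump identification \eqref{prob:limitingjumps}. All three hold for $\lambda G$ with constants uniform in $\lambda\in(0,1]$: the uniformity for \eqref{eq:main-moment} is \eqref{eq:lambda-moment}, and the other two are recorded in Proposition~\ref{prob:energy}. The quantitative exit bound then reads
\[
 \mathbb E\tau_R\le\frac{(R+\lambda b_0)^2-\|u_0\|_2^2}{\lambda^2q_0}.
\]
This degenerates as $\lambda\downarrow0$, which is consistent with the finite-time convergence of Corollary~\ref{cor:vanishing}. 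Together they show that the vanishing-noise limit is not uniform on $[0,\infty)$.
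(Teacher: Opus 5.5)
Your proposal is correct and matches the paper's proof: you apply Proposition~\ref{dyn:massescape} to the coefficient $\lambda G$, whose mass injection rate is $\lambda^2 q_0>0$, and then combine mass conservation for $w$ with the reverse triangle inequality. One minor slip in your side remarks: the jump identification \eqref{prob:limitingjumps} is established in the paragraph preceding Proposition~\ref{prob:uniqueness}, not in Proposition~\ref{prob:energy}, and for fixed $\lambda$ you do not need uniformity in $\lambda$.
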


\begin{proof}
For each fixed $0<\lambda\leq1$, the coefficient $\lambda G$
satisfies Assumption~\ref{ass:noise} and has positive mass
injection rate $\lambda^2q_0$. Proposition~\ref{dyn:massescape}
therefore gives $\limsup_{t\to\infty}\|u^\lambda(t)\|_2=\infty$
almost surely. Mass conservation for $w$ and the reverse
triangle inequality give, for every $t\geq0$,
\[
 \|u^\lambda(t)-w(t)\|_2
 \geq\|u^\lambda(t)\|_2-\|w(t)\|_2
 =\|u^\lambda(t)\|_2-\|u_0\|_2.
\]
Taking the upper limit as $t\to\infty$ proves the assertion.
This completes the proof.
\end{proof}

\section*{Data availability}
No datasets were generated or analysed during this study.

\section*{Declaration of AI assistance}
During the preparation of this manuscript, the authors used ChatGPT
as an auxiliary tool for language editing, organization of arguments, proof
auditing, and manuscript preparation. All mathematical statements, proofs, references,
and conclusions appearing in the manuscript were independently verified by
the authors, who take full responsibility for the content.



\begin{thebibliography}{99}

\bibitem{Applebaum2009}D. Applebaum,
\emph{L\'evy Processes and Stochastic Calculus}, second edition,
Cambridge Studies in Advanced Mathematics, vol.~116,
Cambridge University Press, Cambridge, 2009.

\bibitem{BRZ2014}V. Barbu, M. R\"ockner and D. Zhang,
Stochastic nonlinear Schr\"odinger equations with linear multiplicative noise: rescaling approach,
\emph{J. Nonlinear Sci.} \textbf{24} (2014), no.~3, 383--409.

\bibitem{BRZ2016}V. Barbu, M. R\"ockner and D. Zhang,
Stochastic nonlinear Schr\"odinger equations,
\emph{Nonlinear Anal.} \textbf{136} (2016), 168--194.

\bibitem{Bourgain1993}J. Bourgain,
Fourier transform restriction phenomena for certain lattice subsets and applications to nonlinear evolution equations. Part I: Schr\"odinger equations,
\emph{Geom. Funct. Anal.} \textbf{3} (1993), no.~2, 107--156.

\bibitem{BD2015}J. Bourgain and C. Demeter,
The proof of the $\ell^2$ decoupling conjecture,
\emph{Ann. of Math. (2)} \textbf{182} (2015), no.~1, 351--389.

\bibitem{BHM2020}Z. Brze\'{z}niak, F. Hornung and U. Manna,
Weak martingale solutions for the stochastic nonlinear Schr\"odinger equation driven by pure jump noise,
\emph{Stoch. Partial Differ. Equ. Anal. Comput.} \textbf{8} (2020), no.~1, 1--53.

\bibitem{BHW2019}Z. Brze\'{z}niak, F. Hornung and L. Weis,
Martingale solutions for the stochastic nonlinear Schr\"odinger equation in the energy space,
\emph{Probab. Theory Related Fields} \textbf{174} (2019), nos.~3--4, 1273--1338.

\bibitem{BHW2022}Z. Brze\'{z}niak, F. Hornung and L. Weis,
Uniqueness of martingale solutions for the stochastic nonlinear Schr\"odinger equation on 3d compact manifolds,
\emph{Stoch. Partial Differ. Equ. Anal. Comput.} \textbf{10} (2022), no.~3, 828--857.

\bibitem{BLZ2021}Z. Brze\'{z}niak, W. Liu and J. Zhu,
The stochastic Strichartz estimates and stochastic nonlinear Schr\"odinger equations driven by L\'evy noise,
\emph{J. Funct. Anal.} \textbf{281} (2021), no.~4, Paper No.~109021.

\bibitem{BM2014}Z. Brze\'{z}niak and A. Millet,
On the stochastic Strichartz estimates and the stochastic nonlinear Schr\"odinger equation on a compact Riemannian manifold,
\emph{Potential Anal.} \textbf{41} (2014), no.~2, 269--315.

\bibitem{BGT2004}N. Burq, P. G\'erard and N. Tzvetkov,
Strichartz inequalities and the nonlinear Schr\"odinger equation on compact manifolds,
\emph{Amer. J. Math.} \textbf{126} (2004), no.~3, 569--605.

\bibitem{Cazenave2003}T. Cazenave,
\emph{Semilinear Schr\"odinger Equations},
Courant Lecture Notes in Mathematics, vol.~10,
Courant Institute of Mathematical Sciences, New York, and American Mathematical Society, Providence, RI, 2003.

\bibitem{CLO2021}K. Cheung, G. Li and T. Oh,
Almost conservation laws for stochastic nonlinear Schr\"odinger equations,
\emph{J. Evol. Equ.} \textbf{21} (2021), 1865--1894.

\bibitem{CM2019}K. Cheung and R. Mosincat,
Stochastic nonlinear Schr\"odinger equations on tori,
\emph{Stoch. Partial Differ. Equ. Anal. Comput.} \textbf{7} (2019), no.~2, 169--208.

\bibitem{CKSTT2008}J. Colliander, M. Keel, G. Staffilani, H. Takaoka and T. Tao,
Global well-posedness and scattering for the energy-critical nonlinear Schr\"odinger equation in $\mathbb{R}^3$,
\emph{Ann. of Math. (2)} \textbf{167} (2008), no.~3, 767--865.

\bibitem{CKSTT2010}J. Colliander, M. Keel, G. Staffilani,
H. Takaoka and T. Tao,
Transfer of energy to high frequencies in the cubic defocusing
nonlinear Schr\"odinger equation,
\emph{Invent. Math.} \textbf{181} (2010), 39--113.

\bibitem{dBD1999}A. de Bouard and A. Debussche,
A stochastic nonlinear Schr\"odinger equation with multiplicative noise,
\emph{Comm. Math. Phys.} \textbf{205} (1999), no.~1, 161--181.

\bibitem{dBD2003}A. de Bouard and A. Debussche,
The stochastic nonlinear Schr\"odinger equation in $H^1$,
\emph{Stoch. Anal. Appl.} \textbf{21} (2003), no.~1, 97--126.

\bibitem{dBH2019}A. de Bouard and E. Hausenblas,
The nonlinear Schr\"odinger equation driven by jump processes,
\emph{J. Math. Anal. Appl.} \textbf{475} (2019), no.~1, 215--252.

\bibitem{dBHO2019}A. de Bouard, E. Hausenblas and M. Ondrej\'at,
Uniqueness of the nonlinear Schr\"odinger equation driven by jump processes,
\emph{NoDEA Nonlinear Differential Equations Appl.} \textbf{26} (2019), no.~3, Paper No.~22.

\bibitem{FX2021}C. Fan and W. Xu,
Global well-posedness for the defocussing mass-critical stochastic nonlinear Schr\"odinger equation on $\mathbb{R}$ at $L^2$ regularity,
\emph{Anal. PDE} \textbf{14} (2021), no.~8, 2561--2594.

\bibitem{FXWZ2021}C. Fan and W. Xu,
A Wong--Zakai theorem for the stochastic mass-critical nonlinear Schr\"odinger equation,
\emph{SIAM J. Math. Anal.} \textbf{53} (2021), no.~3, 3681--3715.

\bibitem{HHK2009}M. Hadac, S. Herr and H. Koch,
Well-posedness and scattering for the KP-II equation in a critical space,
\emph{Ann. Inst. H. Poincar\'e Anal. Non Lin\'eaire} \textbf{26} (2009), no.~3, 917--941.

\bibitem{HHK2010}M. Hadac, S. Herr and H. Koch,
Erratum to ``Well-posedness and scattering for the KP-II equation in a critical space'',
\emph{Ann. Inst. H. Poincar\'e Anal. Non Lin\'eaire} \textbf{27} (2010), no.~3, 971--972.

\bibitem{HRZ2019}S. Herr, M. R\"ockner and D. Zhang,
Scattering for stochastic nonlinear Schr\"odinger equations,
\emph{Comm. Math. Phys.} \textbf{368} (2019), no.~2, 843--884.

\bibitem{HTT2011}S. Herr, D. Tataru and N. Tzvetkov,
Global well-posedness of the energy-critical nonlinear Schr\"odinger equation with small initial data in $H^1(\mathbb{T}^3)$,
\emph{Duke Math. J.} \textbf{159} (2011), no.~2, 329--349.

\bibitem{Hornung2018}F. Hornung,
The nonlinear stochastic Schr\"odinger equation via stochastic Strichartz estimates,
\emph{J. Evol. Equ.} \textbf{18} (2018), no.~3, 1085--1114.

\bibitem{IP2012}A. D. Ionescu and B. Pausader,
The energy-critical defocusing NLS on $\mathbb{T}^3$,
\emph{Duke Math. J.} \textbf{161} (2012), no.~8, 1581--1612.

\bibitem{KeelTao1998}M. Keel and T. Tao,
Endpoint Strichartz estimates,
\emph{Amer. J. Math.} \textbf{120} (1998), no.~5, 955--980.

\bibitem{KV2016}R. Killip and M. Vi\c{s}an,
Scale invariant Strichartz estimates on tori and applications,
\emph{Math. Res. Lett.} \textbf{23} (2016), no.~2, 445--472.

\bibitem{Kwak2026}B. Kwak,
Global well-posedness of the energy-critical nonlinear Schr\"odinger equations on $\mathbb{T}^d$,
\emph{Ann. PDE} \textbf{12} (2026), Paper No.~19.

\bibitem{LOT2025}G. Li, M. Okamoto and L. Tao,
Global well-posedness of the energy-critical stochastic nonlinear Schr\"odinger equation on the three-dimensional torus,
preprint, arXiv:2407.15108v2 (2025; first posted 2024).

\bibitem{LXYZ2026}B. Liu, F. Xu, B. Yang and L. Zhang,
Global well-posedness and scattering for energy-critical Schr\"odinger equations with conservative Marcus noise,
preprint, arXiv:2609.37534 (2026).

\bibitem{MR2026}A. Millet and S. Roudenko,
Well-posedness of the focusing stochastic nonlinear Schr\"odinger equation: $L^2$-critical and supercritical cases,
\emph{Stoch. Partial Differ. Equ. Anal. Comput.} (2026), published online 23 September 2026.

\bibitem{OO2020}T. Oh and M. Okamoto,
On the stochastic nonlinear Schr\"odinger equations at critical regularities,
\emph{Stoch. Partial Differ. Equ. Anal. Comput.} \textbf{8} (2020), 869--894.

\bibitem{PeszatZabczyk2007}S. Peszat and J. Zabczyk,
\emph{Stochastic Partial Differential Equations with L\'evy Noise: An Evolution Equation Approach},
Encyclopedia of Mathematics and its Applications, vol.~113,
Cambridge University Press, Cambridge, 2007.

\bibitem{Strunk2015}N. Strunk,
Global well-posedness of the energy-critical defocusing NLS on rectangular tori in three dimensions,
\emph{Differential Integral Equations} \textbf{28} (2015), nos.~11--12, 1069--1084.

\bibitem{Tao2006}T. Tao,
\emph{Nonlinear Dispersive Equations: Local and Global Analysis},
CBMS Regional Conference Series in Mathematics, vol.~106,
American Mathematical Society, Providence, RI, 2006.

\bibitem{WZZ2023}J. Wang, J. Zhai and J. Zhu,
The stochastic nonlinear Schr\"odinger equations driven by pure jump noise,
\emph{Statist. Probab. Lett.} \textbf{197} (2023), Paper No.~109810.

\bibitem{Zhang2022}D. Zhang,
Strichartz and local smoothing estimates for stochastic dispersive equations with linear multiplicative noise,
\emph{SIAM J. Math. Anal.} \textbf{54} (2022), no.~6, 5981--6017.

\bibitem{Zhang2023}D. Zhang,
Stochastic nonlinear Schr\"odinger equations in the defocusing mass and energy critical cases,
\emph{Ann. Appl. Probab.} \textbf{33} (2023), no.~5, 3652--3705.

\bibitem{ZLZ2026}J. Zhu, W. Liu and J. Zhai,
Large deviation principles for stochastic nonlinear Schr\"odinger equations driven by L\'evy noise,
\emph{J. Funct. Anal.} \textbf{290} (2026), no.~9, Paper No.~111377.

\end{thebibliography}
\end{document}